\def\ge{\geqslant}
\def\le{\leqslant}
\def\a{\alpha}
\def\D{\Delta}
\def\o{\omega}
\def\s{\sigma}
\def\t{\tau}
\def\l{\lambda}
\def\i{^{-1}}
\def\ik {^{(k)}}
\def\<{\langle}
\def\>{\rangle}
\newcommand{\fkp}{\ensuremath{\mathfrak{p}}\xspace}
\newcommand{\fkO}{\ensuremath{\mathfrak{O}}\xspace}
\newcommand{\BF}{\ensuremath{\mathbb {F}}\xspace}
\newcommand{{\BG}}{\ensuremath{\mathbb {G}}\xspace}
\newcommand{{\BK}}{\ensuremath{\mathbb {K}}\xspace}
\newcommand{\BN}{\ensuremath{\mathbb {N}}\xspace}
\newcommand{\BQ}{\ensuremath{\mathbb {Q}}\xspace}
\newcommand{\BR}{\ensuremath{\mathbb {R}}\xspace}
\newcommand{\BS}{\ensuremath{\mathbb {S}}\xspace}
\newcommand{\BZ}{\ensuremath{\mathbb {Z}}\xspace}
\newcommand{\CA}{\ensuremath{\mathcal {A}}\xspace}
\newcommand{\CB}{\ensuremath{\mathcal {B}}\xspace}
\newcommand{\CF}{\ensuremath{\mathcal {F}}\xspace}
\newcommand{\CH}{\ensuremath{\mathcal {H}}\xspace}
\newcommand{\CI}{\ensuremath{\mathcal {I}}\xspace}
\newcommand{\CT}{\ensuremath{\mathcal {T}}\xspace}
\newcommand{\CX}{\ensuremath{\mathcal {X}}\xspace}
\newcommand{\CY}{\ensuremath{\mathcal {Y}}\xspace}
\newcommand{\tF}{{\widetilde{F}}}
\newcommand{\tW}{\widetilde{W}}
\newcommand{\ta}{{\widetilde{a}}}
\newcommand{\Ad}{{\mathrm{Ad}}}
\newcommand{\ad}{{\mathrm{ad}}}
\DeclareMathOperator{\diag}{diag}
\DeclareMathOperator{\Gal}{Gal}
\newcommand{\GL}{\mathrm{GL}}
\DeclareMathOperator{\Ker}{Ker}
\DeclareMathOperator{\Nm}{Nm}
\newcommand{\PGL}{{\mathrm{PGL}}}
\DeclareMathOperator{\Res}{Res}
\newcommand{\Sc}{{\mathrm{sc}}}
\newcommand{\U}{\mathrm{U}}
\def\tW{\tilde W}
\newtheorem{theorem}{Theorem}
\newtheorem{proposition}[theorem]{Proposition}
\newtheorem{lemma}[theorem]{Lemma}
\newtheorem{corollary}[theorem]{Corollary}
\theoremstyle{definition}
\newtheorem{definition}[theorem]{Definition}
\newtheorem{remark}[theorem]{Remark}
\def\af{\text{af}}
\def\fin{\text{fin}}
\def\der{\text{der}}
\def\kk{\mathbf k}
\numberwithin{equation}{section}
\numberwithin{theorem}{section}
\renewcommand{\to}{%
   \ifbool{@display}{\longrightarrow}{\rightarrow}%
   }
\let\shortmapsto\mapsto
\renewcommand{\mapsto}{%
   \ifbool{@display}{\longmapsto}{\shortmapsto}%
   }
\newlength{\olen}
\newlength{\ulen}
\newlength{\xlen}
\newcommand{\xra}[2][]{%
   \ifbool{@display}%
      {\settowidth{\olen}{$\overset{#2}{\longrightarrow}$}%
       \settowidth{\ulen}{$\underset{#1}{\longrightarrow}$}%
       \settowidth{\xlen}{$\xrightarrow[#1]{#2}$}%
       \ifdimgreater{\olen}{\xlen}%
          {\underset{#1}{\overset{#2}{\longrightarrow}}}%
          {\ifdimgreater{\ulen}{\xlen}%
             {\underset{#1}{\overset{#2}{\longrightarrow}}}
             {\xrightarrow[#1]{#2}}}}%
      {\xrightarrow[#1]{#2}}
   }
\newcommand{\xyra}[2][]{%
   \settowidth{\xlen}{$\xrightarrow[#1]{#2}$}%
   \ifbool{@display}%
      {\settowidth{\olen}{$\overset{#2}{\longrightarrow}$}%
       \settowidth{\ulen}{$\underset{#1}{\longrightarrow}$}%
       \ifdimgreater{\olen}{\xlen}%
          {\mathrel{\xymatrix@M=.12ex@C=3.2ex{\ar[r]^-{#2}_-{#1} &}}}%
          {\ifdimgreater{\ulen}{\xlen}%
             {\mathrel{\xymatrix@M=.12ex@C=3.2ex{\ar[r]^-{#2}_-{#1} &}}}
             {\mathrel{\xymatrix@M=.12ex@C=\the\xlen{\ar[r]^-{#2}_-{#1} &}}}}}%
      {\mathrel{\xymatrix@M=.12ex@C=\the\xlen{\ar[r]^-{#2}_-{#1} &}}}%
   }
\newcommand{\xla}[2][]{%
   \ifbool{@display}%
      {\settowidth{\olen}{$\overset{#2}{\longleftarrow}$}%
       \settowidth{\ulen}{$\underset{#1}{\longleftarrow}$}%
       \settowidth{\xlen}{$\xleftarrow[#1]{#2}$}%
       \ifdimgreater{\olen}{\xlen}%
          {\underset{#1}{\overset{#2}{\longleftarrow}}}%
          {\ifdimgreater{\ulen}{\xlen}%
             {\underset{#1}{\overset{#2}{\longleftarrow}}}
             {\xleftarrow[#1]{#2}}}}%
      {\xleftarrow[#1]{#2}}
   }
\newcommand{\isoarrow}{%
   \ifbool{@display}{\overset{\sim}{\longrightarrow}}{\xrightarrow\sim}%
   }
 \newcommand{\bF}{{\breve{F}}}
\newcommand{\bW}{{\breve{W}}}
\newcommand{\bl}{{\breve{l}}}
\newcommand{\bb}{{\breve{b}}}
\newcommand{\bI}{{\breve{I}}}
\newcommand{\bP}{{\breve{P}}}
\newcommand{\bT}{{\breve{T}}}
\newcommand{\HI}{\mathbbm{\hat 1}}
\newcommand{\ba}{{\breve{a}}}
\newcommand{\bc}{{\breve{c}}}
\newcommand{\bs}{{\breve{s}}}
\newcommand{\by}{{\breve{y}}}
\newcommand{\bz}{{\breve{z}}}
\newcommand{\bzg}{{\breve{z}_0}}
\newcommand{\bw}{{\breve{w}}}
\newcommand{\bv}{{\breve{v}}}
\newcommand{\bal}{{\breve{\mathbcal{a}}}}
\newcommand{\bBS}{\breve{\BS}}
\newcommand{\btau}{{\breve{\tau}}}
\newcommand{\bmu}{{\breve{\mu}}}
\newcommand{\bnu}{{\breve{\nu}}}
\newcommand{\be}{{\breve{\eta}}}
\newcommand{\bea}{{\be_{\ad}}}
\newcommand{\blambda}{{\breve{\lambda}}}
\newcommand{\nua}{{\bnu_{\ad}}}
\newcommand{\nuag}{{\bnu_{\ad,0}}}
\newcommand{\al}{{{\mathbcal{a}}}}
\begin{document}

\title[]{Tits groups of Iwahori-Weyl groups and presentations of Hecke algebras}
\author{Radhika Ganapathy}
\address{Department of Mathematics, Indian Institute of Science, Bengaluru, Karnataka  - 560012, India.}
\email{radhikag@iisc.ac.in}

\author[Xuhua He]{Xuhua He}
\address{The Institute of Mathematical Sciences and Department of Mathematics, The Chinese University of Hong Kong, Shatin, N.T., Hong Kong SAR, China}
\email{xuhuahe@math.cuhk.edu.hk}

\keywords{$p$-adic groups, Hecke algebras, Tits groups}
\subjclass[2010]{22E70, 20C08}


\begin{abstract}
Let $G$ be a connected reductive group over a non-archimedean local field $F$ and $I$ be an Iwahori subgroup of $G(F)$. Let $I_n$ is the $n$-th Moy-Prasad filtration subgroup of $I$. The purpose of this paper is two-fold: to give some nice presentations of the Hecke algebra of connected, reductive groups with $I_n$-level structure; and to introduce the Tits group of the Iwahori-Weyl group of groups $G$ that split over an unramified extension of $F$. 

The first main result of this paper is a presentation of the Hecke algebra $\CH(G(F),I_n)$, generalizing the previous work of Iwahori-Matsumoto on the affine Hecke algebras. For split $GL_n$, Howe gave a refined presentation of the Hecke algebra $\CH(G(F),I_n)$. To generalize such a refined presentation to other groups requires the existence of some nice lifting of the Iwahori-Weyl group $W$ to  $G(F)$. The study of a certain nice lifting of $W$ is the second main motivation of this paper, which we discuss below. 

In 1966, Tits introduced a certain subgroup of $G(\kk)$, which is an extension of $W$ by an elementary abelian $2$-group. This group is called the Tits group and provides a nice lifting of the elements in the finite Weyl group. The ``Tits group'' $\CT$ for the Iwahori-Weyl group $W$ is a certain subgroup of $G(F)$, which is an extension of the Iwahori-Weyl group $W$ by an elementary abelian $2$-group. The second main result of this paper is a construction of Tits group $\CT$ for $W$ when $G$ splits over an unramified extension of $F$.  As a consequence, we generalize Howe's presentation to such groups. We also show that when $G$ is ramified over $F$, such a group $\CT$ of $W$ may not exist. 
\end{abstract}

\maketitle

\bigskip

\section{Introduction}

\subsection{Presentations of Hecke algebras}

Let $G$ be a connected reductive group over a non-archimedean local field $F$. Let $I$ be an Iwahori subgroup of $G(F)$ and $W$ be the Iwahori-Weyl group of $G(F)$. Then $G(F)=\sqcup_{w \in W} I w I$. The group $W$ is a quasi-Coxeter group, namely, it is a semidirect product of an affine Weyl group $W_{\af}$ with a group $\Omega$ of length-zero elements. The Iwahori-Hecke algebra $\CH_0=\CH(G(F), I)$ is the $\BZ$-algebra of the compactly supported, $I$-biinvariant functions on $G(F)$. The Iwahori-Matsumoto presentation of $\CH_0$ reflects the quasi-Coxeter group structure of $W$: the generators of $\CH_0$ are the characteristic functions $\mathbbm{1}_{I w I}$, where $w$ runs over elements in $W$, and the relations are given by multiplications of the characteristic functions via the condition on the length functions of $W$. See Theorem \ref{prop:aff} for the precise statement.

The representations of $G(F)$ which are generated by the Iwahori-fixed vectors gives to the representations of the Iwahori-Hecke algebra $\CH_0$. Let $n \in \BN$ and $I_n$ be the $n$-th congruence subgroup of $I$. Let $\CH_n=\CH(G(F), I_n)$ be the $\BZ$-algebra of the compactly supported, $I_n$-biinvariant functions on $G(F)$. It plays a role in the study of representations of $G(F)$ with deeper level structure. 

One main purpose of this paper is to establish some nice presentations of $\CH_n$. The first main result is the generalization of the Iwahori-Matsumoto presentation to $\CH_n$: the generators are the characteristic functions on the $I_n$-double cosets on $G(F)$ and the multiplications of the characteristic functions are given via the conditions on the length function of $W$. We refer to Theorem \ref{mult} for the precise statement. As a consequence, we show that the algebra $\CH_n$ is finitely generated. 

In \cite{How85}, Howe discovered a nice presentation of $\CH_n$ when $G=GL_n(F)$. Here the generators are the characteristic functions $\mathbbm{1}_{g I_n}$ for $g \in I/I_n$ and $\mathbbm{1}_{I_n m(w) I_n}$, where $w$ runs over elements of $W$ of length $0$ and $1$, and $m(w)$ is a nice representative of $w$ in $G(F)$. This presentation is a refinement of the Iwahori-Matsumoto presentation and has some nice applications to the representation theory of $p$-adic groups. Howe's presentation was later generalized by the first-named author to split groups. We observed that such refined presentation requires the existence of the nice lifting of the Iwahori-Weyl group $W$ to $G(F)$. Such a lifting, which we introduce in \S\ref{tits}, is motivated by Tits work on finite Weyl groups. We call such a lifting the {\it Tits group} of the Iwahori-Weyl group $W$ and call the refined presentation of $\CH_n$ the Howe-Tits presentation. In Theorem \ref{thm:howe-tits}, we show that if the Tits group for $W$ exists, then the algebra $\CH_n$ admits the Howe-Tits presentation. 

\subsection{Tits groups of the finite Weyl groups and Iwahori-Weyl groups}\label{sec:intro-1} Now we come to the second main purpose of this paper: the study of the Tits groups. 

We first make a short digression and discuss Tits groups of finite Weyl groups. Let $G$ be a connected reductive group split over a field $\mathfrak{F}$ and $W_0$ be its absolute Weyl group. Tits in \cite{Ti} introduced the Tits group $\CT$ of $W_0$. It is a subgroup of $G(\mathfrak{F})$, which is an extension of $W_0$ by $T_2$, where $T_2$ is the elementary abelian subgroup generated by $\a^\vee(-1)$, where $\a$ runs over all the roots in $G$. Moreover, for any $w \in W_0$, there exists a nice lifting $n_w \in \CT$. These liftings have nice properties: 

\begin{enumerate}
   
    \item $n_{s_\a}^2=\a^\vee(-1)$ for any simple root $\a$. 
    
    \item The set $\{n_s\}$ for simple reflections $s$ satisfies the Coxeter relations, i.e., for any simple reflections $s$ and $s'$, we have $$n_{s} n_{s'} \cdots=n_{s'} n_s \cdots,$$ where each side of the expression above has $k(s, s')$ factors. Here $k(s, s')$ is the order of $s s'$.
\end{enumerate}

We refer to the recent work of Reeder, Levy, Yu and Gross \cite{RLYG}, Adams and the second-named author \cite{AH17} and Rostami \cite{Ro} for some further study of the elements $n_w$ and its applications to supercuspidal representations of $p$-adic groups. 

Now let us come back to the group $G(F)$. Our second main result of this paper is the construction of a Tits group $\CT$ of the Iwahori-Weyl group of a connected, reductive group $G$ that is $\bF$-split. We establish in Theorem \ref{thm:tits-F} that 

\begin{theorem}
We have the short exact sequence $$1 \to S_2 \to \CT \to W \to 1.$$

Moreover, for any $w \in W$, there exists a lifting $n_w \in \CT$ such that 

\begin{itemize}
    \item For any affine simple reflection $s_a$, $n_{s_a}^2=b^\vee(-1)$ where $b$ is the gradient of $a$.

    \item We have $n_w=n_{s_{i_1}} n_{s_{i_2}} \cdots n_{s_{i_n}} n_\t$ for any reduced expression $w=s_{i_1} s_{i_2} \cdots s_{i_n} n_\t$, where $\t \in \Omega$ and $s_{i_1}, \ldots, s_{i_n}$ are simple reflections. 
\end{itemize}
\end{theorem}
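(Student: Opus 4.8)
The plan is to reduce everything to rank-one and rank-two computations, in the spirit of Tits' original construction for finite Weyl groups, and then to descend from $\bF$ to $F$; throughout I use the decomposition $W=W_{\af}\rtimes\Omega$. First I would treat the case where $G$ is $F$-split. Fix a Chevalley pinning, giving for each root $b$ of $G$ the root subgroup $U_b$ with parametrization $x_b$ and the associated homomorphism $\ph_b\colon\SL_2\to G$. For an affine simple reflection $s_a$, let $b$ be its gradient and $r_a\in\{0,1\}$ its level, so that $r_a=0$ exactly when $b$ is a finite simple root and $r_a=1$ for the remaining affine simple roots, whose gradient is $-\theta$ for a highest root $\theta$; set
\[
n_{s_a}:=x_b(\varpi^{-r_a})\,x_{-b}(-\varpi^{r_a})\,x_b(\varpi^{-r_a})=\ph_b\begin{pmatrix}0&\varpi^{-r_a}\\-\varpi^{r_a}&0\end{pmatrix},
\]
so that equivalently $n_{s_a}=b^\vee(\varpi^{-r_a})\,\dot s_b$, where $\dot s_b$ is Tits' canonical lift of $s_b$ (and $n_{s_a}=\dot s_b$ when $r_a=0$). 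A one-line $\SL_2$-computation gives $n_{s_a}^2=\ph_b(-\id)=b^\vee(-1)$, which is the first bullet. For $\tau\in\Omega$, write $\tau=(\mu_\tau,\bar w_\tau)\in X_*(T)\rtimes W_0=W$ and set $n_\tau:=\varpi^{\mu_\tau}\dot{\bar w}_\tau$ (the finite lift $\dot{\bar w}_\tau$ being well defined by Tits); for $w\in W$ with reduced expression $w=s_{i_1}\cdots s_{i_n}\tau$, $\tau\in\Omega$, put $n_w:=n_{s_{i_1}}\cdots n_{s_{i_n}}n_\tau$.

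The heart of the argument is to show that $n_w$ is independent of the reduced expression, equivalently that the $n_{s_a}$ satisfy the braid relations $n_{s_a}n_{s_{a'}}\cdots=n_{s_{a'}}n_{s_a}\cdots$ (each side with $m:=m_{a,a'}$ factors) whenever $m<\infty$. As in Tits' finite case, such an identity is an equality between two elements of $H:=\langle T,\,U_{\pm b},\,U_{\pm b'}\rangle$, where $b,b'$ are the gradients of $a,a'$; this $H$ is an $F$-split reductive group of semisimple rank two, its root system $\Psi:=(\BZ b+\BZ b')\cap\Phi$ has type $A_1\times A_1,\,A_2,\,B_2$, or $G_2$ according as $m=2,3,4,6$, and $\{b,b'\}$, suitably signed, is a base of $\Psi$. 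Writing each factor as $n_{s_c}=c^\vee(\varpi^{-r_c})\dot s_c$ and moving all torus elements to the left, the braid identity splits into the braid identity for the finite canonical lifts $\dot s_b,\dot s_{b'}$ inside $H$ (which is Tits' theorem, as $\{b,b'\}$ is a base of $\Psi$) together with an identity between two products of cocharacters $\delta^\vee(\varpi^{\pm1})$, $\delta\in\Psi^+$; both products run over the full positive coroot system of $\Psi$, and using the coroot relations in $\Psi$ one checks they agree. (Equivalently, one first conjugates $n_{s_a},n_{s_{a'}}$ by a suitable element of $T(\bF)$ to reduce to the case $r_a=r_{a'}=0$, which is literally the finite case.) Pairs with $m=\infty$ impose no relation. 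With the braid relations in hand, $n_w$ is well defined; the finite subgroup $S_2:=\langle b^\vee(-1):b\in\Phi\rangle$ is normalized by every $n_{s_a}$ and every $n_\tau$ (since $w\cdot b^\vee$ is always $\pm$ a coroot), the relations $n_{s_a}^2=b^\vee(-1)$, $n_\tau n_{s_a}n_\tau^{-1}\equiv n_{s_{\tau(a)}}$, and $n_\tau n_{\tau'}\equiv n_{\tau\tau'}$ hold modulo $S_2$, and a routine rewriting argument — every element of $\CT:=\langle n_{s_a},n_\tau\rangle$ is of the form $n_w z$ with $z\in S_2$ — identifies $\ker(\CT\to W)$ with $S_2$, yielding $1\to S_2\to\CT\to W\to1$.

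It remains to pass from $F$-split to $\bF$-split $G$ by Frobenius descent. Choosing a $\s$-stable Iwahori subgroup and a $\s$-stable pinning of $G_{\bF}$ makes the construction above, applied to the $\bF$-split group $G_{\bF}$, equivariant for $\s=\Frob$; one then defines $\CT$ to be the group of $\s$-fixed points of the resulting group over $\bF$, so that $\CT\subseteq G(\bF)^{\s}=G(F)$. An $F$-affine simple reflection corresponds to a $\s$-orbit $\mathcal{O}$ of $\bF$-affine simple reflections: when the members of $\mathcal{O}$ pairwise commute one takes $n_{s_a}:=\prod_{a'\in\mathcal{O}}n_{s_{a'}}$, whose square is $\prod_{a'\in\mathcal{O}}b'^\vee(-1)=b^\vee(-1)$ for the relative coroot $b^\vee=\sum_{\mathcal{O}}b'^\vee$, and otherwise $n_{s_a}$ is taken to be the canonical lift of the longest element of the finite parabolic generated by $\mathcal{O}$; the braid relations and the exact sequence then descend from the split $\bF$-case. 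I expect this last step to be the main obstacle: one must classify the $\s$-orbit patterns that occur on the affine Dynkin diagram and check that the required relations survive the folding, and it is exactly here that the hypothesis that $G$ splits over an \emph{unramified} extension is used — in the ramified case certain folded configurations obstruct the existence of $\CT$, which is what the final assertion of the introduction records.
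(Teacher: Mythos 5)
Your construction over the split field is essentially the same as the paper's: an affine pinning with $n_{s_a}=x_a(1)x_{-a}(1)x_a(1)$, which in $\SL_2$-terms is $b^\vee(\varpi^{-r_a})\dot s_b$. Your proof of the braid relations (write each factor as a torus element times the finite lift, push all torus factors to the left, and compare) is a perfectly valid alternative to the paper's appeal to the ``circular order'' statement in \cite[Proposition 6.1.8]{BT1}; your parenthetical reduction by conjugation to the finite case also works, provided the conjugating element is allowed to live in $T(\bar F)$ rather than $T(\bF)$ (one takes $t=(2\omega^\vee_b)(\varpi^{1/2})$ for $\omega^\vee_b$ the rank-two fundamental coweight, which fixes $\dot s_{b'}$ and rescales $n_{s_a}$). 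The exact-sequence argument via the deletion condition is also the same. So the $\bF$-split step matches Proposition~\ref{prop:tits}.

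The descent step is where your proposal has two genuine gaps. First, you propose to ``choose a $\sigma$-stable pinning of $G_\bF$.'' This is available only when $G$ is quasi-split over $F$: a $\sigma$-stable pinning is the same as a pinning defined over $F$, which forces $G$ to be quasi-split. The theorem, however, covers every $F$-form of a $\bF$-split group, including non-quasi-split inner twists such as inner forms of $\GL_n$. The paper handles this by constructing, for each class in $H^1(\langle\sigma\rangle,G_{\ad}(\bF))\cong(\Omega_{\bal,\ad})_\sigma$, a twisted Frobenius $\sigma^*=\Ad(g_\bnu)\circ\sigma$ (\S\ref{Conssigma*}), and then proving in Proposition~\ref{prop:power-s} that the affine pinning constructed in Proposition~\ref{sigmarepsWaf} remains $\sigma^*$-compatible; this step already requires the identities in Corollary~\ref{cor:FTG}. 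Your sketch has no counterpart to this.

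Second, and more importantly, you do not address the descent of the length-zero elements. For $\btau\in\Omega_{\bal}^{\sigma^*}$ one needs a lift $m(\btau)\in\breve\CT$ with $\sigma^*(m(\btau))=m(\btau)$; a priori one only gets $\sigma^*(m(\btau))=c\,m(\btau)$ for some unknown $c\in\breve S_2$, and showing $c=1$ is nontrivial. The introduction explicitly flags this as ``the most difficult part of the argument,'' and the paper devotes \S\ref{constaubs} (Proposition~\ref{prop:m-t}), together with the finite Tits-group identities in Corollary~\ref{cor:FTG} and a case analysis over the types $A_{2n+1}$, $D_n$, $E_6$, $E_7$, to establishing it. Your proposal defines $n_\tau=\varpi^{\mu_\tau}\dot{\bar w}_\tau$ but gives no argument that this is $\sigma^*$-fixed. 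Finally, your closing remark misidentifies where the unramifiedness hypothesis is used: the obstruction for ramified $G$ already appears over $\bF$ (the $\U_6/\BQ_2$ example in \S\ref{sec:example} shows no $\breve\CT$ exists), not in the folding from $\bF$ to $F$. Also, in your verification of $m(s_a)^2=b^\vee(-1)$ you only treat the $\sigma^*$-orbit patterns of type $A_1^k$; the paper's case analysis shows an orbit of type $A_2$ can also occur, for which $m(s_a)^2=1$ and one must check separately that $b^\vee(-1)=1$.
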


We refer to \S \ref{sec:TitsF} for the definition of the elementary abelian $2$-group $S_2$. 

As a consequence, we have the Howe-Tits presentation of $\CH_n$ for groups that are $\bF$-split.

It is also worth pointing out that for ramified groups, such a $\CT$ may not exist. We give an example in \S \ref{sec:example}. 

\subsection{The difficulty and strategy}  In this subsection, we describe the strategy that goes into the construction of the Tits group of the Iwahori-Weyl group $W$ of $G$ over $F$. 

The Tits group of the finite absolute Weyl group is constructed via a ``pinning'' of $G(\mathfrak{F})$. Roughly speaking, a pinning gives a collection of isomorphisms from additive group $\BG_a$ to the simple root subgroups of $G$. Given a pinning, one may define the lifting of simple reflections $n_s$ and check that the conditions (1) \& (2) in \S\ref{sec:intro-1} are satisfied. The Tits group of the finite Weyl group is generated by the $n_s$ where $s$ varies over the finite simple reflections.

When $G_F$ is not quasi-split, the group need not admit a ``pinning" analogous to the one discussed above, and hence there is no natural choice of representatives for the elements of the relative or affine Weyl group over $F$. 

We construct the Tits group of the Iwahori-Weyl group of $G$ over $F$ in two steps.  We first construct the Tits group of Iwahori-Weyl group over $\bF$, where  $\bF$ is the completion of the maximal unramified extension of $F$ contained in a fixed separable closure of $F$. Next, we ``descend" this construction down to $F$. The advantage of this approach is that the group $G_\bF$ is always quasi-split and admits a nice system of pinnings analogous to the one discussed in the preceding paragraph. 
 
We now explain these two steps in more detail. 
\begin{enumerate}
\item Let $G$ be a connected, reductive group over $F$ such that $G_\bF$ is $\bF$-split and let $T$ be a maximal $F$-torus in $G$ that is $\bF$-split. Let $\bal$ be a $\sigma$-stable alcove in the apartment $\CA(T, \bF)$ and let $\bBS$ be the set of affine simple reflections through the walls of $\bal$. To choose representatives of the elements of $\bBS$, we introduce an affine pinning; for each affine simple root $\ba$ with gradient $\bb$, this is a homomorphism $x_\ba: \BG_a \rightarrow U_\bb$ such that the image of $m(s_{\ba}) = x_\ba(1) x_{-\ba}(1) x_\ba(1)$ in the affine Weyl group is $s_\ba$. We then show that this set of representatives satisfy Coxeter relations and furthermore,  $m(s_{\ba})^2 = \bb^\vee(-1)$ for each affine simple reflection $\ba$. We show that the group generated by  $\{\breve\lambda(\varpi_F)\;|\; \breve\lambda \in X_*(T)\}$, the $\{m(\bs)\;|\; \bs \in \bBS\}$, and the group $\breve S_2 = \langle \bb^\vee(-1)\;|\; \bb \in \Phi(G,T)\rangle$ yields a Tits group of the Iwahori-Weyl group over $\bF$. We also include an example here of a wildly ramified unitary group over $\bF$ for which the Tits group of the Iwahori-Weyl group over $\bF$ does not exist. This is done in \S \ref{sec:TGbF}. 
\item We now explain the descent step. Let $\sigma$ denote the Frobenius morphism on $G_\bF$ such that the $F$-structure it yields is $G$. Let $\al = \bal^\sigma$ and let $\BS$ be the set of reflections through the walls of $\al$. Then $\BS$ generates the Coxeter group $W_{\af}$ and $W = W_\af \rtimes \Omega_{\al}$ where $\Omega_\al$ is the stabilizer of the alcove $\al$. By the work of Lusztig \cite{Lusz} it is known that the elements of $\BS$ correspond to certain ``nice" $\sigma$-orbits in $\bBS$.  We construct an affine pinning over $\bF$ such that the set of representatives $\{m(\bs)\;|\; \bs \in \CX\}$ obtained using this pinning is $\sigma$-stable for each of these nice $\sigma$-orbits $\CX$. This descent argument yields a set of representatives in $G(F)$ for the elements of $\BS$ that satisfy Coxeter relations. This is done in \S \ref{HDSS}. 

Let $\breve \CT$ be as in (1), but with the representatives $\{m(\bs)\;|\; \bs \in \bBS\}$ as in the preceding paragraph. Then $\breve\CT$ is $\sigma$-stable. We need to show that $\breve\CT^{\sigma} \subset G(F)$ is a Tits group of $W$ over $F$. The most difficult part of the argument is to carry out the descent step for the elements of $\Omega_{\bal}$; since $\Omega_\al = \Omega_{\bal}^\sigma$, we need to show that for $\breve\tau \in \Omega_\bal^\sigma$, there is a representative $m(\btau)$ of $\btau$ in $\breve\CT$ with $\sigma(m(\btau)) = m(\btau)$. The results from our previous construction allow us to choose a representative $m(\btau)$ of $\btau$ in $\breve\CT$ with the property that $\sigma(m(\btau)) = cm(\btau)$ for a suitable $c \in \breve S_2$, and a priori, we do not have any control over this element $c \in \breve S_2$. 
We carry out this step by constructing a Frobenius morphism $\sigma$ associated to the $F$-isomorphism class of $G$, and then construct the Tits group $\breve\CT \subset G(\bF)$ of $\bW$ over $\bF$ so that $\breve\CT^\sigma \subset G(\bF)^\sigma$ is a Tits group of $W$ over $F$.  This is done in \S\ref{FMIF} - \S\ref{constaubs} and some parts of the argument are based on a case-by-case analysis. 
\end{enumerate}

\smallskip

\subsection*{Acknowledgments:} The authors would like to thank T. Haines, G. Lusztig and M.-F. Vigneras for useful discussions. R.G.~ would like to thank the Infosys foundation for their support through the Young Investigator award. X.H.~is partially supported by a start-up grant and by funds connected with Choh-Ming Chair at CUHK, and by Hong Kong RGC grant 14300220.

\section{Preliminaries}

\subsection{Notation}\label{notation} Let $F$ be a non-archimedean local field with $\fkO_F$ its ring of integers, $\fkp_F$ its maximal ideal, $\varpi_F$ a uniformizer, and $\kk=\BF_q$ its residue field. Let $p$ be the characteristic of $\kk$. Let $\bar F$ be the completion of a separable closure of $F$. Let $\bF$ be the completion of the maximal unramified subextension with valuation ring $\fkO_\bF$ and residue field $\bar\kk$. Note that $\varpi_F$ is also a uniformizer of $\bF$. Let $\Gamma = \Gal(\bar F/F)$ and $\Gamma_0=\Gal(\bar F/\bF)$. 

Let $G$ be a connected, reductive group over $F$. By Steinberg's Theorem (see \cite[Theorem 56]{Ste65}), $G_\bF$ is quasi-split. Let $\sigma$ denote the Frobenius action on $G(\bF)$ such that $G(F) = G(\bF)^\sigma$. Let $A$ be a maximal $F$-split torus of $G$ and $S$ be a maximal $\bF$-split $F$-torus of $G$ containing $A$. Let $T = Z_G(S)$. Then $T$ is defined over $F$ and is a maximal $F$-torus of $G$ containing $S$. Let $\tF$ be the field of invariants of the kernel of the representation of $\Gamma_0$ on $X^*(T)$. This extension is Galois over $\bF$. Hence $T$ and $G$ are split over $\tF$. By \cite[Chapter V, \S4, Proposition 7]{Ser79}, there exists a uniformizer $\varpi_\tF$ of $\tF$ with $\Nm_{\tF/\bF}(\varpi_\tF) = \varpi_F$, where $\Nm_{\tF/\bF}$ is the norm map. Fix one such.  

Let $\tilde\Phi(G,T)$ be the set of roots of $T_\tF$ in $G_\tF$. Then the set of relative roots of $S$ in $G_\bF$, denoted by $\breve\Phi(G,S)$, is the set of the restrictions of the elements in $\tilde\Phi(G,T)$ to $S$. Let $\bW_0$ denote the relative Weyl group of $G$ with respect to $S$ and let $W(G,T)$ denote the absolute Weyl group of $G$.

Let $\CB(G, \bF)$ (resp. $\CB(G,F)$) denote the enlarged Bruhat-Tits building of $G(\bF)$ (resp. $G(F)$). Then $\CB(G, \bF)$ carries an action of $\sigma$ and $\CB(G,F)= \CB(G, \bF)^\sigma$. Let $\CA(S, \bF)$ be the apartment in $\CB(G, \bF)$ corresponding to $S$. Let $\bal$ be a $\sigma$-stable alcove in $\CA(S,\bF)$. Let $\bv_0$ be a special vertex contained in the closure of $\bal$. Set $\al = \bal^\sigma$; this is an alcove in the apartment $\CA(A, F)$ (see \cite[\S 5.1]{BT2}).

Let $\breve\Phi_\af(G,S)$ denote the set of affine roots of $G(\bF)$ relative to $S$. Let $V = X_*(S) \otimes_\BZ \BR$. The choice of $\bv_0$ also allows us to identify $\CA(S, \bF)$ with $V$ via $\bv_0 \mapsto 0 \in V$, which we now do. We then view $\bal \subset V$. Let $\breve\Delta \subset \breve\Phi_\af(G,S)$ be the set of affine roots such that the corresponding vanishing hyperplanes form the walls of $\bal$. The Weyl chamber in $V$ that contains $\bal$ then yields a set of simple roots for $\breve\Phi(G,S)$ which we denote as $\breve\Delta_0$. Clearly $\breve\Delta_0 \subset \breve\Delta$.

\subsection{Iwahori-Weyl group over $\breve F$}

Let $\bI$ be the Iwahori subgroup associated to $\bal$. Let $\kappa_{T, \bF}: T(\bF) \rightarrow X_*(T)_{\Gamma_0}$ denote the Kottwitz homomorphism. The map $\kappa_{T, \bF}$ is surjective  and its kernel $T(\bF)_1$ is the unique parahoric subgroup of $T(\bF)$. By \cite[\S 7.2]{Kot97}, we have the following commutative diagram
\begin{equation}\label{KottwitzTorus}
\begin{tikzcd}
T(\tF) \arrow{r}{\kappa_{T,\tF}} \arrow{d}{\Nm_{\tF/\bF}}
&X_*(T) \arrow{d}{pr}\\
 T(\bF)\arrow{r}{\kappa_{T, \bF}} &X_*(T)_{\Gamma_0}.
\end{tikzcd}\end{equation}
Let $\bW = N_G(S)(\bF)/T(\bF)_1$ be the Iwahori-Weyl group of $G(\bF)$ with length function $\bl$. This group fits into an exact sequence
\[1 \rightarrow X_*(T)_{\Gamma_0} \rightarrow \tW \rightarrow \bW_0 \rightarrow 1.\]

Recall that we have chosen a special vertex $\bv_0$. With this, we have a semi-direct product decomposition
\begin{align}\label{decomp1}
    \bW\cong X_*(T)_{\Gamma_0} \rtimes \bW_0.
\end{align}

Let $\bBS = \{s_\ba\;|\; \ba \in \breve\Delta\}$ be the set of simple reflections with respect to the walls of $\bal$. Let $\bBS_0 =\{s_\ba\;|\; \ba \in \breve\Delta_0\}$. Let $\bW_{\af} \subset \bW$ be the Coxeter group generated by $\bBS$. Let $T_{\Sc}, N_{\Sc}$ denote the inverse images of $T\cap G_\der$, resp. $N_
G(S)\cap G_\der$ in $G_\Sc$. Let $S_{\Sc}$ denote the split component of $T_{\Sc}$. Then $\bW_{\af}$ may be identified with the Iwahori-Weyl group of $G_{\Sc}$. It fits into the exact sequence
\begin{align}\label{IWGSDP}
    1 \rightarrow \bW_{\af} \rightarrow \bW \rightarrow X^*(Z(\hat G)^{\Gamma_0}) \rightarrow 1.
\end{align}
Let $\Omega_{\bal}$ be the stabilizer of $\bal$ in $\bW$. Then $\Omega_{\bal}$ maps isomorphically to $X^*(Z(\hat G)^{\Gamma_0})$ and we have a $\sigma$-equivariant semi-direct product decomposition
\[\bW \cong \bW_{\af} \rtimes \Omega_\bal.\]

Let $\bl$ be the length function on $\bW$. Then $\bl(s) = 1$ for all $s \in \bBS$ and $\Omega_\bal$ is the set of elements of length 0 in $\bW$.

\subsection{Iwahori-Weyl group over $F$}\label{sec:WF}
Let $I$ be the Iwahori subgroup of $G(F)$ associated to $\al$. Then $I = \bI^\sigma$. Let $M = Z_G(A)$ and $M(F)_1$ be the unique parahoric subgroup of $M(F)$. We may identify $M(F)_1$  with the kernel of the Kottwitz homomorphism $M(F) \rightarrow X^*(Z(\hat M)^{\Gamma_0})^\sigma$. Let $W= N_G(A)(F)/M(F)_1$ denote the Iwahori-Weyl group of $G(F)$ with length function $l$. 

By \cite[Lemma 1.6]{Ri}, we have a natural isomorphism $W \cong \breve W \,^{\s}$. It is proved in \cite[Proposition 1.11 \& sublemma 1.12]{Ri} that 

(a) for $w, w' \in W$, $\breve \ell(w w')=\breve \ell(w)+\breve \ell(w')$ if and only if $\ell(w w')=\ell(w)+\ell(w')$.

The semi-direct product decomposition of $\bW$ in \eqref{IWGSDP} is $\sigma$-equivariant and yields a decomposition
\[W \cong \bW_{\af}^\sigma \rtimes \Omega_{\bal}^\sigma.\]
Let $W_{\af} = W_{\af}^\sigma$ and let $\BS$ be the set of reflections through the walls of $\al$. Then $(W_\af, \BS)$ is a Coxeter system. The group $\Omega_\al$, which is the stabilizer of the alcove $\al$, is isomorphic to $\Omega_{\bal}^\sigma$ and is the set of length 0 elements is $W$. 

The simple reflections $\BS$ of $W_{\af}$ are certain elements in $\bW_{\af}$. The explicit description is as follows. For any $\s$-orbit $\CX$ of $\bBS$, we denote by $\bW_{\CX}$ the parabolic subgroup of $\bW_{\af}$ generated by the simple reflections in $\CX$. If moreover, $\bW_{\CX}$ is finite, we denote by $\bw_{\CX}$ the longest element in $\bW_{\CX}$. It is proved by Lusztig \cite[Theorem A.8]{Lusz} that there exists a natural bijection $s \mapsto \CX$ from $\BS$ to the set of $\s$-orbits of $\bBS$ with $\bW_{\CX}$ finite such that the element $s \in W_{\af} \subset \bW_{\af}$ equals to $\bw_{\CX}$. 

\subsection{Moy-Prasad filtration subgroups} Let $\bI$ be the Iwahori subgroup of $G(\bF)$ associated to the alcove $\bal$. Recall that we have chosen a special point $\bv_0$ in $\CA(S, \bF)$, using which we have identified $\CA(S, \bF)$ with $V$. Let $(\phi_\ba)_{\ba \in \breve\Phi(G,S)}$ be the corresponding valuation of root datum of $(T, (U_\ba)_{\ba \in \breve\Phi(G,S)})$ (see \cite[\S 6.2]{BT1}). For $\bv \in \bal$, $\ba \in \breve\Phi(G,S)$ and $r \in \BR$, let $U_{\ba}(\bF)_{\bv, r}$ denote the filtration of the root subgroup $U_a$ (see \cite[\S 4.3 - \S 4.6]{BT2}). More precisely,
\[U_{\ba}(\bF)_{\bv, r} = \{u \in U_\ba(\bF)\;|\; \langle \ba, \bv \rangle +\phi_\ba(u) \geq r\}.\]
The subgroup $U_{\ba}(\bF)_{ \bv, 0}$ does not depend on the choice of $\bv \in \bal$ and we may denote it as $ U_{\ba}(\bF)_{ \bal, 0}$. Note that $\bI$ is generated by $T(\bF)_1$ and $U_{\ba}(\bF)_{ \bal, 0}, \ba \in \breve\Phi(G,S)$.

Let $\bI_n$ be the $n$-th Moy-Prasad filtration subgroup of $\bI$. In particular, for $n \ge 1$, $ \bI_n$ is a normal subgroup of $\bI$.  Let $\CT^{NR}$ denote the Neron-Raynaud model of $T$, a group scheme of finite type over $\fkO_F$ with connected geometric fibers such that $\CT^{NR}(\fkO_\bF) = T(\bF)_1$. Let $\breve T_n = \Ker(\CT^{NR}(\fkO_\bF) \rightarrow \CT^{NR}(\fkO_\bF/\fkp_\bF^n))$. Then $\bI_n$ is generated by $\breve T_n$ and $U_{\ba}(\bF)_{\bv_\bal, n}, \ba \in \breve \Phi(G,S)$, where $\bv_\bal$ is the barycenter of $\bal$. 

Let $I_n=\breve I_n ^\s$. Then for $n \ge 1$, $I_n$ is a normal subgroup of $I$.

\subsection{The subgroup $\bP_s$}\label{sec:Ps} Let $s \in \BS$. Let $\CX$ be the $\sigma$-stable orbit in $\bBS$ corresponding to $s$ (see \S \ref{sec:WF}). Let $\breve P_s=\sqcup_{\bw \in \bW_\CX} \breve I \dot \bw \breve I \supset \breve I$ be the parahoric subgroup of $G(\bF)$ associated to $\CX$. This is the parahoric subgroup attached to $\bal_s = \overline{\bal}^{\breve W_\CX}$, where $\overline{\bal}$ is the closure of the alcove $\bal$. Then $\bP_s$ is generated by $T(\bF)_1$ and $U_{\ba}(\bF)_{\bal_s, 0},\; \ba \in \breve \Phi(G,S)$. Let $\bP_{s,n}$ be the $n$-th Moy-Prasad filtration subgroup of $\bP_s$. It is generated by $\bT_n$ and $U_{\ba}(\bF)_{\bv_s, n}, \ba \in \breve \Phi(G,S)$ where  $\bv_{s}$ is the barycenter of $\bal_s$.

\subsection{The Hecke algebra $\CH(G(F), I_n)$}Let $\CH_n = \CH(G(F), I_n)$ be the Hecke algebra of compactly supported, $I_n$-biinvariant $\BZ$-valued functions on $G(F)$. Note that $I_0=I$. The algebra $\CH_0$ is the Iwahori-Hecke algebra.

\section{A Tits group associated to an Iwahori-Weyl group}\label{tits}

\subsection{Tits group associated to an  absolute Weyl groups} In this subsection, we assume that $\mathfrak{F}$ is any field and $G$ is a reductive group split over $\mathfrak{F}$. Let $T$ denote a maximal $F$-split torus in $G$. 

We follow \cite{Ti}. For any root $a$, we denote by $a^\vee$ the corresponding coroot. Let $S_2$ be the elementary abelian two-group generated by $\{a^\vee(-1)\}$ for all roots $a$. Associated to any pinning of $G$, we have the Tits group $\CT_{\text{fin}}$. This is a subgroup of $N_G(T)$, generated by $\{n_s\}$, where $s$ runs over the simple reflections in the absolute Weyl group $W(G, T)$ and $n_s$ is a certain lift of $s$ to $N_G(T)$. 

Below are some properties on the Tits group $\CT_{\text{fin}}$:

\begin{enumerate}
   
    \item $n_{s_a}^2=a^\vee(-1)$ for any simple root $a$. 
    
    \item The set $\{n_s\}$ for simple reflections $s$ satisfies the Coxeter relations, i.e., for any simple reflections $s$ and $s'$, we have $$n_{s} n_{s'} \cdots=n_{s'} n_s \cdots,$$ where each side of the expression above has $k(s, s')$ factors. Here $k(s, s')$ is the order of $s s'$ in $W(G, T)$.
    
     \item The map $n_s \mapsto s$ induces a short exact sequence $$1 \to T_2 \to \CT_{\text{fin}} \to W(G, T) \to 1.$$
    
\end{enumerate}

For any $w \in W(G, T)$, we may define $n_w=n_{s_1} \cdots n_{s_k} \in \CT_{\text{fin}}$, where $s_1 \cdots s_k$ is a reduced expression of $w$. As a consequence of (2), the definition of $n_w$ is independent of the choice of the reduced expression of $w$. We call the liftings $\{n_w\}_{w \in W(G, T)}$ a {\it Tits cross-section} of $W(G, T)$ in $\CT_{\fin}$. 

\subsection{A Tits group of Iwahori-Weyl group over $\breve F$}\label{sec:tits} Motivated by the construction of the Tits group of the absolute Weyl group, we introduce the Tits groups of Iwahori-Weyl groups. 

For each $\bb$ in the relative root system $\breve\Phi(G,S)$, we set 
\[\bb_* =\begin{cases} \bb, & \text{ if } \bb \text{ is reduced}; \\ \bb/2, & \text{  otherwise.} \end{cases}
\]

Note that any element $\bw \in \bW$ can be written as $\bw=\bs_{i_1} \cdots \bs_{i_n} \breve \t$, where $\bs_{i_1}, \cdots, \bs_{i_n} \in \breve \BS$ and $\breve \t \in \Omega_{\bal}$. If $n=\breve \ell(\bw)$, then we say that $\bw=\bs_{i_1} \cdots \bs_{i_n} \breve \t$ is a reduced expression of $\bw$ in $\bW$.

\begin{definition}\label{def:tits}
Let $\breve S_2$ be the elementary abelian two-group generated by $\breve b^\vee(-1)$ for $\breve b \in \breve \Phi(G, S)$. A {\it Tits group} of $\bW$ is a subgroup $\breve \CT$ of $N_G(S)(\breve F)$ such that 
\begin{enumerate}
    \item The natural projection $\breve \phi: N_G(S)(\breve F) \to \bW$ induces a short exact sequence $$1 \to \breve S_2 \to \breve \CT \xrightarrow{\breve \phi} \bW \to 1.$$
    
    \item There exists a {\it Tits cross-section} $\{m(\bw)\}_{\bw \in \bW}$ of $\bW$ in $\breve \CT$ such that 
    
    \begin{enumerate}
        \item for $\ba \in \breve\Delta$, $m(\bs_{\ba})^2 = \bb_*^\vee(-1)$, where $\bb$ is the gradient of $\ba$.
        
        \item for any reduced expression $\bw=\bs_{i_1} \cdots \bs_{i_n} \breve \t$ in $\bW$, we have $m(\bw)=m(\bs_{i_1}) \cdots m(\bs_{i_n}) m(\breve \t)$. 
    \end{enumerate}
\end{enumerate}
\end{definition}

It is easy to see that the condition (2) (b) in Definition \ref{def:tits} is equivalent to 

Condition (2)(b)$^{\dagger}$: $m(\bw \bw')=m(\bw) m(\bw')$ for any $\bw \in \bW_{\af}$ and $\bw' \in \bW$ with $\breve \ell(\bw \bw')=\breve \ell(\bw)+\breve \ell(\bw')$.

Suppose that a Tits group $\breve \CT$ of $\bW$ exists and $\breve \phi: \breve \CT \to \bW$ is the projection map. Let $\breve \CT_{\af}=\breve \phi \i(\breve W_{\af})$. This is the subgroup of $\breve \CT$ generated by $\breve S_2$ and $m(\bw)$ for $w \in \bW_{\af}$. We have the following commutative diagram
\[
\begin{tikzcd}
  1 \arrow[r] & \breve S_2 \arrow[d, equal] \arrow[r] & \breve\CT_{\af} \arrow[d, hook] \arrow[r,"\breve\phi"] & \bW_{\af} \arrow[d, hook] \arrow[r] & 1 \\
  1 \arrow[r] & \breve S_2 \arrow[r] & \breve\CT \arrow[r,"\breve\phi"] & \bW \ar[r] & 1.
\end{tikzcd}
\]

For $\btau \in \Omega_\bal$, any lifting $m'(\btau)$ of $\btau$ in $G(\bF)$ lies in the normalizer of $\bI$, where $\bI$ is the Iwahori subgroup attached to the alcove $\bal$. This a special case of the fact that for $g \in G(\bF)$ and $x \in \CA(S, \bF)$, $g \bP_x g\i = \bP_{g \cdot x}$, where $\bP_x$ is the parahoric subgroup attached to $x$. 

Let us add some comments on $\breve b_*$. In this paper, we will construct the Tits group for connected reductive groups split over $\breve F$. For these groups, there is no difference between $\breve b_*$ and $\breve b$. We expect that Tits groups (in the Definition \ref{def:tits}) exist for tamely ramified groups. Then one needs to use $\breve b_*$ instead of $\breve b$ in condition (2) (a), e.g., for the tamely ramified unitary groups. 

\subsection{Tits groups over $F$}\label{sec:TitsF}
The Tits group of $W$ is defined as follows.  

Note that any element $w \in W$ can be written as $w=s_{i_1} \cdots s_{i_n} \t$, where $s_{i_1}, \cdots, s_{i_n} \in \BS$ and $\t \in \Omega_{\al}$. If $n=\ell(w)$, then we say that $w=s_{i_1} \cdots s_{i_n} \t$ is a reduced expression of $w$ in $W$. 

\begin{definition}\label{def:tits2}
Let $S_2=\breve S_2^\s$. A {\it Tits group} of $W$ is a subgroup $\CT$ of $N_G(A)(F)$ such that 
\begin{enumerate}
    \item The natural projection $\phi: N_G(A)(F) \to W$ induces a short exact sequence $$1 \to S_2 \to \CT \xrightarrow{\phi} W \to 1.$$
    
    \item There exists a {\it Tits cross-section} $\{m(w)\}_{w \in W}$ of $W$ in $\CT$ such that 
    
    \begin{enumerate}
       \item for $a \in \Delta$, $m(s_{a})^2 = b^\vee(-1)$, where $b$ is the gradient of $a$.
        
       \item for any reduced expression $w=s_{i_1} \cdots s_{i_n} \t$ in $W$, we have $m(w)=m(s_{i_1}) \cdots m(s_{i_n}) m(\t)$. 
    \end{enumerate}
\end{enumerate}
\end{definition}

Note that in general, $S_2$ is larger than than the subgroup generated by $b^\vee(-1)$ for $b \in \Phi(G, A)$. 

Suppose that a Tits group $\CT$ of $W$ exists and $\phi: \CT \to W$ is the projection map. Let $\CT_{\af}=\phi \i(W_{\af})$. This is the subgroup of $\CT$ generated by $S_2$ and $m(w)$ for $w \in W$. We have the following commutative diagram
\[
\begin{tikzcd}
  1 \arrow[r] & \breve S_2^\s \arrow[d, equal] \arrow[r] & \CT_{\af} \arrow[d, hook] \arrow[r,"\phi"] & W_{\af} \arrow[d, hook] \arrow[r] & 1 \\
  1 \arrow[r] & \breve S_2^\s \arrow[r] & \CT \arrow[r,"\phi"] & W \ar[r] & 1.
\end{tikzcd}
\]

We would like to point out that unlike Tits groups of absolute Weyl groups for split reductive groups, the Tits groups of $\bW$ and $W$ may not exist in general. See \S \ref{sec:example}. 

\section{Two presentations of the Hecke algebra $\CH(G(F), I_n)$}\label{Bruhat}

We first recall the Iwahori-Matsumoto presentation of the Iwahori-Hecke algebras:

\begin{theorem}\label{prop:aff}
For $w \in W$, let $\dot w$ be any representative of $w$ in $G(\bF)$. The Hecke algebra $\CH_0$ is a free module with basis $\{\mathbbm{1}_{I \dot w I}\}_{w \in W}$ and the multiplication is given by the following formulas:

(1) $\mathbbm{1}_{I \dot w I} \mathbbm{1}_{I \dot w' I}=\mathbbm{1}_{I \dot w \dot w' I}$ if $\ell(w w')=\ell(w)+\ell(w')$.

(2) $\mathbbm{1}_{I \dot s I} \mathbbm{1}_{I \dot w I}=(q^{\breve \ell(s)}-1) \mathbbm{1}_{I \dot w I}+q^{\breve \ell(s)} \mathbbm{1}_{I \dot s \dot w I}$ for $s \in \BS$ and $w \in W$ with $s w<w$. 
\end{theorem}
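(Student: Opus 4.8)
\smallskip
\noindent\emph{Proof strategy.} The plan is to split the statement into a geometric input, furnished by Bruhat--Tits theory, and a formal computation with the convolution product. Normalize the Haar measure $\mu$ on the (unimodular) group $G(F)$ so that $\mu(I)=1$. Then the convolution of two $I$-biinvariant functions reads $(f*g)(x)=\sum_{yI\in G(F)/I}f(y)\,g(y\i x)$, a finite sum, and $\int_{G(F)}(f*g)\,d\mu=\big(\int_{G(F)}f\,d\mu\big)\big(\int_{G(F)}g\,d\mu\big)$ by left-invariance of $\mu$. The Bruhat--Iwahori decomposition $G(F)=\sqcup_{w\in W}I\dot wI$ shows that the functions $\mathbbm{1}_{I\dot wI}$ (which depend only on $w\in W$, not on the chosen lift $\dot w$) have pairwise disjoint supports and $\BZ$-span $\CH_0$, so they form a basis. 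I will moreover use the volume formula
\[
\mu(I\dot wI)=[I\dot wI:I]=q^{\breve\ell(w)}\qquad(w\in W).
\]
For $w\in\BS$ this is Bruhat--Tits theory applied to the reductive quotient of the parahoric $I\sqcup I\dot wI$ over $\BF_q$, combined with Lusztig's description of $\BS$ as the distinguished $\sigma$-orbits in $\bBS$ recalled in \S\ref{sec:WF}; for general $w$ it follows by multiplicativity along a reduced expression and the compatibility of $\ell$ with $\breve\ell$ from \S\ref{sec:WF}.

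With these in hand, the first step is the quadratic relation: for $s\in\BS$, writing $q_s:=q^{\breve\ell(s)}$,
\[
\mathbbm{1}_{I\dot sI}*\mathbbm{1}_{I\dot sI}=(q_s-1)\,\mathbbm{1}_{I\dot sI}+q_s\,\mathbbm{1}_{I}.
\]
Since $(I\dot sI)\i=I\dot sI$ and, by rank-one Bruhat--Tits combinatorics, $I\dot sI\cdot I\dot sI\subseteq I\sqcup I\dot sI$, the left-hand side is $I$-biinvariant and supported on $I\sqcup I\dot sI$, hence of the form $a\mathbbm{1}_I+b\mathbbm{1}_{I\dot sI}$. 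Evaluating at the identity gives $a=\#\{yI:y\in I\dot sI,\ y\i\in I\dot sI\}=[I\dot sI:I]=q_s$; integrating against $\mu$ gives $q_s^2=a+b\,q_s$, whence $b=q_s-1$.

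The second step is the length-additive relation (1). If $\ell(ww')=\ell(w)+\ell(w')$ then also $\breve\ell(ww')=\breve\ell(w)+\breve\ell(w')$ by \S\ref{sec:WF}, and the standard Tits-system identity $I\dot wI\cdot I\dot w'I=I\dot w\dot w'I$ (proved by induction on $\ell(w)$, reducing to the rank-one case) shows $\mathbbm{1}_{I\dot wI}*\mathbbm{1}_{I\dot w'I}=c\,\mathbbm{1}_{I\dot w\dot w'I}$ for some $c\in\BZ_{\ge1}$; integrating, $c\,q^{\breve\ell(ww')}=q^{\breve\ell(w)}q^{\breve\ell(w')}=q^{\breve\ell(ww')}$, so $c=1$. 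Finally, for $s\in\BS$ and $w\in W$ with $sw<w$ one has $\ell(w)=\ell(s)+\ell(sw)$, so by (1), $\mathbbm{1}_{I\dot wI}=\mathbbm{1}_{I\dot sI}*\mathbbm{1}_{I\dot s\dot wI}$; then associativity and the quadratic relation give
\[
\mathbbm{1}_{I\dot sI}*\mathbbm{1}_{I\dot wI}
=\big(\mathbbm{1}_{I\dot sI}*\mathbbm{1}_{I\dot sI}\big)*\mathbbm{1}_{I\dot s\dot wI}
=\big((q_s-1)\mathbbm{1}_{I\dot sI}+q_s\mathbbm{1}_I\big)*\mathbbm{1}_{I\dot s\dot wI}
=(q_s-1)\mathbbm{1}_{I\dot wI}+q_s\mathbbm{1}_{I\dot s\dot wI},
\]
which is (2). (Relations (1) and (2) then determine every product, since each $\mathbbm{1}_{I\dot wI}$ is a product of the $\mathbbm{1}_{I\dot sI}$, $s\in\BS$, with one length-zero factor, via a reduced expression of $w$.)

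The main obstacle is not this convolution bookkeeping but the geometric inputs underpinning it --- above all the volume formula with exponent $\breve\ell(w)$ rather than $\ell(w)$, which is exactly where the possibly non-split structure of $G$ over $F$ enters, through the $\BF_q$-reductive quotients of the parahorics attached to the walls of $\al$ and Lusztig's identification of $\BS$ with distinguished $\sigma$-orbits in $\bBS$. The inclusions $I\dot wI\cdot I\dot w'I\subseteq I\dot w\dot w'I$ (with equality when lengths add) and $I\dot sI\cdot I\dot sI\subseteq I\sqcup I\dot sI$ are Tits-system facts from Bruhat--Tits theory; I would cite these rather than reprove them.
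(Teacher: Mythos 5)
The paper does not prove Theorem \ref{prop:aff}: it is explicitly \emph{recalled} (``We first recall the Iwahori-Matsumoto presentation...'') as the classical Iwahori--Matsumoto result, with $\breve\ell$ entering by the work of Haines--Rostami/He cited in \S\ref{he1}, and the paper's own proofs begin with Theorem \ref{mult}. So there is no in-text argument to compare against. Your proof is the standard one and is essentially correct: the Bruhat--Iwahori decomposition gives the basis; the index formula $[I\dot wI:I]=q^{\breve\ell(w)}$ (the key place where the non-split structure enters, via the reductive quotients of parahorics and Lusztig's identification of $\BS$ with distinguished $\sigma$-orbits in $\bBS$) plus the Tits-system facts $I\dot wI\cdot I\dot w'I=I\dot w\dot w'I$ (when lengths add) and $I\dot sI\cdot I\dot sI\subseteq I\sqcup I\dot sI$ give (1) and the quadratic relation by a support-and-volume count, and (2) follows formally from associativity. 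Two small remarks worth folding in if you write this up carefully: (i) your derivation of the volume formula ``by multiplicativity along a reduced expression'' already uses the length-additive identity $I\dot wI\cdot I\dot w'I=I\dot w\dot w'I$ and the bijection $I\dot wI\times_I I\dot w'I\to I\dot w\dot w'I$, so the Tits-system input should be established before the general-$w$ volume formula, not after; and (ii) you should state explicitly that $(I\dot sI)^{-1}=I\dot sI$ because $\dot s^2$ lies in $M(F)_1\subset I$, which is what makes the evaluation at the identity count $[I\dot sI:I]$ rather than something smaller.
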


The first main result of this section is the following similar presentation for $\CH_n$ for $n \ge 1$. We call it the Iwahori-Matsumoto presentation for $\CH_n$. 
\begin{theorem}\label{mult} 
Let $n \ge 1$. The algebra $\CH_n$ is generated by $\mathbbm{1}_{I_n g I_n}$ for $g \in G(F)$ subject to the following relations:

(0) If $g$ and $g'$ are in the same $I_n \times I_n$-coset of $G(F)$, then $\mathbbm{1}_{I_n g I_n}=\mathbbm{1}_{I_n g' I_n}$.

(1) If $\ell(\pi(g g'))=\ell(\pi(g))+\ell(\pi(g'))$, then $$\mathbbm{1}_{I_n g I_n} *\mathbbm{1}_{I_n g' I_n}=\mathbbm{1}_{I_n g g' I_n}.$$ 

(2) If $\pi(g)=s \in \BS$, then $$\mathbbm{1}_{I_n g I_n} *\mathbbm{1}_{I_n g' I_n}=\begin{cases} q^{\breve \ell(s)} \mathbbm{1}_{I_n g g' I_n}, & \text{ if } \pi(g g')=\pi(g'), \\ q^{\breve \ell(s)} \sum_{u \in P_{s, n}/I_n} \mathbbm{1}_{I_n u g g' I_n}, & \text{ if } \pi(g g')<\pi(g').\end{cases}$$
\end{theorem}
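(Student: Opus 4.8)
\textbf{Proof plan for Theorem \ref{mult}.}

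The plan is to reduce the relations for $\CH_n$ to the corresponding facts about the Iwahori--Hecke algebra $\CH_0$ via the natural averaging/idempotent bridge between the two, combined with a careful analysis of double cosets $I_n g I_n$ fibered over the Iwahori double cosets $I \pi(g) I$. Here $\pi: G(F) \to W$ is the map sending $g$ to the unique $w$ with $g \in I w I$; since $I_n \subset I$, the double coset $I_n g I_n$ refines $I \pi(g) I$, and the set of $I_n\times I_n$ double cosets inside a fixed $I w I$ is finite and is a torsor-like quotient controlled by $I/I_n$ on both sides (modulo the centralizer/twist of $\dot w$). First I would set up the notation for these refined cosets and record that relation (0) is merely the statement that $\mathbbm{1}_{I_n g I_n}$ depends only on the $I_n\times I_n$-coset, which is a tautology once one fixes conventions; the content is in (1) and (2).

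For relation (1), the strategy is the standard support-and-coefficient computation for convolution: one shows that when $\ell(\pi(gg'))=\ell(\pi(g))+\ell(\pi(g'))$ the product of characteristic functions is supported on a single $I_n\times I_n$-coset with coefficient $1$. The key input is Theorem \ref{prop:aff}(1) at the Iwahori level, which tells us $I\dot w I \cdot I \dot w' I = I \dot w\dot w' I$ with the product map ``injective on the nose'', i.e.\ that the multiplication map $I w I \times^I I w' I \to I w w' I$ is a bijection when lengths add. Descending this to level $I_n$: one needs that $I g I_n \cdot g' I_n = I_n g g' I_n$ as sets, which follows from the length-additivity because the ``collision-free'' decomposition at the Iwahori level forces the same at the finer level, together with the fact (used throughout Bruhat--Tits theory, and implicit via Theorem \ref{prop:aff}) that for $\breve\ell(ss'\cdots)$ adding up one has the Iwahori factorization controlling how $I_n$ propagates. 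Concretely I would write $g = u_1 \dot w u_2$, $g' = u_1' \dot{w'} u_2'$ with $u_i, u_i' \in I$, absorb the $I$-parts, and check the cocycle bookkeeping of $I_n$-translates.

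For relation (2), take $\pi(g)=s\in\BS$, so $g\in IsI$ and $\breve\ell(s)$ is the relative length. Here I would invoke the parahoric subgroup $P_s$ and its filtration subgroup $P_{s,n}$ from \S\ref{sec:Ps}: the point is that $I s I \cup I = P_s$ (parahoric generated by $I$ and a lift of $s$), $[P_s : I] = q^{\breve\ell(s)}+1$, and $\mathbbm{1}_{P_{s}} = \mathbbm{1}_I + \mathbbm{1}_{IsI}$ at the Iwahori level, which after translating to the deeper level expresses the convolution $\mathbbm{1}_{I_n g I_n} * \mathbbm{1}_{I_n g' I_n}$ via summing over $P_{s,n}/I_n$ or its complement. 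Then one splits into the two cases by comparing $\pi(gg')$ with $\pi(g')$: if $\pi(gg')=\pi(g')$ (the ``$sw<w$'' case, here $s\pi(g')<\pi(g')$), the product lands in a single $P_{s,n}$-orbit and the coefficient is $q^{\breve\ell(s)}$ after counting $P_{s,n}/I_n$ fibers collapsing; if $\pi(gg')<\pi(g')$ fails, i.e.\ $\pi(gg')>\pi(g')$, then the sum spreads over the coset representatives $u\in P_{s,n}/I_n$ with coefficient $q^{\breve\ell(s)}$, again by a direct count of how many $I_n$-cosets inside $P_s g I_n$ map into the given target coset. The fact that only the relative lengths $\breve\ell(s)$ appear (as in Theorem \ref{prop:aff}) is controlled by $(a)$ of \S\ref{sec:WF}, so I would cite that rather than recompute.

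Finally, to conclude that these relations give a \emph{presentation} (not merely hold), I would argue that the abstract algebra $\CH_n'$ defined by generators $\mathbbm{1}_{I_n g I_n}$ and relations (0)--(2) surjects onto $\CH_n$ with the obvious map, and that $\CH_n'$ is spanned as a $\BZ$-module by the $\mathbbm{1}_{I_n g I_n}$ with no further collapse: using (1) and (0) one reduces any word to a product along a reduced expression of $\pi(g)$ times terms supported in $I$, and using (2) one handles non-reduced multiplications, so $\CH_n'$ has rank at most $|I_n\backslash G(F)/I_n|$, hence the surjection is an isomorphism. The finite generation claim then follows since $G(F) = \sqcup_{w\in W} IwI$ and $I/I_n$ is finite, so finitely many $\mathbbm{1}_{I_n g I_n}$ (those with $\pi(g)\in\BS\cup\Omega_\al\cup\{1\}$ together with $\mathbbm{1}_{gI_n}$ for $g\in I/I_n$) generate. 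The main obstacle I anticipate is relation (2) in the case $\pi(gg')>\pi(g')$: precisely pinning down which $I_n$-cosets $u g g' I_n$ (for $u$ ranging over $P_{s,n}/I_n$) are distinct, and verifying that the multiplicity is exactly $q^{\breve\ell(s)}$ rather than something smaller, requires understanding how $P_{s,n}$ interacts with the Iwahori factorization of $g'$ along its reduced expression --- this is where the Moy--Prasad filtration structure of \S\ref{sec:Ps} and the length-function compatibility $(a)$ of \S\ref{sec:WF} must be used most carefully.
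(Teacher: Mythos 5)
Your overall strategy matches the paper's quite closely: establish the relations via the parahoric filtration $P_{s,n}$ and length-additivity, then argue sufficiency from the fact that short-length elements generate. The places you flag as needing care are exactly where the paper does its technical work (the proof that $\breve P_{s,n}$ is normal in $\breve I$, that $g I_n g^{-1} I_n = P_{s,n}$, and the fiber-counting proposition for $I_n g I_n \times_{I_n} I_n g' I_n \to P_{s,n} g g' I_n$). However, a few issues merit attention.

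First, for relation (1) you claim that the ``collision-free'' factorization $I w I \times^I I w' I \to I w w' I$ at Iwahori level ``forces the same at the finer level.'' This does not follow formally: the bijection $I_n g I_n \times_{I_n} I_n g' I_n \cong I_n g g' I_n$ (when $\ell(\pi(gg'))=\ell(\pi(g))+\ell(\pi(g'))$) is a genuinely finer statement and is cited by the paper from \cite{He-1}, not deduced from Theorem \ref{prop:aff}. You should invoke that result rather than try to bootstrap from the Iwahori level.

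Second, you repeatedly misstate the case split in relation (2). The two subcases (under the implicit hypothesis $s\pi(g')<\pi(g')$) are $\pi(gg')=\pi(g')$, giving a single term $q^{\breve\ell(s)}\mathbbm{1}_{I_n gg' I_n}$, and $\pi(gg')<\pi(g')$, giving the sum over $P_{s,n}/I_n$. Your phrase ``if $\pi(gg')<\pi(g')$ fails, i.e.\ $\pi(gg')>\pi(g')$'' is a logical slip (the negation here is $\pi(gg')=\pi(g')$) and in any case the alternative $\pi(gg')>\pi(g')$ is simply the length-additive case covered by relation (1), not a branch of (2). The same error recurs in your closing paragraph. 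Also, the sum in (2) is indexed by $P_{s,n}/I_n$ and allows repeated terms, so ``pinning down which cosets $ugg'I_n$ are distinct'' is not what is needed; rather, one needs the fiber count of $q^{\breve\ell(s)}$ for the map to $P_{s,n}gg'I_n$, which is the content of the paper's fiber-counting proposition.

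Third, your sufficiency argument is vaguer than what is actually required. The paper's mechanism is cleaner: the full multiplication table $(*_{g_1,g_2})$ is tautologically a complete set of relations; Corollary \ref{gen} shows the algebra is generated by elements with $\ell(\pi(g_1))\le 1$, hence one only needs the $(*_{g_1,g_2})$ with $\ell(\pi(g_1))\le 1$; and a case check then identifies these with (0)--(2). Your phrasing ``rank at most $|I_n\backslash G(F)/I_n|$'' is also off since that cardinal is infinite; the correct statement is that the candidate basis spans the abstract algebra, which the paper's reduction-to-short-generators argument is designed to show.
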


In the above, $P_{s,n} = \bP_{s, n}^\sigma$ with $\bP_{s,n}$ as in \S \ref{sec:Ps}.  Note that if $s w<w$, then $(I \dot s I) (I \dot w I)=I \dot w I \sqcup I \dot s \dot w I$. For any $g \in I \dot s I$ and $g' \in I \dot w I$, there are two possibilities: either $(I_n g I_n) (I_n g' I_n) \subset I \dot w I$ or $(I_n g I_n) (I_n g' I_n) \subset I \dot s \dot w I$. Thus there are two cases for the multiplication $\mathbbm{1}_{I_n g I_n} * \mathbbm{1}_{I_n g' I_n}$. 

\subsection{Collection of some results from \cite{He-1}}\label{he1}
We define the map $$\pi: G(F) \to W, \quad g \mapsto w \text{ for } g \in I \dot w I.$$ It is proved in \cite[Lemma 4.5 \& Lemma 4.6]{He-1} that for any $g \in G(F)$, $(\breve I_n g \breve I_n/\breve I_n)^\s=I_n g I_n/I_n$ and $\sharp I_n g I_n/I_n=q^{\breve \ell(\pi(g))}$.

Moreover, it is proved in \cite[\S 4.4]{He-1} that for $g, g' \in G(F)$ with $\ell(\pi(g g'))=\ell(\pi(g))+\ell(\pi(g'))$, the multiplication map in $G(F)$ induces a bijection $$I_n g I_n \times_{I_n} I_n g' I_n \cong I_n g g' I_n.$$ Here $I_n g I_n \times_{I_n} I_n g' I_n$ be the quotient of $I_n g I_n \times I_n g' I_n$ by the action of $I_n$ defined by $a \cdot (z, z')=(z a \i, a z')$. In this case, $$\mathbbm{1}_{I_n g I_n} * \mathbbm{1}_{I_n g' I_n}=\mathbbm{1}_{I_n g g' I_n}.$$

\begin{corollary}\label{gen}
Let $n \ge 1$. The algebra $\CH_n$ is generated by $\mathbbm{1}_{I_n g I_n}$ for $g \in I$ and $\mathbbm{1}_{I_n \dot w I_n}$ for $w \in W$ with $\ell(w) \le 1$. In particular, $\CH_n$ is finitely generated.
\end{corollary}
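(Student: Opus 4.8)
The plan is to induct on $\ell(\pi(g))$ to express each generator $\one_{I_n g I_n}$ as a product of the claimed generators. Fix $g \in G(F)$ and write $w = \pi(g)$, so $g \in I \dot w I$. The base case is $\ell(w) = 0$, i.e.\ $w \in \Omega_\al$; here $I\dot w I = \dot w I = I \dot w$ and $\one_{I_n g I_n}$ is supported on a single $I$-coset. Decomposing $w = s_{i_1}\cdots s_{i_n}\t$ into a reduced expression, one reduces first to handling the length-one pieces $s \in \BS$ and the length-zero piece $\t$: the point is that if $w = s w'$ with $\ell(w) = 1 + \ell(w')$, then by relation (a) of \S\ref{he1} (the case $\ell(\pi(gg')) = \ell(\pi(g)) + \ell(\pi(g'))$), writing $g = g_1 g'$ with $\pi(g_1) = s$ and $\pi(g') = w'$ — which is possible after adjusting by an element of $I_n$ on the appropriate side — gives $\one_{I_n g I_n} = \one_{I_n g_1 I_n} * \one_{I_n g' I_n}$, and we recurse on $g'$.

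The first step I would carry out carefully is this factorization claim: given $g \in I\dot w I$ with $w = s w'$, $\ell(w) = 1 + \ell(w')$, there exist $g_1 \in I\dot s I$ and $g' \in I \dot w' I$ with $g = g_1 g'$ (we do not even need $g_1, g'$ to be chosen compatibly with $I_n$-cosets — once we have one such factorization, relation (1) of Theorem \ref{mult}, equivalently \S\ref{he1}, applies verbatim). This follows from the Iwahori-level factorization $I \dot w I = (I \dot s I)(I \dot w' I)$ (which holds since $\ell(w) = \ell(s) + \ell(w')$ for the Iwahori-Weyl group, using property (a) of \S\ref{sec:WF} to pass between $\breve\ell$ and $\ell$), together with the fact that every element of $I \dot w I$ lies in such a product. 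Iterating, we obtain $\one_{I_n g I_n}$ as a $*$-product of terms $\one_{I_n g_j I_n}$ with $\pi(g_j) = s_{i_j} \in \BS$ (length one) and one term with $\pi(g_m) = \t \in \Omega_\al$ (length zero). The length-one terms are among the stated generators. For the length-zero term $\one_{I_n g_m I_n}$ with $g_m \in I\dot\t = \dot\t I$: since $\dot\t$ normalizes $I$, we can write $g_m = \dot\t h$ with $h \in I$, so $\one_{I_n g_m I_n} = \one_{I_n \dot\t h I_n}$; and using relation (1) once more (the product of $\one_{I_n \dot\t I_n}$ with $\one_{I_n h I_n}$, which has $\pi(h) = e$ so lengths add trivially) this equals $\one_{I_n \dot\t I_n} * \one_{I_n h I_n}$, with $h \in I$. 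Thus every $\one_{I_n g I_n}$ is a product of $\one_{I_n \dot w I_n}$ for $\ell(w) \le 1$ and $\one_{I_n h I_n}$ for $h \in I$, which is the first assertion.

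For finite generation: by relation (0) of Theorem \ref{mult}, the generators $\one_{I_n g I_n}$ for $g \in I$ depend only on the class of $g$ in $I/I_n$, which is a finite group since $I_n$ is open of finite index in the compact group $I$; so there are finitely many such. The generators $\one_{I_n \dot w I_n}$ for $\ell(w) \le 1$ number finitely many modulo the ambiguity in the representative $\dot w$: for fixed $w$, two representatives differ by left and right multiplication by $I$, and $\one_{I_n \dot w I_n}$ for varying $\dot w \in I\dot w I$ ranges over finitely many functions since $I\dot w I/I_n$ is finite (indeed $\sharp I_n \dot w I_n/I_n = q^{\breve\ell(\pi(\dot w))}$ by \S\ref{he1}, and $I \dot w I$ is a finite union of such double cosets); and $\BS \cup \Omega_\al$ is finite because $W_\af$ has finite rank and $\Omega_\al$ is finite (it is the stabilizer of an alcove, isomorphic to a subgroup of $X^*(Z(\hat G)^{\Gamma_0})^\sigma$, which is finite). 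Hence $\CH_n$ is generated by finitely many elements.

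The step I expect to be the main obstacle is the careful bookkeeping in the factorization $g = g_1 g'$ and the verification that relation (1) applies at each stage — in particular being sure that the condition $\ell(\pi(g_1 g')) = \ell(\pi(g_1)) + \ell(\pi(g'))$ genuinely holds for the chosen factors (it does, since $\pi(g_1) = s$, $\pi(g') = w'$, and $\ell(sw') = 1 + \ell(w')$ by hypothesis), and that the inductive product telescopes correctly without picking up extra $q$-factors. Everything else is routine given Theorem \ref{mult} and the results recalled in \S\ref{he1}.
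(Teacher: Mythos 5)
Your overall structure --- write $w = s_{i_1}\cdots s_{i_n}\tau$ as a reduced expression, factor $g$ accordingly, and use relation $(1)$ of Theorem~\ref{mult} (equivalently \S\ref{he1}) to split $\mathbbm{1}_{I_n g I_n}$ as a $*$-product --- is the same as the paper's. However, there is a genuine error in the finite-generation step. You assert that $\Omega_\al$ is finite because it is ``isomorphic to a subgroup of $X^*(Z(\hat G)^{\Gamma_0})^\sigma$, which is finite.'' This is false in general: $X^*(Z(\hat G)^{\Gamma_0})^\sigma$ is finite only when $G$ is semisimple. For $G = \GL_n$ split over $F$ one has $Z(\hat G) = \BC^\times$, so $X^*(Z(\hat G)^{\Gamma_0})^\sigma = \BZ$ and $\Omega_\al \cong \BZ$ is infinite. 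Consequently your set $\{w \in W : \ell(w) \le 1\}$ is generally infinite, and your argument does not deliver finite generation. The paper instead uses only that $\Omega_\al$ is \emph{finitely generated}: fixing a finite generating set $\{\tau_1,\dots,\tau_l\}$ of $\Omega_\al$, any $\mathbbm{1}_{I_n\dot\tau I_n}$ with $\tau \in \Omega_\al$ is a $*$-product of the $\mathbbm{1}_{I_n\dot\tau_i I_n}$ (and their inverses), possibly corrected by some $\mathbbm{1}_{I_n h I_n}$ with $h \in I$, since all lengths involved are zero and relation $(1)$ applies throughout. You should replace your finiteness claim with this.

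A smaller imprecision: after iterating the factorization, your ``length-one terms'' are $\mathbbm{1}_{I_n g_j I_n}$ with $g_j$ an \emph{arbitrary} element of $I\dot s_{i_j} I$, not the chosen representative $\dot s_{i_j}$. Since $\dot w$ in the corollary must denote a fixed representative for each $w$ (otherwise the ``$g \in I$'' generators would already be subsumed by the case $\ell(w)=0$), you need to further factor $g_j = i_1\,\dot s_{i_j}\,i_2$ with $i_1, i_2 \in I$ and apply relation $(1)$ again, exactly as you did for the length-zero term $g_m$; this is the step ``$\mathbbm{1}_{I_n g I_n} = \mathbbm{1}_{I_n i_1 I_n} * \mathbbm{1}_{I_n\dot s I_n} * \mathbbm{1}_{I_n i_2 I_n}$'' in the paper's proof, which you omitted for the length-one pieces.
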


\begin{proof}
Note that $\CH_n$ is spanned by $\mathbbm{1}_{I_n g I_n}$ for $g \in G(F)$.

Suppose that $g \in I \dot w I$ and $w=s_{i_1} \cdots s_{i_n} \tau$ for $s_{i_1}, \ldots, s_{i_n} \in \BS$ and $\tau \in W$ with $\ell(\tau)=0$. Then $g=g_1 \cdots g_n g'$, with $g_j \in I \dot s_{i_j} I$ for $1 \le j \le n$ and $g' \in I \dot \tau I$. Then $\mathbbm{1}_{I_n g I_n}=\mathbbm{1}_{I_n g_1 I_n} *\cdots *\mathbbm{1}_{I_n g_n I_n}* \mathbbm{1}_{I_n g' I_n}$.

For any $s \in \BS$ and $g \in I \dot s I$, we have $g=i_1 \dot s i_2$ for some $i_1, i_2 \in I$. Then $\mathbbm{1}_{I_n g I_n}=\mathbbm{1}_{I_n i_1 I_n} *\mathbbm{1}_{I_n \dot s I_n} *\mathbbm{1}_{I_n i_2 I_n}$.

For any $\tau \in W$ with $\ell(\tau)=0$ and $g \in I \dot \tau I$, we have $g=i \tau$ for some $i \in I$. Then $\mathbbm{1}_{I_n g_1 I_n}=\mathbbm{1}_{I_n i I_n} *\mathbbm{1}_{I_n \dot \tau I_n}$.

Note that $\Omega_\al$ is finitely generated. Let $\{\t_1, \cdots, \t_l\}$ be a generating set of $\Omega_\al$. Then $\CH_n$ is generated by $\mathbbm{1}_{I_n g I_n}$ for $g \in I$, $\mathbbm{1}_{I_n \dot s I_n}$ for $s \in \BS$ and $\mathbbm{1}_{I_n \dot \t_i I_n}$ for $1 \le i \le l$. Thus $\CH_n$ is finitely generated. 
\end{proof}

\subsection{The subgroup $\bP_{s,n}$}Let $\bP_{s,n}$ be as in \S \ref{sec:Ps}.

\begin{lemma}\label{normal}
For $n \ge 1$, $\breve P_{s, n}$ is a normal subgroup of $\breve I$.
\end{lemma}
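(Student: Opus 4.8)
The plan is to exploit the two generating sets: $\breve I$ is generated by $T(\bF)_1$ together with the $U_{\ba}(\bF)_{\bal,0}$, while $\breve P_{s,n}$ is generated by $\breve T_n$ together with the $U_{\ba}(\bF)_{\bv_s,n}$, where $\bv_s$ is the barycenter of $\bal_s$ (see \S\ref{sec:Ps}). Since $\breve P_{s,n}$ is already a group, it suffices to check that conjugation by each generator of $\breve I$ carries each generator of $\breve P_{s,n}$ back into $\breve P_{s,n}$; equivalently, it suffices to show the Moy--Prasad-type filtration defining $\breve P_{s,n}$ is stable under $\Int(\breve I)$. First I would treat the torus part: $\breve T_n = \Ker(\CT^{NR}(\fkO_\bF) \to \CT^{NR}(\fkO_\bF/\fkp_\bF^n))$ is visibly normalized by $T(\bF)_1 = \CT^{NR}(\fkO_\bF)$ since $T$ is commutative, and it is centralized (hence stabilized) by any root subgroup element up to the commutator relations, which land in the unipotent part and can be absorbed there. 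Then I would handle the unipotent generators.

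The key point is the standard filtration computation in Bruhat--Tits theory: for $u \in U_{\ba}(\bF)_{\bal,0}$ and $v \in U_{\bc}(\bF)_{\bv_s,n}$, the commutator $[u,v]$ lies in the product of the groups $U_{i\ba+j\bc}(\bF)_{\bv_s, \, i\cdot 0 + j \cdot n}$ over positive integers $i,j$ with $i\ba + j\bc \in \breve\Phi(G,S)$, times a torus term in $\breve T_{n}$ (using $\langle \ba, \bv_s\rangle \in \BZ$ because $\bv_s$ is a vertex-type point adapted to the parahoric $\bP_s$, and $\langle \ba,\bv\rangle + \phi_\ba(u) \ge 0$). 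Since each exponent $i\cdot 0 + j\cdot n \ge n$ for $j \ge 1$, every such commutator term lies in $\breve P_{s,n}$; therefore $u v u^{-1} = [u,v]\,v \in \breve P_{s,n}$. The cleanest way to organize this is to invoke that $\breve P_{s,n}$ is the $n$-th Moy--Prasad filtration subgroup $\bP_{s}$ at the barycenter $\bv_s$ and that $\bv_s \in \overline{\bal}$, so $\bI \subset \bP_s$; then normality of $\bP_{s,n}$ in $\bI$ follows from normality of $\bP_{s,n}$ in $\bP_s$ (which holds for $n \ge 1$ since the Moy--Prasad filtration subgroups of a parahoric are normal in it, cf.\ the analogous statement recorded for $\bI_n \triangleleft \bI$ in \S 2.4).

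I expect the main obstacle to be the bookkeeping on which filtration index governs the commutator: one must be careful that conjugating a ``deep'' element at $\bv_s$ by a ``shallow'' element at $\bal$ does not decrease the depth below $n$. This is exactly where the hypothesis $n \ge 1$ enters (for $n = 0$ one only gets $\breve P_{s,0} = \breve P_s \supsetneq \breve I$ and the statement is false in the naive form). Concretely, the only terms produced are $U_{i\ba+j\bc}$ with $j \ge 1$, so the depth is at least $n$; the $i = 0$ contribution would be problematic but does not occur in a genuine commutator. Once this is pinned down, the argument is a routine application of the valuation-of-root-datum axioms from \cite[\S6.2]{BT1} and \cite[\S4.3--\S4.6]{BT2}, together with the description of $\bP_s$ and $\bP_{s,n}$ in \S\ref{sec:Ps}.
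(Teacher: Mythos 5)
Your opening claim, ``Since $\breve P_{s,n}$ is already a group, it suffices to check that conjugation by each generator of $\breve I$ carries each generator of $\breve P_{s,n}$ back into $\breve P_{s,n}$,'' is not correct, and this is where the proposal has a genuine gap. Stability under $\Int(\breve I)$ establishes that $\breve I$ \emph{normalizes} $\breve P_{s,n}$, but for $\breve P_{s,n}$ to be a \emph{normal subgroup of} $\breve I$ you also need the containment $\breve P_{s,n} \subset \breve I$, and nothing in your argument proves this. Both the commutator computation and the ``cleaner'' route you sketch (normality of $\bP_{s,n}$ in $\bP_s$ together with $\bI \subset \bP_s$) only give the normalizing property; they say nothing about $\breve P_{s,n}$ sitting inside $\breve I$. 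Your parenthetical remark --- that for $n=0$ one has $\breve P_{s,0} = \breve P_s \supsetneq \breve I$ --- shows you sensed that containment is the real issue, but you never circle back and verify it for $n\ge 1$.

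The paper's proof has exactly two steps and the one you omit is the substantive one. After observing (as you do) that $\breve P_{s,n}\trianglelefteq \breve P_s$ and $\breve I\subset \breve P_s$ force $\breve I$ to normalize $\breve P_{s,n}$, the paper then proves $\breve P_{s,n}\subset \breve I$ directly from the filtration inequalities: $\breve P_{s,n}$ is generated by $\breve T_n$ and the groups $U_\ba(\bF)_{\bv_s,n}$, and since $\bv_s$ lies in the closure of the base alcove one has $0 \le \langle\ba,\bv_s\rangle \le 1$ for $\ba$ positive. Hence $u\in U_\ba(\bF)_{\bv_s,n}$ gives $\phi_\ba(u)\ge n - \langle\ba,\bv_s\rangle \ge n-1 \ge 0$ if $\ba$ is positive, and $\phi_\ba(u)\ge n \ge 1$ if $-\ba$ is positive; in either case $u \in \bI$. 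This is where the hypothesis $n\ge 1$ actually does its work, not in the depth bookkeeping of the commutator argument. You should add this containment step to close the gap.
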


\begin{proof}
Note that $\breve P_{s, n}$ is a normal subgroup of the parahoric subgroup $\breve P_{s}$. Since $\breve I \subset \breve P_{s}$, we have that $\breve P_{s, n}$ is stable under the conjugation action of $\breve I$.

It remains to show that $\breve P_{s, n} \subset \breve I$.

Recall that $\bv_s$ is the barycenter of the facet $\bal_s$ in the closure of the base alcove $\bal$. Let $\breve \Phi^+(G,S)$ be the set of positive roots in $\breve \Phi(G,S)$. Then, using \cite[\S 4.2.22]{BT2}, it follows that $0 \le \<\ba, \bv_s\> \le 1$ for any $a \in \breve\Phi^+(G,S)$.

By definition, $\breve P_{s, n}$ is generated by $T_n$ and $U_\ba(\bF)_{\bv_s, n}$. Let $u \in U_\ba(\bF)_{\bv_s, n}$. If $\ba \in \Phi^+(G,S)$, then the condition $\langle \ba, \bv_s\rangle + \phi_\ba(u) \geq n$ implies that $\phi_\ba(u) \ge n-1 \ge 0$. If $-a \in \Phi^+(G,S)$, then the condition $\<\ba, \bv_s\> +\phi_\ba(u) \ge n$ implies that $ \phi_\ba(u) \ge n \ge 1$. In both cases, $ U_\ba(\bF)_{\bv_s, n}\subset \bI$. Therefore $\breve P_{s, n} \subset \bI$.
\end{proof}

\begin{lemma}\label{gg-s}
Let $g \in G$ with $\pi(g)=s \in \BS$. Set $P_{s, n}=\breve P_{s, n}^\s$. Then for $n \ge 1$, $g I_n g \i I_n=I_n g I_n g \i=P_{s, n}$ and it is a normal subgroup of $I$.
\end{lemma}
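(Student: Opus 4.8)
The plan is to reduce the statement over $F$ to the corresponding statement over $\bF$, where one can work directly with the parahoric and Moy-Prasad filtration subgroups, and then take $\sigma$-fixed points. First I would recall that by Lemma \ref{normal}, $\bP_{s,n}$ is a normal subgroup of $\bI$, hence in particular $\bP_{s,n} \subset \bI$, and $\bP_{s,n}$ is normalized by $\bP_s$. Since $\pi(g) = s$, we may write $g = i_1 \dot s i_2$ for some $i_1, i_2 \in I \subset \bI$; because $\bP_{s,n} \trianglelefteq \bI$ and $\bI \subset \bP_s$, conjugation by $i_1$ and $i_2$ preserves $\bP_{s,n}$, so it suffices to analyze conjugation by a single lift $\dot s$ of $s$ (equivalently, of the longest element $\bw_\CX$ of $\bW_\CX$, using the description of $\BS$ recalled in \S\ref{sec:WF}). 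Here $\dot s \in \bP_s$ normalizes $\bP_s$, and since $\bP_{s,n}$ is a characteristic subgroup of $\bP_s$ (it is the $n$-th term of the Moy-Prasad filtration attached to the barycenter $\bv_s$, which is canonically determined by $\bP_s$), we get $\dot s\, \bP_{s,n}\, \dot s^{-1} = \bP_{s,n}$.

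Next I would establish the key set-theoretic identity over $\bF$: for $g$ with $\pi(g) = s$,
\[
g\, \bI_n\, g^{-1}\, \bI_n \;=\; \bI_n\, g\, \bI_n\, g^{-1} \;=\; \bP_{s,n}.
\]
One inclusion is the observation that $g \bI_n g^{-1} \subset \bP_s$ (since $g \in \bP_s$ and $\bI_n \subset \bI \subset \bP_s$, both the $T$-part and the root-group parts are moved inside $\bP_s$), and $\bI_n \subset \bP_{s,n} \subset \bP_s$; combined with the normality of $\bP_{s,n}$ in $\bP_s$ from the previous paragraph, both products $g\bI_n g^{-1}\bI_n$ and $\bI_n g \bI_n g^{-1}$ land inside $\bP_{s,n}$. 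For the reverse inclusion one argues that the double coset $\bI \dot s \bI$ together with $\bI$ fills out $\bP_s$, so already $\bI_n \dot s \bI_n \cdot \dot s^{-1}$ (or the analogous product) generates $\bP_{s,n}$ modulo $\bI_n$; more precisely, by the structure of the parahoric $\bP_s$ attached to $\bal_s$ and the fact that $\bP_{s,n}/\bI_n$ is generated by the images of $U_{\ba}(\bF)_{\bv_s,n}$ for the finitely many $\ba$ with $\langle \ba, \bv_s\rangle < \langle\ba,\bv_\bal\rangle$ (those "crossing the wall"), conjugating $\bI_n$ by $\dot s$ supplies exactly these extra root-group pieces. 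I would write this out using the affine root group filtration $U_\ba(\bF)_{\bv,r}$ and the relation $\dot s\, U_\ba(\bF)_{\bv_\bal, n}\, \dot s^{-1} = U_{s(\ba)}(\bF)_{s(\bv_\bal), n}$, noting $s(\bv_\bal)$ lies in the star of $\bal_s$.

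Finally, to descend to $F$: the alcove $\bal$, its facet $\bal_s$, the barycenter $\bv_s$, and hence $\bP_s$ and $\bP_{s,n}$ are all $\sigma$-stable (the $\sigma$-orbit $\CX$ is $\sigma$-stable by construction), so $\sigma$ acts on the identity $g \bI_n g^{-1} \bI_n = \bI_n g \bI_n g^{-1} = \bP_{s,n}$ whenever $g \in G(F)$. Taking $\sigma$-fixed points and using $\bI_n^\sigma = I_n$, $\bP_{s,n}^\sigma = P_{s,n}$, together with the fact (from \cite[Lemma 4.5]{He-1}, recalled in \S\ref{he1}) that $(\bI_n g \bI_n / \bI_n)^\sigma = I_n g I_n / I_n$ — which lets one pass $\sigma$-invariants through the products — yields $g I_n g^{-1} I_n = I_n g I_n g^{-1} = P_{s,n}$. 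Normality of $P_{s,n}$ in $I$ is then immediate from normality of $\bP_{s,n}$ in $\bI$ by taking $\sigma$-fixed points, since $I = \bI^\sigma$.

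\emph{Main obstacle.} The delicate point is the reverse inclusion $\bP_{s,n} \subset g \bI_n g^{-1} \bI_n$: one must check that conjugating the deep Iwahori $\bI_n$ by a lift of the longest element $\bw_\CX$ genuinely produces all of the "extra" root-group filtration pieces that distinguish $\bP_{s,n}$ from $\bI_n$, and that no filtration jumps are lost. This requires a careful bookkeeping with the valuation of the root datum $(\phi_\ba)$ and the inequalities $0 \le \langle\ba,\bv_s\rangle \le 1$ from \cite[\S 4.2.22]{BT2} (as in the proof of Lemma \ref{normal}), keeping track of which roots change sign under $s$. Everything else is formal manipulation of $\sigma$-fixed points and normality.
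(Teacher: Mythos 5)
Your overall strategy — prove $g\bI_n g^{-1}\bI_n = \bI_n g\bI_n g^{-1} = \bP_{s,n}$ over $\bF$, then descend via $\sigma$-fixed points using the results of \S\ref{he1} — is exactly the paper's. However, the step you flag as the "main obstacle" is precisely where the proof lives, and your sketch does not supply it; moreover the route you gesture at (enumerating "extra" root-group pieces that appear after conjugating by $\dot s$ and checking "which roots change sign") is harder than necessary and risks a genuine bookkeeping error. The paper's argument is cleaner: take any generator $u \in U_\ba(\bF)_{\bv_s,n}$ of $\bP_{s,n}$; if $u \notin \bI_n$, then the affine root $\ba + \phi_\ba(u) - n$ is negative, yet $\langle\ba,\bv_s\rangle + \phi_\ba(u) - n \ge 0$ forces equality, so this affine root vanishes at $\bv_s$; since $s = \bw_\CX$ is the longest element of the Weyl group of $\breve\Phi_{\bv_s}$, it maps negative affine roots of $\breve\Phi_{\bv_s}$ to positive ones, whence $u \in \dot s\bI_n\dot s^{-1}$. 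Every generator of $\bP_{s,n}$ thus lies in $\bI_n \cup \dot s\bI_n\dot s^{-1}$, and since $\bI_n(\dot s\bI_n\dot s^{-1}) = (\dot s\bI_n\dot s^{-1})\bI_n$ is a group (because $\dot s\bI_n\dot s^{-1} \subset \bI$ normalizes $\bI_n$ for $n\ge 1$), the equality $\bP_{s,n} = \bI_n(\dot s\bI_n\dot s^{-1})$ follows at once, with no wall-crossing analysis.

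Two further imprecisions are worth fixing. For the forward inclusion you assert that $g\bI_n g^{-1} \subset \bP_s$ together with $\bP_{s,n} \trianglelefteq \bP_s$ makes the products land in $\bP_{s,n}$ — as written this is a non sequitur. What you actually want is $g\bI_n g^{-1} \subset \bP_{s,n}$ directly, which does follow from $g \in \bP_s$, $\bI_n \subset \bP_{s,n}$, and $\bP_{s,n} \trianglelefteq \bP_s$, so the conclusion is recoverable but the stated reasoning is off. And in the descent step, $\sigma$-invariants do not commute with taking set-theoretic products of subgroups, so "pass $\sigma$-invariants through the products" needs justification; the paper does this by passing to the quotient $\bP_{s,n}/\bI_n$ and computing $(\bP_{s,n}/\bI_n)^\sigma = (g\bI_n g^{-1}\bI_n/\bI_n)^\sigma = g(\bI_n g^{-1}\bI_n/\bI_n)^\sigma = g\,(I_n g^{-1}I_n/I_n)$, invoking exactly the fact from \cite{He-1} you cite — you should make the quotient step explicit rather than letting it hide in the phrasing.
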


\begin{proof}
For $n \ge 1$, $\breve I_n$ is stable under the conjugation action of $\dot s \breve I_n \dot s \i$ since $\dot s \breve I_n \dot s \i \subset \bI$. Therefore $\breve I_n (\dot s \breve I_n \dot s \i)=(\dot s \breve I_n \dot s \i) \breve I_n$. 

On the other hand, $T_n \subset \breve I_n$. 
Let $\ba \in \breve\Phi(G,S)$ and let $u \in U_\ba(\bF)_{\bv_s,n}$. So $\<\ba, \bv_s\>+ \phi_\ba(u) \geq n$.  We claim that $ u \in \bI_n \cup \dot \bs\bI_n \dot\bs\i$. Suppose $ u \notin \bI_n$. Then $\ba + \phi_\ba(u)-n$ is a negative affine root, so $\<\ba, \bv_s\>+ \phi_\ba(u) -n \leq 0$. Hence $\<\ba, \bv_s\>+ \phi_\ba(u)- n=0$. Then $\ba+\phi_\ba(u) - n$ belongs to $\breve\Phi_{\bv_s}$, the set of affine roots that vanish at $\bv_s$. Write $\breve\Phi_{\bv_s} = (\Phi_{\bv_s} \cap \Phi^+_{\af}(G,S)) \sqcup (\Phi_{\bv_s} \cap \Phi_{\af}^-(G,S))$. Then, since $\bW_\CX$ is the Weyl group of $\Phi_{\bv_s}$ and $s$ is the longest element of $\bW_{\CX}$, it follows that $s(\ba+\phi_\ba(u)-n)=s(\ba)+\phi_\ba(u)-n$ is a positive affine root. Then $u \in \dot s \breve I_n \dot s \i$. Therefore $$\breve P_{s, n}=\breve I_n (\dot s \breve I_n \dot s \i)=(\dot s \breve I_n \dot s \i) \breve I_n.$$

We have $g \breve I_n g \i=(i \dot s) \breve I_n (i s) \i=i (\dot s \breve I_n \dot s \i) i \i$ for some $i \in \breve \CI$. Thus $$(g \breve I_n g \i) \breve I_n =i (\dot s \breve I_n \dot s \i) i \i \breve I_n =i (\dot s \breve I_n \dot s \i)\breve I_n  i \i=i \breve P_{s, n} i \i=\breve P_{s, n}.$$

Thus $$(\breve P_{s, n}/\breve I_n)^\s=(g \breve I_n g \i \breve I_n/\breve I_n)^\s=g (\breve I_n g \i \breve I_n/\breve I_n)^\s=g (I_n g \i I_n/I_n).$$ Here the last equality follows from \S\ref{he1}.

Thus $g I_n g \i I_n=\breve P_{s, n}^\s=P_{s, n}$. This is a normal subgroup of $I$ since $\breve P_{s, n}$ is a normal subgroup of $\breve I$. As $P_{s, n} \subset I$ and $I_n$ is a normal subgroup of $I$, we also have $(g I_n g \i) I_n=I_n (g I_n g \i)$.
\end{proof}

\begin{proposition}
Let $n \ge 1$. Let $g, g' \in G$ with $\pi(g)=\pi(g')=s \in \BS$. The multiplication map on $G$ induces a surjective map $$I_n g I_n \times_{I_n} I_n g' I_n \to P_{s, n} g g' I_n.$$ Moreover, each fiber contains exactly $q^{\breve \ell(s)}$ elements.
\end{proposition}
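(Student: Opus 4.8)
The plan is to compute the image of the induced map — which gives surjectivity at once — and then to show that each fibre is as large as it could possibly be, namely that it consists of \emph{all} the $I_n$-cosets contained in $I_n g I_n$.

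Write $\mu\colon I_n g I_n\times I_n g' I_n\to G$ for the multiplication map. Since $\mu(za^{-1},az')=zz'$ for $a\in I_n$, it descends to a map $\bar\mu\colon I_n g I_n\times_{I_n} I_n g' I_n\to G$. Its image is $(I_n g I_n)(I_n g' I_n)=I_n g I_n g' I_n$, and by Lemma~\ref{gg-s} applied to $g$ one has $I_n g I_n g^{-1}=P_{s,n}$, so $I_n g I_n g' I_n=(I_n g I_n g^{-1})gg' I_n=P_{s,n}gg' I_n$; this is the asserted surjectivity. For a fibre, fix $h\in P_{s,n}gg'I_n$. An element of $\bar\mu^{-1}(h)$ is the class of a pair $(z,z^{-1}h)$ with $z\in I_n g I_n$ and $z^{-1}h\in I_n g' I_n$, and two such classes coincide exactly when the two choices of $z$ lie in one coset $zI_n$. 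Hence $\bar\mu^{-1}(h)$ is in bijection with $\{\,zI_n : z\in I_n g I_n,\ z^{-1}h\in I_n g' I_n\,\}$, a subset of the set of all $I_n$-cosets contained in $I_n g I_n$, which has $q^{\breve\ell(\pi(g))}=q^{\breve\ell(s)}$ elements by \S\ref{he1}.

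Thus the whole statement reduces to the identity $(I_n g I_n)^{-1}P_{s,n}gg' I_n=I_n g' I_n$, which forces $z^{-1}h\in I_n g' I_n$ for every $z\in I_n g I_n$ and every $h\in P_{s,n}gg'I_n$. To prove it: $(I_n g I_n)^{-1}=I_n g^{-1}I_n$, and since $I_n\subseteq P_{s,n}$ we get $I_n g^{-1}I_n\cdot P_{s,n}gg' I_n=I_n g^{-1}P_{s,n}gg' I_n$. Now $g$ normalizes $P_{s,n}$: indeed $\pi(g)=s$ lies in $\bW_{\CX}$, so $g\in\bI\dot s\bI\subseteq\bP_s$; since $\bP_{s,n}$ is normal in $\bP_s$ (see the proof of Lemma~\ref{normal}) we get $g\bP_{s,n}g^{-1}=\bP_{s,n}$, and passing to $\sigma$-fixed points, $gP_{s,n}g^{-1}=P_{s,n}$. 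Therefore $I_n g^{-1}P_{s,n}gg' I_n=I_n P_{s,n}g' I_n=P_{s,n}g' I_n$. Finally, Lemma~\ref{gg-s} applied to $g'$ (note $\pi(g')=s$) gives $P_{s,n}=I_n g' I_n g'^{-1}$, whence $P_{s,n}g' I_n=I_n g' I_n g'^{-1}g' I_n=I_n g' I_n$, as needed. It follows that $\bar\mu^{-1}(h)$ is the set of all $q^{\breve\ell(s)}$ cosets $zI_n\subseteq I_n g I_n$.

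The main obstacle — really the only non-formal point — is the collapse $(I_n g I_n)^{-1}P_{s,n}gg'I_n=I_n g'I_n$. It rests on two inputs extracted from Lemma~\ref{gg-s}: that $g$ (hence $g^{-1}$) normalizes $P_{s,n}$, and that $P_{s,n}=I_n g'I_n g'^{-1}$, the latter being exactly what lets the extra factor $P_{s,n}$ be absorbed into the double coset $I_n g'I_n$ from the left. Once these are in hand, the remainder is the coset bookkeeping of the second paragraph together with the cardinality $\#(I_n g I_n/I_n)=q^{\breve\ell(s)}$ recalled in \S\ref{he1}.
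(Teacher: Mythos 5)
Your proof is correct, and it takes a genuinely different (and arguably cleaner) route to the fiber count than the paper does. The surjectivity step is identical: both arguments produce the image $P_{s,n}gg'I_n$ from Lemma~\ref{gg-s} applied to $g$. For the fibers, however, the paper splits into the two cases $\pi(gg')=s$ and $\pi(gg')=1$; in the first it uses left $I_n$-equivariance and transitivity of the $I_n$-action on $P_{s,n}gg'I_n/I_n$, and in the second it reduces to $g'=g^{-1}$ and analyzes the fibers via the normal subgroup $g^{-1}I_ng\cap I_n$ of $I_n$ --- in both cases concluding that all fibers are equinumerous and then dividing $q^{2\breve\ell(s)}$ by $q^{\breve\ell(s)}$. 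You instead prove the single identity $(I_ngI_n)^{-1}P_{s,n}gg'I_n=I_ng'I_n$, which makes the constraint $z^{-1}h\in I_ng'I_n$ vacuous and shows at one stroke that \emph{every} fiber is the full set $I_ngI_n/I_n$ of $q^{\breve\ell(s)}$ cosets, with no case distinction. Your identity rests on two inputs that the paper has available but packages differently: that $g$ normalizes $P_{s,n}$ (from $g\in P_s$ together with the normality of $\bP_{s,n}$ in $\bP_s$ noted in the proof of Lemma~\ref{normal}), and the equality $P_{s,n}=I_ng'I_n(g')^{-1}$ from Lemma~\ref{gg-s}. The payoff of your route is the structurally sharper conclusion that every fiber is precisely $I_ngI_n/I_n$, not merely that all fibers share a common cardinality.
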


\begin{proof}
By Lemma \ref{gg-s}, $I_n g I_n g' I_n=(I_n g I_n g \i) g g' I_n=P_{s, n} g g' I_n$.

If $\pi(g g')=s$, then $P_{s, n}=I_n (g g') I_n (g g') \i$ and $$P_{s, n} g g' I_n=I_n (g g') I_n (g g') \i (g g') I_n=I_n g g' I_n.$$ By \S\ref{he1}, $\sharp I_n g g' I_n/I_n=q^{\breve \ell(s)}$. Since the map $I_n g I_n \times_{I_n} I_n g' I_n/I_n \to P_{s, n} g g' I_n/I_n$ is equivariant under the left action of $I_n$ and $I_n$ acts transitively on $P_{s, n} g g' I_n/I_n$, all the fibers have the same cardinality and the cardinality equals to $$\sharp(I_n g I_n \times_{I_n} I_n g' I_n/I_n)/\sharp(P_{s, n} g g' I_n/I_n)=q^{2\breve \ell(s)}/q^{\breve \ell(s)}=q^{\breve \ell(s)}.$$

If $\pi(g g')=1$, then $g'=g \i i$ for some $i \in I$ and we have the following commutative diagram
\[
\xymatrix{
I_n g I_n \times_{I_n} I_n g' I_n \ar[r] \ar[d] & P_{s, n} g g' I_n \ar[d] \\
I_n g I_n \times_{I_n} I_n g \i I_n i \ar[r] & P_{s, n} i.
}
\]

Thus it suffices to consider the case where $g'=g \i$. Since $n \ge 1$, by Lemma \ref{normal} and Lemma \ref{gg-s}, $g I_n g \i \subset P_{s, n} \subset I$. Since $I_n$ is a normal subgroup of $I$, $I_n$ is stable under the conjugation action of $g I_n g \i$. Thus $g \i I_n g$ is stable under the conjugation action of $I_n$. Conjugation by $g \i$, we have that $g \i I_n g \cap I_n$ is a normal subgroup of $I_n$. Thus for any $p \in P_{s, n}$, the inverse image of $I_n p I_n$ in $I_n g I_n \times_{I_n} I_n g' I_n$ equals to $$\{(I_n g a, b g \i I_n); a, b \in I_n/(g \i I_n g \cap I_n), g a b g \i \in I_n p I_n\}/I_n.$$

Let $p' \in P_{s, n}$. Then since $P_{s, n}=(g I_n g \i) I_n$, we have $p' I_n=(g i g \i) p I_n$ for some $i \in I_n$. Note that $I_n$ is stable under the conjugation action of $g i g \i \in P_{s, n} \subset I$. Hence $(g i g \i) I_n p I_n=I_n (g i g \i) p I_n=I_n p' I_n$ and the inverse image of $I_n p' I_n$ in $I_n g I_n \times_{I_n} I_n g' I_n$ equals to $$\{(I_n g i a, b g \i I_n); a, b \in I_n/(g \i I_n g \cap I_n), g a b g \i \in I_n p I_n\}/I_n.$$

In particular all the fibers have the same cardinality. So the cardinality of each fiber equals to $$\sharp(I_n g I_n \times_{I_n} I_n g' I_n/I_n)/\sharp(P_{s, n}/I_n)=q^{2\breve \ell(s)}/q^{\breve \ell(s)}=q^{\breve \ell(s)}.$$

The statement is proved.
\end{proof}

\subsection{Proof of Theorem \ref{mult}}
We choose a representative $g$ for each $I_n \times I_n$-orbit on $G(F)$. We denote the set of representatives by $\CY$. Then the set $\{\mathbbm{1}_{I_n g I_n}; g \in \CY\}$ is a basis of $\CH_n$ as a free $\BZ$-module. In particular, the set $\{\mathbbm{1}_{I_n g I_n}; g \in \CY\}$ generates $\CH_n$ as an algebra. 

For any $g_1, g_2 \in \CY$, we have \[\mathbbm{1}_{I_n g_1 I_n} * \mathbbm{1}_{I_n g_1 I_n}=\sum_{g_3 \in \CY} c_{g_1, g_2, g_3} \mathbbm{1}_{I_n g_3 I_n} \in \CH_n\] for some $c_{g_1, g_2, g_3} \in \BZ$. We denote this relation by $(*_{g_1, g_2})$. It is tautological that the equalities $(*_{g_1, g_2})$ for $g_1, g_2 \in \CY$ form a set of relations for the algebra $\CH_n$. 

By Corollary \ref{gen}, the algebra $\CH_n$ is generated by $\mathbbm{1}_{I_n g I_n}$ for $g \in \CI$ with $\ell(\pi(g)) \le 1$. Thus the equalities $(*_{g_1, g_2})$ for $g_1, g_2 \in \CY$ with $\ell(\pi(g_1)) \le 1$ form a set of relations for the algebra $\CH_n$. 

If $\ell(\pi(g_1))=0$, then for any $g_2 \in \CY$, $\ell(\pi(g_1 g_2))=\ell(\pi(g_2))$. Thus the equality $(*_{g_1, g_2})$ is obtained from the relations $(0)$ and $(1)$ in Theorem \ref{mult}. 

If $\ell(\pi(g_1))=1$, then $\pi(g_1)=s$ for some $s \in \BS$. Let $w=\pi(g_2)$. If $s w>w$, then the equality $(*_{g_1, g_2})$ is obtained from the relations $(0)$ and $(1)$ in Theorem \ref{mult}. If $s w<w$, then $\pi(g_1 g_2)=\pi(g_2)$ or $\pi(g_1 g_2)<\pi(g_2)$. In either case, the equality $(*_{g_1, g_2})$ is obtained from the relations $(0)$ and $(2)$ in Theorem \ref{mult}. 

Theorem \ref{mult} is proved. 

\subsection{The Howe-Tits presentation of $\CH_n$}

In \cite{How85}, Howe discovered a nice presentation for the Hecke algebra $\CH_n$. This presentation was later generalized to split groups by the first-named author in \cite{Gan15}. This nice presentation of $\CH_n$ has found applications in the representation theory of $p$-adic groups. For instance, this presentation was used to establish a variant of a Hecke algebra isomorphism of Kazhdan for sufficiently close local fields, which in turn was used to study the local Langlands correspondence for connected reductive groups in characteristic $p$ with an understanding of the local Langlands correspondence of such groups in characteristic 0 (see \cite{Bad02, Lem01,  Gan15, ABPS14, GV17}). 

Before stating the theorem, we first introduce some structure constants. 

For $\tau, \tau' \in \Omega_\al$ and $s \in \BS$, let 
\begin{align}\label{cs}
    c_{\tau, \tau'} &= m(\tau)m(\tau') m(\tau\tau')\i\\\nonumber
    c_{\tau, s} &= m(\tau)m(s)m(\tau)\i m(\tau s\tau\i)\i.\nonumber
\end{align}

Recall that the Tits' axiom (T3) (see \cite[\S 1.2.6]{BT1}) says that $m(s)Im(s)^{-1} \subset I \cup I m(s) I$. In particular, if $g \in I$ but $g \notin I\cap m(s)Im(s)^{-1}$, this axiom implies that $m(s)gm(s)\i \in Im(s)I$. Hence there exist $g_1, g_2$ in $I$ such that $m(s)g m(s)^{-1} =g_1 m(s) g_2$.  
We have the following theorem.

\begin{theorem}\label{thm:howe-tits} 
Let $\CT$ be a Tits group of $W$ and $\{m(w)\}_{w \in W}$ is a Tits cross-section of $W$ in $\CT$. The Hecke algebra $\CH_n$ has generators
\begin{enumerate}
\item $\mathbbm{1}_{I_n m(s) I_n}, s \in \BS$,
\item $\mathbbm{1}_{I_n m(\tau) I_n},\; \tau \in \Omega_\al$,
\item $\mathbbm{1}_{I_n g I_n}, g \in I$,
\end{enumerate}
subject to the following relations:
\begin{enumerate}[(A)]
\item \begin{enumerate}[(i)]
\item For $s, s'$ distinct elements of $\BS$ with $s \cdot s'$ of order $k(s,s')$, 
\[ \underbrace{\mathbbm{1}_{I_n m(s) I_n}*\mathbbm{1}_{I_n m(s') I_n} *\cdots}_{k(s, s') \text{ factors}} = \underbrace{\mathbbm{1}_{I_n m(s') I_n}*\mathbbm{1}_{I_n m(s) I_n} *\cdots.}_{k(s, s') \text{factors}} \]
\item For $s \in \BS$, $\mathbbm{1}_{I_n m(s) I_n} * \mathbbm{1}_{I_n m(s) I_n} * \mathbbm{1}_{I_n m(s)^2 I_n} = q^{\breve{l}(s)} \displaystyle{\sum_{x \in P_{s,n}/I_n} \mathbbm{1}_{I_n x I_n}}.$
\end{enumerate}
\smallskip
\item
\begin{enumerate}[(i)]
\item For $\tau, \tau' \in \Omega_\al$, $\mathbbm{1}_{I_n m(\tau) I_n} * \mathbbm{1}_{I_n m(\tau') I_n} = \mathbbm{1}_{I_n c_{\tau, \tau'} I_n}* \mathbbm{1}_{I_n m(\tau\tau') I_n}$. 
\item For $\tau \in \Omega_\al$ and $s \in \BS$, $$\mathbbm{1}_{I_n m(\tau) I_n} * \mathbbm{1}_{I_n m(s) I_n} *\mathbbm{1}_{I_n m(\tau\i) I_n} *  \mathbbm{1}_{I_n c_{\t, \tau\i} I_n} = \mathbbm{1}_{I_n c_{\tau,s }I_n}* \mathbbm{1}_{I_n m(\tau s \tau^{-1}) I_n}.$$
\item For $\tau \in \Omega_\al$ and $g \in I$, $$\mathbbm{1}_{I_n m(\tau) I_n} * \mathbbm{1}_{I_n g I_n} * \mathbbm{1}_{I_n m(\tau\i) I_n} *  \mathbbm{1}_{I_n (c_{\t, \tau\i})\i I_n} = \mathbbm{1}_{I_n m(\tau)g m(\tau)\i I_n}.$$
\end{enumerate}
\smallskip
\item \begin{enumerate}[(i)]
\item $\mathbbm{1}_{I_n}$ is the identity element of $\CH(G(F), I_n)$.
\item For $g,g' \in I$, $\mathbbm{1}_{I_n g I_n}* \mathbbm{1}_{I_n g' I_n} = \mathbbm{1}_{I_n gg' I_n}.$
\item For $s \in \BS$ and for $g \in I \cap m(s)Im(s)^{-1}$, 
\[ \mathbbm{1}_{I_n m(s) I_n}* \mathbbm{1}_{I_n g I_n} = \mathbbm{1}_{I_n m(s)gm(s)^{-1} I_n} *  \mathbbm{1}_{I_n m(s) I_n}\]
\item For $s \in \BS$ and for $g \in I \backslash (I\cap m(s)Im(s)^{-1})$, let $g_1, g_2$ are elements of $I$ such that $m(s)g m(s)^{-1} =g_1 m(s) g_2$. Then 
\[ \mathbbm{1}_{I_n m(s) I_n}* \mathbbm{1}_{I_n g I_n} * \mathbbm{1}_{I_n m(s) I_n} * \mathbbm{1}_{I_n m(s)^2 I_n}= q^{\breve l(s)} (\mathbbm{1}_{I_n g_1 I_n} *  \mathbbm{1}_{I_n m(s) I_n}* \mathbbm{1}_{I_n g_2 I_n}).\]
\end{enumerate}
\end{enumerate}
\end{theorem}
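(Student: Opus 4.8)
Let me denote by $\mathfrak{H}$ the abstract $\BZ$-algebra defined by the displayed generators (1)--(3) and relations (A)--(C). There is an obvious $\BZ$-algebra map $\rho\colon\mathfrak{H}\to\CH_n$ sending each abstract generator to the corresponding characteristic function; it is well defined once the relations are checked to hold in $\CH_n$, and it is surjective by Corollary \ref{gen}. The whole content of the theorem is that $\rho$ is an isomorphism, and the plan is to produce a two-sided inverse, not directly, but by exploiting the already-established Iwahori--Matsumoto presentation of Theorem \ref{mult}.

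First I would check that the relations hold in $\CH_n$. Relation (A)(i) will follow from the braid relations satisfied by the Tits cross-section: by condition (2)(b) of Definition \ref{def:tits2} applied to the two reduced expressions $ss's\cdots$ and $s'ss'\cdots$ (each with $k(s,s')$ factors), both sides are $m(w)$ for the common element $w$, and since every partial product of a reduced word has additive length, Theorem \ref{mult}(1) turns each side into $\mathbbm{1}_{I_n m(w)I_n}$. Relations (B)(i)--(iii) and (C)(i)--(iii) are then immediate: in each case the underlying identity in $G(F)$ holds on the nose --- for (B) this is just the definitions of $c_{\tau,\tau'}$ and $c_{\tau,s}$, the identity $m(\tau^{-1})=m(\tau)^{-1}c_{\tau,\tau^{-1}}$, and the fact that $m(\tau)$ normalizes $I$ hence the characteristic subgroup $I_n$; for (C)(ii)--(iii) it is the Tits axiom (T3) recalled just before the theorem --- and all products in sight are length-additive, so Theorem \ref{mult}(1) upgrades the group identity to the Hecke identity. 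Relations (A)(ii) and (C)(iv) are where the length-drop case of Theorem \ref{mult}(2) enters: since $m(s)^2=b^\vee(-1)\in S_2$ lies in $I$ and has order dividing $2$, Theorem \ref{mult}(2) gives $\mathbbm{1}_{I_n m(s)I_n}*\mathbbm{1}_{I_n m(s)I_n}=q^{\breve\ell(s)}\sum_{u\in P_{s,n}/I_n}\mathbbm{1}_{I_n u m(s)^2 I_n}$, and right-multiplying by $\mathbbm{1}_{I_n m(s)^2I_n}$ together with $(m(s)^2)^2=1$ yields (A)(ii); relation (C)(iv) is the same computation after pushing $g$ through $m(s)$ and invoking (T3) to write $m(s)gm(s)^{-1}=g_1m(s)g_2$. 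Here Lemma \ref{gg-s} (which identifies $P_{s,n}=I_n m(s)I_n m(s)^{-1}$) is what recognizes the index set of the sum.

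Next I would build an inverse $\Phi\colon\CH_n\to\mathfrak{H}$ out of the presentation of Theorem \ref{mult}: it suffices to assign to each $g\in G(F)$ an element $\Phi(\mathbbm{1}_{I_n g I_n})\in\mathfrak{H}$ depending only on the double coset $I_n g I_n$, and then to verify relations (1) and (2) of Theorem \ref{mult}. Given $g$, pick a reduced expression $w=s_{i_1}\cdots s_{i_k}\tau$ of $w:=\pi(g)$, write $g=h\,m(w)\,h'$ with $h,h'\in I$ (possible since $\pi(m(w))=w$), and set $\Phi(\mathbbm{1}_{I_n g I_n})=\mathbbm{1}_{I_n hI_n}*\mathbbm{1}_{I_n m(s_{i_1})I_n}*\cdots*\mathbbm{1}_{I_n m(s_{i_k})I_n}*\mathbbm{1}_{I_n m(\tau)I_n}*\mathbbm{1}_{I_n h'I_n}$ in $\mathfrak{H}$. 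Independence of the reduced expression is Matsumoto's word lemma applied to the braid relations (A)(i), the $\Omega_\al$-component $\tau$ of $w$ being intrinsic. The essential point is independence of everything but the double coset $I_n g I_n$, which I would prove by induction on $\ell(w)$. For $\ell(w)=0$, i.e. $w=\tau\in\Omega_\al$, relation (B)(iii) slides $h'$ to the left of $m(\tau)$ (it becomes $m(\tau)h'm(\tau)^{-1}$ up to a factor in $S_2$ absorbed by (C)(ii)), reducing the expression to $\mathbbm{1}_{I_n h''I_n}*\mathbbm{1}_{I_n m(\tau)I_n}$ with $h''\in I$; since $m(\tau)$ normalizes $I_n$ one has $I_n g I_n=(I_n h'' I_n)\,m(\tau)$, so this depends only on $I_n h'' I_n$, hence on $I_n g I_n$. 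For $\ell(w)\ge1$, writing $w=sw'$ with $s\in\BS$ and $\ell(w)=\ell(w')+1$, the factorization $I\,m(w)\,I=I\,m(s)\,I\,m(w')\,I$ reduces the ambiguity in a decomposition $g=p\,m(s)\,q$ ($p\in I$, $q\in I\,m(w')\,I$) to the inductive hypothesis for $w'$ together with relations (C)(iii) and (C)(iv) (and (A)(ii)), which describe precisely how $\mathbbm{1}_{I_n m(s)I_n}$ absorbs $\mathbbm{1}_{I_n pI_n}$ according to whether $p\in I\cap m(s)Im(s)^{-1}$ or not. With $\Phi$ in hand, relation (1) of Theorem \ref{mult} is derived by putting $\Phi(\mathbbm{1}_{I_n gI_n})*\Phi(\mathbbm{1}_{I_n g'I_n})$ into this normal form --- move the $m(\tau)$ coming from $\pi(g)$ past the simple factors of $\pi(g')$ by (B)(ii), merge the two $\Omega_\al$-parts by (B)(i), and reduce the word in the $m(s_i)$'s by (A)(i) --- and relation (2) in the length-decreasing case by the same bookkeeping followed by (A)(ii) and (C)(iv). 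Finally $\rho\circ\Phi$ and $\Phi\circ\rho$ are the identity on the respective generating sets, so $\rho$ is an isomorphism.

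I expect the main obstacle to be exactly the well-definedness step in the construction of $\Phi$ --- equivalently, verifying that relations (A)--(C) are strong enough to recover relations (0)--(2) of Theorem \ref{mult}. The delicate situation is the inductive step where right multiplication by $m(s)$ shortens the length, so that several $I_n$-double cosets of $I\,m(w')\,I$ collapse inside $I\,m(sw')\,I$; reconciling the two normal forms requires the precise structure of the groups $P_{s,n}$ together with the identity $P_{s,n}=I_n m(s)I_n m(s)^{-1}$ (Lemmas \ref{normal} and \ref{gg-s}), and this is precisely where the ``corrected'' relations (A)(ii) and (C)(iv) --- with their factor $q^{\breve\ell(s)}$ and their $\mathbbm{1}_{I_n m(s)^2I_n}$ term --- are indispensable. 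Interlocked with this is the descent of the length-zero part of $W$: handling $\Omega_\al$ is what forces the conjugation relations (B)(i)--(iii) and the structure constants $c_{\tau,\tau'}$, $c_{\tau,s}$ into the presentation in the first place, and some care is needed to show that these $\Omega_\al$-manipulations commute with the reductions in the affine part.
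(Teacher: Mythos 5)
Your approach --- define the abstract algebra $\mathfrak{H}$ on generators (1)--(3) with relations (A)--(C), obtain the surjection $\rho\colon\mathfrak{H}\to\CH_n$ from Theorem \ref{mult} and Corollary \ref{gen}, and then invert it by assigning to each $I_n$-double coset a normal form $\mathbbm{1}_{I_n h I_n}*\mathbbm{1}_{I_n m(w) I_n}*\mathbbm{1}_{I_n h' I_n}$ and verifying relations (0)--(2) of Theorem \ref{mult} hold abstractly --- is exactly the paper's strategy, and your bookkeeping of which relations feed into which of (0)--(2) is accurate. One small miscalibration in your anticipated obstacles: the well-definedness of the normal form (the paper's statement (a), that $\HI_{I_n x I_n}*\HI_{I_n m(w) I_n}*\HI_{I_n y I_n}$ depends only on $xm(w)y$) does not in fact need $P_{s,n}$, Lemmas \ref{normal}, \ref{gg-s}, or the ``corrected'' relations (A)(ii), (C)(iv) --- because $yy_1^{-1}=m(w)^{-1}(x^{-1}x_1)m(w)$ lands automatically in $I\cap m(w)^{-1}Im(w)$, one only needs (B)(iii) and (C)(i)--(iii) to push it past $m(w)$ one simple factor at a time; the length-drop machinery you correctly flag as delicate enters only later, when verifying relation (2) of Theorem \ref{mult}.
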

\begin{proof}
Let $\hat \CH_n$ be the quotient of the free $\BZ$-algebra generated by the elements (1) - (3), by the subalgebra generated by relations (A) - (C) stated in the theorem. For clarity, to distinguish the elements of $\hat \CH_n$ from the elements of $\CH_n$, we denote the generators of $\hat \CH_n$ as $\HI_{I_n m(s) I_n}, s\in \BS$, $\HI_{I_n m(\tau) I_n}, \tau\in \Omega_\al$, $\HI_{I_n g I_n}, g \in I$.  By Theorem \ref{mult}, the relations (A)-(C) in Theorem \ref{thm:howe-tits} are satisfied for $\CH_n$. Thus we have an algebra homomorphism $\hat \CH_n \rightarrow \CH_n$. This map is surjective by Corollary \ref{gen}.  

For $w \in W$, write $w  = w_1 \tau$ for $w_1 \in W_{\af}$ and $\tau \in \Omega_\al$. 
Let $m(w_1) = m(s_{i_1}) \cdots m(s_{i_c})$ for a chosen reduced expression of $w_1$ and let $m(w) = m(w_1)m(\tau) \in \CT$. Define $\mathbbm{\hat 1}_{I_nm(w)I_n}= \HI_{I_n m(s_{i_1})I_n}*  \cdots \HI_{I_n m(s_{i_c})I_n} * \HI_{I_n m(\tau)I_n}$. Then relation (A)(i) implies that this expression is independent of the choice of reduced expression for $w_1$. For $g \in G(F)$, write $g = x m(w) y$ for $w \in W$ and $x, y \in I$. Define $\mathbbm{\hat 1}_{I_n g I_n}= \HI_{I_n x I_n} *\HI_{I_nm(w)I_n} *\HI_{I_n y I_n}$. 

We show that 

(a) Let $w \in W$ and $x, y, x_1, y_1 \in I$ such that $x m(w) y=x_1 m(w) y_1$. Then  \[\HI_{I_nxI_n} *\mathbbm{\hat 1}_{I_nm(w)I_n} *\HI_{I_n yI_n} =  \HI_{I_nx_1I_n} *\HI_{I_nm(w)I_n}*\HI_{I_ny_1I_n}.\]

Since $yy_1^{-1} = m(w)^{-1}(x^{-1}x_1)m(w)$,  $yy_1^{-1} \in \displaystyle{I \cap  m(w)^{-1}I m(w)}$. Therefore,
 \begin{align*}
\HI_{I_n m(w)I_n} * \HI_{I_nyy_1^{-1}I_n}  =  \HI_{I_n m(s_{i_1})I_n}* \ldots \HI_{I_n m(s_{i_c})I_n} * \HI_{I_n m(\tau) yy_1^{-1}m(\tau)\i I_n} * \HI_{I_n m(\tau)I_n}.
\end{align*}
using relations B(iii) and C(i). It is easy to check that
$a \in  \Ad (m(s_{i_1})\ldots  m(s_{i_c}))^{-1} (I) \cap I$  implies that  $a \in \Ad (m(s_{i_c}))( I) \cap I$ and  $\Ad (m(s_{i_c}))(a) \in I \cap \Ad (m(s_{i_1})\ldots  m(s_{i_{c-1}}))^{-1} (I).$  
Using the above and relation C(iii) repeatedly, we have 
\begin{align*}
\HI_{I_nm(w)I_n} * \HI_{I_nyy_1^{-1}I_n} & =\HI_{I_n m(s_{i_1})I_n}* \HI_{I_n m(s_{i_2})I_n}*\ldots.  \HI_{I_n m(s_{i_c})m(\tau) yy_1\i m(\tau)\i m(s_{i_c})\i I_n}*\\
& \quad *  \HI_{I_n m(s_{i_c})I_n}* \HI_{I_n m(\tau)I_n}\\
& = \HI_{I_n m(w) yy_1\i m(w)\i I_n}* \HI_{I_nm(w)I_n}\\
& = \HI_{I_nx^{-1}x_1I_n} * \HI_{I_nm(w)I_n}.
\end{align*}
Now (a) follows from C(i) and C(ii).

In particular, the element $\mathbbm{\hat 1}_{I_n g I_n}$ is well-defined. 

We prove that Relation (0) of Theorem \ref{mult} holds for $\hat \CH_n$. Let $g=x m(w) y$ with $x, y \in I$ and $g' \in I_n g I_n$. Then $g'=x' m(w) y'$ for some $x', y' \in I$ with $x' \in I_n x$ and $y' \in y I_n$. By (a) and relation (C)(i), (ii), we have 
\[\mathbbm{\hat 1}_{I_n g I_n}= \HI_{I_n x I_n} *\HI_{I_nm(w)I_n} *\HI_{I_n y I_n}=\HI_{I_n x' I_n} *\HI_{I_nm(w)I_n} *\HI_{I_n y' I_n}=\mathbbm{\hat 1}_{I_n g' I_n}.
\]

We prove that Relation (1) of Theorem \ref{mult} holds for $\hat \CH_n$. We need to show that if $l(\pi(gg')) = l(\pi(g)) + l(\pi(g'))$, then
\begin{align}\label{lade}
    \HI_{I_n g I_n} * \HI_{I_ng'I_n} = \HI_{I_ngg'I_n}.
\end{align}
  To prove this claim, we may easily reduce ourselves to the case when $l(\pi(g)) \leq 1$. If $l(\pi(g))=0$, then \eqref{lade} follows from relations B(i), B(ii), B(iii), C(i) and C(ii). If $l(\pi(g) = 1$, we may assume $g = m(s)$ for a suitable $s \in \BS$. Let $\pi(g') = w$. Write $g' = x' m(w) y'$ for $x', y' \in I$. Then $gg' = m(s)x'm(w)y'$. Since $l(sw) = l(s)+1$, we see that $m(s) I m(w) \subset I m(s)m(w) I$. In particular, $I \subset m(s)\i I m(s) m(w) I m(w)\i$. Write $x'= x_1'x_2'$ for $x_1' \in  m(s)\i I m(s)$ and $x_2' \in m(w) I m(w)\i$. Then \[gg' = m(s) x_1' m(s)\i m(s) m(w) m(w)\i x_2' m(w) y'\] and
  \begin{align*}
      \HI_{I_n m(s) g' I_n} &= \HI_{I_n m(s) x_1 m(s)\i I_n} *\HI_{I_n m(s) m(w) I_n} * \HI_{I_n m(w)\i x_2 m(w) y' I_n}.
  \end{align*}
In this case, \eqref{lade} holds using relations C(i), C(ii) and C(iii). 

We prove that relation (2) of Theorem \ref{mult} holds for $\hat \CH_n$. We may assume $g = m(s)$. Let $\pi(g') = w$. In this case, $l(sw)<l(w)$. Write $g'= x'm(w)y'$. Then $m(s)x'm(w) \in I m(w) I \sqcup I m(s) m(w) I$. Now $m(s)x'm(w) \in I m(s) m(w) I$ if and only if $\pi(gg') <\pi(g')$. Further, writing $x'= x_1'x_2'$ for $x_1' \in  m(s)\i I m(s)$ and $x_2' \in m(w) I m(w)\i$, using C(iii) and C(ii), we have
\begin{align}\label{R2A}
      \HI_{I_n m(s)  I_n} * \HI_{I_n x' I_n} * \HI_{I_n m(w) I_n } * \HI_{I_n y' I_n }&= \HI_{I_n m(s) x_1 m(s)\i I_n} *\HI_{I_n m(s) I_n}\\& * \HI_{I_n m(w) I_n} * \HI_{I_n m(w)\i x_2 m(w) y' I_n}. \nonumber
  \end{align}
  Since $l(sw)<l(w)$, we have $w = sw'$ for a suitable $w' \in W$. Then $\HI_{I_n m(w) I_n } = \HI_{I_n m(s) I_n }* \HI_{I_n m(w_1) I_n }$. Using this in \eqref{R2A} and using Relation A(ii) finishes the proof of Relation (2) when $\pi(gg') <\pi(g')$. Next,  $m(s)x'm(w) \in I m(w) I$ if and only if $\pi(gg') = \pi(g')$. In this case, we may write $x' = x_1' m(s)\i x_2'$ where $x_1' \in m(s)\i I m(s)$ and $x_2' \in m(w) I m(w)\i$. Then 
 \[m(s)g' = (m(s) x_1'm(s)\i) m(w) (m(w)\i x_2' m(w) y')\]
  Now
  \begin{align*}
       \HI_{I_n m(s)  I_n} * \HI_{I_n g' I_n } &= q^{l(s)} (\HI_{I_n m(s) x_1' m(s)\i I_n} *\HI_{I_n x_2' I_n} * \HI_{I_n m(w) I_n } * \HI_{I_n y' I_n })\\
       &=q^{l(s)}( \HI_{I_n m(s) x_1 m(s)\i I_n} * \HI_{I_n m(w) I_n } * \HI_{I_n m(w)\i x_2' m(w) I_n} *\HI_{I_n y' I_n})\\
       & = q^{l(s)} (\HI_{I_n m(s) g' I_n}).
  \end{align*}
  In the above, the first equality uses C(iv) and C(i), the second one uses C(iii) and C(i).
  
  We have verified that relations (0) - (2) of Theorem \ref{mult} hold for $\hat \CH_n$. This concludes the proof of the theorem.
  \end{proof}

\section{Tits groups over $\breve F$}\label{sec:TGbF}

\subsection{The Tits group of the relative Weyl group over $\bF$}

We begin with a discussion on the Tits group of the relative Weyl group of $G$ over $\bF$, which is probably well-known, but a reference discussing this does not seem to be available in literature. 

Let $G$ be a connected, reductive group over $F$. In this section, we prove the existence of the Tits group of a finite relative Weyl group of $G_\bF$. 

We consider a Steinberg pinning $(x_\ta)_{\ta \in \tilde\Delta_0}$ of $G_\tF$ relative to $S$ (see \cite[\S 4.1.3]{BT2}). It has the following properties: 
\begin{enumerate}[(1)]
\item $x_\ta: \BG_a \rightarrow U_\ta$ is a $\tF$-isomorphism.
\item For each $\ta \in \tilde\Delta_0$ and $\gamma \in \Gal(\tF/\bF)$, $x_{\gamma(\ta)} = \gamma \circ x_\ta\circ \gamma^{-1}$.
\end{enumerate}
This extends to a Chevalley-Steinberg system of pinnings $x_\ta: \BG_a \xrightarrow{\cong} U_\ta$ for all $\ta \in \tilde\Phi(G,T)$, which is compatible with the action of $\Gal(\tF/\bF)$.  From this, we get a set of pinnings for $\ba \in \breve\Phi(G,S)$, which we briefly recall. Let $U_\ba$ be the root subgroup of the root $a$. When $2a$ is not a root, we have $x_\ba: \Res_{\bF_{\ba}/\bF}\BG_a \xrightarrow{\cong}   U_\ba$. 
If $2\ba$ is a root, let $H_0(\bF_\ba, \bF_{2\ba}) = \{(u,v) \in \bF_\ba \times \bF_\ba\;|\; u \cdot \gamma(u) = v+\gamma_0(v)\}$, where $\gamma_0$ is the non-trivial $\bF_{2\ba}$-automorphism of $\bF_\ba$. We have $$x_\ba:  Res_{\bF_{2\ba}/\bF} H_0(\bF_\ba, \bF_{2\ba})\xrightarrow \cong U_\ba.$$

For any $\ta \in \tilde\Delta_0$, let 
\begin{align}\label{tilderep}
n_{s_\ta}:=x_\ta(1) x_{-\ta}(1) x_{\ta}(1).
\end{align}
We note here that we have used the convention of \cite[\S 4.1.5]{BT2}, and $n_{s_\ta} \in N_G(S)(\bF)$ (This is different from the convention used in \cite{Spr09} where $n_{s_\ta}:=x_\ta(1) x_{-\ta}(-1) x_{\ta}(1)$).

Now, let $\ba \in \breve \D_0$. If $2\ba$ is not a root, we set
\begin{align}\label{Rep1}
n_{s_\ba} := x_\ba(1)x_{-\ba}(1)x_\ba(1).
\end{align}

Next, suppose $2\ba$ is a root. By \cite[ Chapter V, \S 4,  Proposition 7]{Ser79}, there exists $c \in \bF_\ba$ such that $c \gamma_0(c)=2$. Set \begin{equation}\label{Rep2'}
    n_{s_\ba} = x_\ba(c,1) x_{-\ba}(c,1)x_\ba(c,1).
\end{equation}

Note that if the residue characteristic of $F$ is not 2, such a $c$ in fact lies in $\fkO_\bF^\times$, and  when the characteristic of $F$ is 2, $c =0$. By \cite[\S 4.1.11]{BT2}, we have 
\begin{align}\label{Rep2}
n_{s_\ba}= \prod n_{s_\ta}^{-1} n_{s_{\ta'}}n_{s_{\ta}}^{-1}.
\end{align}
where the product is indexed by the family of sets $\{\ta, \ta'\}$ with $\ta, \ta' \in \tilde\Phi(G,T)$ such that $\ta+\ta'$ is a root and $\ta|_S =\ta'|_S = \ba$.

Let $\breve \CT_{\fin}$ be the group generated by the elements $\{n_{s_\ba}\;|\; \ba \in \breve\Delta_0\}$. Note that $S_2 = \langle \ba^\vee(-1)\;|\; \ba \in \breve \Delta_0\rangle$ is contained in $\CT_\fin$. Then the elements $\{n_{s_{\ba}}\;|\; \ba \in \breve\Delta_0\}$ satisfy Coxeter relations is a consequence of \cite[\S 6.1.3]{BT1} (see \cite[Proposition IV.6]{AHHV}). Furthermore, we have that 
\[n_{s_\ba}^2 =\begin{cases} \ba^\vee(-1), & \text{ if $2\ba$ is not a root}; \\ 1, & \text{ if $2\ba$ is a root}.\end{cases}\] 
Let $\breve S_2 = \langle \ba^\vee(-1)\; |\; \ba \in \breve\Phi(G,S) \rangle$. Then $\CT_{\fin}$ fits into a short exact sequence
\[1 \rightarrow  \breve S_2 \rightarrow \breve\CT_{\fin} \rightarrow W(G,S) \rightarrow 1. \]

\subsection{An example of $G_\bF$ for which $\breve\CT$ does not exist}\label{sec:example} In this subsection, we will give an example of a  wildly ramified unitary group over $\bF$ for which the Tits group $\breve\CT$ does not exist.

Let $F = \BQ_2, \bF = \BQ_2^{\text{un}}, \tF = \bF(\sqrt-1)$. Then $\tF$ is a wildly ramified quadratic extension of $\bF$.  Let $G$ be a connected reductive group over $\bF$ with $G_\bF = \U_6\subset \Res_{\tF/\bF}\GL_6$. Let $\gamma$ denote the generator of $\Gal(\tF/\bF)$.  Then 
\begin{gather*} \breve\Phi(G,S) = \{\pm e_i \pm e_j\;|\; 1\leq i<j\leq 3\} \cup \{\pm2 e_i\;|\; 1 \leq i \leq 3\}, \\ 
\breve\Phi_{\af}(G,S) = \{ \pm e_i \pm e_j +\frac{1}{2}\BZ\;|\;1 \leq i<j \leq 3\} \cup \{\pm 2e_i+\BZ\;|\; 1 \leq i\leq 3\}. 
\end{gather*}
The hyperplanes with respect to the roots $\ba_1:= e_1-e_2, \ba_2:=e_2-e_3, \ba_3:= 2e_3, \ba_0:=-e_1 - e_2 +\frac{1}{2}$ form a alcove in $\CA(S,\bF)$ which we denote as $\bal$. Let $\breve\Delta_0 =\{\ba_i\;|\; 1\leq i \leq 3\}$ and $\breve\Delta = \breve\Delta_0 \cup \{ \ba_0\}$. Let $\bs_i = s_{\ba_i}, 0 \leq i \leq 3$. 

Suppose $\breve\CT$ can be defined. Then $\breve\CT$ contains representatives $m(\bs_i), 0 \leq i\leq 3$ that satisfy Coxeter relations and such that $m(\bs_i)^2 = (\bb_i)_*^\vee(-1) \in \breve S_2$.  

The Coxeter relations involving the reflection $\bs_0$ are $\bs_0\bs_1= \bs_1\bs_0$, $\bs_0\bs_2\bs_0= \bs_2\bs_0\bs_2$ and $\bs_0\bs_3=\bs_3\bs_0$. 
Additionally, we have $\bs_0^2=1$.

Let $t \in T(\bF)$. We may write $t = \diag(d_1, d_2, d_3, \gamma(d_3)\i, \gamma(d_2)\i, \gamma(d_1)\i)$. Then 
\begin{align*}
    \bs_1(t) &= \diag(d_2, d_1, d_3, \gamma(d_3)\i, \gamma(d_1)\i, \gamma(d_2)\i),\\
    \bs_2(t) &=  \diag(d_1, d_3, d_2, \gamma(d_2)\i, \gamma(d_3)\i, \gamma(d_1)\i),\\
    \bs_3(t) &= \diag(d_1, d_2, \gamma(d_3)\i, d_3, \gamma(d_2)\i, \gamma(d_1)\i),\\
    \bs_0(t) &= \diag(\gamma(d_2)\i, \gamma(d_1)\i, d_3, \gamma(d_3)\i, d_1, d_2).
\end{align*}
To see the last equality, we note that a reduced expression for the image of $\bs_0$ in $W(G,S)$ is $\bs_2\bs_3\bs_2\bs_1\bs_2\bs_3\bs_2$. This equality can also be seen by noting that the image of $\bs_0$ in $W(G,S) \subset W(G,T)$ represents the permutation $(1,5)(2,6)$ in the symmetric group $S_6$. 

For $\sigma$ a permutation in the symmetric group $S_6$, let $g_\sigma$ denote the corresponding permutation matrix in $\GL_6(\tF)$ whose entries are all 0 or 1. The element $m(\bs_0) \in \U_6(\bF) \subset \GL_6(\tF)$ can be written as a product $t_0 \cdot g_{(1,5)(2,6)}$  where $$t_0 = \diag(d_1, d_2, d_3, \gamma(d_3)\i, \gamma(d_2)\i, \gamma(d_1)\i).$$

Now $m(\bs_0)^2 \in \breve S_2$ implies that $d_2 = \pm \gamma(d_1)$. Next, the relation $m(\bs_0)m(\bs_1) = m(\bs_1)m(\bs_0)$ implies that $t_0 g_{(1,5)(2,6)} = \bs_1(t_0) m(\bs_1) g_{(1,5)(2,6)} m(\bs_1)\i$. So \[\bs_1(t_0) t_0\i =  g_{(1,5)(2,6)} m(\bs_1)  g_{(1,5)(2,6)}m(\bs_1)\i.\] Since the right side of the expression above maps to the trivial element of $\bW$, it is an element of $\breve S_2$; in particular, it is a diagonal matrix with entries $\pm 1$. Since \[\bs_1(t_0)t_0^{-1} = \diag(\pm \gamma(d_1) d_1\i, \pm d_1 \gamma(d_1)\i, 1,1,\pm \gamma(d_1) d_1\i,\pm d_1 \gamma(d_1)\i)\] we have that $\gamma(d_1) = \pm d_1$. Similarly, $m(\bs_0) m(\bs_3) = m(\bs_3) m(\bs_0)$ implies that $d_3 =\pm 1$. The action of $m(\bs_0)$  on $V$ is given by $s_{e_1+e_2}(v) - val(d_1)(e_1+e_2)^\vee(v)$, where $val$ is normalized so that $val(\bF)=\BZ$. Hence $val(d_1) = \frac{1}{2}$ (since $\bs_0$ is the reflection with respect to the vanishing hyperplane of the affine root $-e_1-e_2 + \frac{1}{2}$). Hence $\gamma(d_1) \neq d_1$. So $\gamma(d_1) = -d_1$. 

In conclusion, the assumption that $\breve\CT$ can be defined implies that there exists $d_1 \in \tF$ with $val(d_1)=\frac{1}{2}$ and $Tr_{\tF/\bF}(d_1) = 0$. 
However, there does not exist an element of $\tF$ with valuation $\frac{1}{2}$ and with trace 0. To see this, note that any element of $\tF$ is of the form $a+b \sqrt -1$ for $a, b \in \bF$. Then $\gamma(a+b\sqrt -1) = a - b \sqrt -1$. If $\gamma(a+b\sqrt -1) = -a-b\sqrt -1$ then $a=0$. Further $\sqrt -1$ is a unit in $\tF$, so $val( b\sqrt -1) = val(b) \in \BZ$. 

This gives a contradiction and proves that $\breve\CT$ cannot be defined. 

\smallskip

In the rest of this section, we will show that the Tits group of $\bW$ exists if $G$ splits over $\breve F$. 

\subsection{Affine pinning}\label{Affpinning} Recall that we have chosen a special vertex $\bv_0$ and we view $\bal \subset V$. For each reflection $\bs$ in $\bBS$ there is a unique affine root such that the reflection in the hyperplane with respect to this affine root is $\bs$. Let $\breve\Delta$ denote this collection of affine roots. The Weyl chamber in $V$ that contains $\bal$ determines a set of simple roots of $\breve\Phi(G,T)$ which we denote as $\breve\Delta_0$. Note that $\breve\Delta_0 \subset \breve\Delta$. 

We consider a pinning $\{x_\ba\}_{\ba \in \tilde\Delta_0}$ of $G_\bF$ relative to $T$ (see \cite[\S 4.1.3]{BT2}), that is, for each $\ba \in \breve\Delta_0$, we fix a $\bF$-isomorphism $x_\ba: \BG_a \rightarrow U_\ba$, where $U_\ba$ is the root subgroup of the root $\ba$. 
This extends to a Chevalley system of pinnings $x_\ba: \BG_a \xrightarrow{\cong} U_\ba$ for all $\ba \in \breve\Phi(G,T)$.

Let $\ba \in \breve\Delta \backslash \breve\Delta_0$ and let $\bb\in \breve\Phi(G,T)$ be the gradient of $\ba$. 
Let $m_{\varpi_F}: \BG_a \rightarrow  \BG_a$ denote $\bF$-morphism given by multiplication by $\varpi_{F}$, where $\varpi_F$ is a uniformizer of $F$. Define $x_{\ba}:= x_{\bb} \circ m_{\varpi_{F}}$. Note that $x_{\ba}$ is a $\bF$-isomorphism from $\BG_a$ to $U_{\bb}$.

The set $\{x_\ba\;|\; \ba \in \breve\Delta\}$ is called an \textit{affine pinning} of $G_{\bF}$. For $\ba \in \breve \D$ define 
\begin{align}\label{Rep}
n_{s_\ba} := x_\ba(1)x_{-\ba}(1)x_\ba(1).
\end{align}
We note here again that we use the convention of \cite[\S 4.1.5]{BT2} and $n_{s_\ba} \in N_G(T)(\bF)$. 

\begin{lemma}\label{TitsWaf}
The set $\{n_\bs\;|\; \bs \in \bBS\}$ satisfies the Coxeter relations.  
\end{lemma}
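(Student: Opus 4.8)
## Proof proposal for Lemma \ref{TitsWaf}

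The plan is to reduce the Coxeter relations for $\{n_\bs \mid \bs \in \bBS\}$ to those already known for the finite Tits group, by working inside the simply connected cover and exploiting the structure of rank-$2$ parahoric subgroups. First I would recall that it suffices to verify, for each pair of distinct affine simple reflections $\bs = s_\ba$, $\bs' = s_{\ba'}$ with $\ba, \ba' \in \breve\Delta$, the braid relation
\[
\underbrace{n_\bs n_{\bs'} n_\bs \cdots}_{k(\bs,\bs') \text{ factors}} = \underbrace{n_{\bs'} n_\bs n_{\bs'} \cdots}_{k(\bs,\bs') \text{ factors}},
\]
where $k(\bs,\bs')$ is the order of $\bs\bs'$ in $\bW_{\af}$. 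Since $G$ splits over $\bF$, each affine root subgroup $U_\ba$ (for $\ba \in \breve\Delta$) is isomorphic to $\BG_a$ via the chosen affine pinning $x_\ba$, so $n_{s_\ba} = x_\ba(1) x_{-\ba}(1) x_\ba(1)$ is literally of the shape used by Tits in \cite{Ti} for a pinned split group. The point is that the pair $\{\ba, \ba'\}$ spans a rank-$2$ affine (hence finite, since $\bW_\CX$ must be finite for the relevant facet) root subsystem, and the relation takes place inside the corresponding rank-$2$ parahoric.

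The key steps, in order, are as follows. First I would pass to $G_\Sc$: lifting to the simply connected cover does not change the unipotent root subgroups or the elements $n_{s_\ba}$, and it lets me assume $G$ is semisimple simply connected. Next, I would fix a pair $\ba, \ba' \in \breve\Delta$ and consider the facet $\bal_{\{\bs,\bs'\}}$ of $\bal$ fixed by $\langle \bs, \bs'\rangle$; the associated parahoric $\bP$ has reductive quotient $\bar{\bP}$ a rank-$2$ split connected reductive group over $\bar\kk$, whose root system has simple roots given by the images of $\ba$ and $\ba'$ and whose Weyl group is exactly $\langle \bs,\bs'\rangle \cong$ the dihedral group of order $2k(\bs,\bs')$. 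Under the reduction map $\bP \to \bar{\bP}(\bar\kk)$, the affine pinning $\{x_\ba, x_{\ba'}\}$ maps to an honest pinning of $\bar{\bP}$, and $n_{s_\ba}, n_{s_{\ba'}}$ map to the corresponding Tits lifts $\bar n_{s_\ba}, \bar n_{s_{\ba'}}$ in $\bar{\bP}(\bar\kk)$. Property (2) of the finite Tits group (stated in \S\ref{tits}, via \cite{Ti} and \cite[Proposition IV.6]{AHHV}) gives the braid relation for $\bar n_{s_\ba}, \bar n_{s_{\ba'}}$. Finally, since both sides of the braid relation are elements of $N_G(T)(\bF) \cap \bP$ lying over the same element of $\bW_\af$ (namely the longest element of $\langle\bs,\bs'\rangle$, or the appropriate truncation), their difference lies in $T(\bF)_1 = \ker(\bP_T \to \bar{\bP}_T)$ intersected with the group generated by the $\ba^\vee(-1)$'s — but the computation is really cleaner to run entirely inside $\bar{\bP}(\bar\kk)$: both sides reduce to the same element of $\bar{\bP}(\bar\kk)$, and because each side is a word in the $x_{\pm\ba}(1), x_{\pm\ba'}(1)$ whose entries lie in $\{0,1\} \subset \fkO_\bF$ (no higher-order terms, as $G$ splits over $\bF$ and the pinning has been rescaled by $\varpi_F$ precisely so that the affine roots behave like ordinary roots at the relevant facet), the reduction map is injective on the relevant finite set of such words, so equality downstairs forces equality upstairs.

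Alternatively — and this is perhaps the cleanest route — I would invoke \cite[\S 6.1.3]{BT1} and \cite[\S 4.1.11]{BT2} directly: the affine pinning realizes each rank-$2$ parabolic subsystem as arising from a split rank-$2$ group (possibly $\mathrm{SL}_2$, $\mathrm{SL}_2 \times \mathrm{SL}_2$, $\mathrm{SL}_3$, $\mathrm{Sp}_4$, or $\mathrm{G}_2$), and in each such group the relation $n_{s} n_{s'} \cdots = n_{s'} n_{s} \cdots$ is exactly Tits's lemma for pinned split groups. The main obstacle is bookkeeping: one must check that the affine pinning $x_\ba = x_\bb \circ m_{\varpi_F}$ for $\ba \in \breve\Delta \setminus \breve\Delta_0$ is compatible, under restriction to the rank-$2$ parahoric, with the standard pinning of the relevant finite group, i.e.\ that multiplying the gradient parametrization by $\varpi_F$ does not disturb the normalization constants appearing in the Chevalley commutator formula for that rank-$2$ system. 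This is where one genuinely uses that $G$ is $\bF$-split: there are no restriction-of-scalars or relative-root complications, the structure constants are the usual integer Chevalley constants, and the substitution $t \mapsto \varpi_F t$ is a group automorphism of $\BG_a$ that intertwines the pinnings correctly. Once this compatibility is recorded, each individual braid relation is a finite-group statement already in the literature, and the lemma follows by ranging over all pairs in $\bBS$.
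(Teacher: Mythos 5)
Your second, ``alternative'' route is the one the paper actually follows, but the way you frame it undersells what \cite[\S 6.1.3, Proposition~6.1.8]{BT1} gives you and therefore leaves a step looking necessary that in fact is not. The paper's argument, after reducing to an irreducible rank-$2$ system $\breve\Phi_{\bb,\bb'}$ and arranging its elements in circular order, appeals to two facts: (i) by \cite[\S 6.1.3]{BT1}, every $u \in U_{-\bb}(\bF)\setminus\{1\}$ factors uniquely as $u_1 m(u) u_2$ with $u_1, u_2 \in U_\bb(\bF)$ and $m(u) \in N_G(S)(\bF)$; and (ii) by \cite[Proposition~6.1.8, part (9)]{BT1}, for \emph{any} such $u$ and $u'$ the elements $m(u)$ and $m(u')^{-1}$ already satisfy the $k$-factor braid relation. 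Then one just observes, straight from the definition $n_{s_\ba}= x_\ba(1)x_{-\ba}(1)x_\ba(1)$, that $n_{s_\ba}=m(u)$ and $n_{s_{\ba'}}=m(u')^{-1}$ for suitable $u\in U_{-\bb}$, $u'\in U_{-\bb'}$. No rank-$2$ parahoric, no reduction to the residue field, and no Chevalley structure constants enter at all; the ``compatibility bookkeeping'' you flag as the main obstacle simply does not arise, because 6.1.8(9) holds for arbitrary nonzero $u, u'$, not just pinned ones.

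Your first route (reduce mod $\fkp_\bF$ to a rank-$2$ reductive quotient, get the braid relation downstairs, and lift) has a genuine gap at the lifting step. Both sides of the proposed braid relation lie in $N_G(T)(\bF)$ over the same Weyl group element, so a priori they differ by an element of $T(\bF)_1$, which is exactly the kernel of the reduction map you are using; agreement downstairs alone proves nothing. Your attempt to fix this by saying the words have entries in $\{0,1\}$ and hence the reduction is ``injective on the relevant finite set of such words'' is not an argument: the products are not themselves elements of the unipotent cells with entries in $\{0,1\}$ but elements of $N_G(T)$, and the point is precisely to show their $T$-component lies in $\{\pm 1\}$-valued cocharacters rather than in $1+\fkp_\bF$. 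One could make such a claim rigorous by working in the Chevalley group scheme over $\fkO_\bF$ and treating $\varpi_F$ as a formal parameter, but that is exactly the kind of structure-constant bookkeeping that the Bruhat--Tits uniqueness argument is designed to avoid. A smaller slip: your parenthetical ``(hence finite, since $\bW_\CX$ must be finite for the relevant facet)'' is false in general --- e.g.\ in type $\tilde A_1$ the two affine simple reflections generate the infinite dihedral group --- though this is harmless because when $k(\bs,\bs')=\infty$ there is no braid relation to verify.
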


\begin{remark}
For a different proof of the Coxeter relations for the affine Weyl group of a split reductive group, see \cite[Proposition 3.1]{Gan15}. 
\end{remark}

\begin{proof} Let $\bs = s_\ba$ and $\bs' = s_{\ba'}$ for $\ba, \ba' \in \breve\Delta$ with gradients $\bb,\bb'$ respectively. Let $\breve\Phi_{\bb,\bb'} \subset \breve\Phi(G,T)$ denote the rank 2 root system spanned by $\bb,\bb'$. If $\breve\Phi_{\bb,\bb'}$ is a product of rank 1 root systems, then the Coxeter relation is obvious, so we may and do assume that $\breve\Phi_{\bb,\bb'}$ is irreducible.

Set $k = k(\bs,\bs')$. We put the elements of $\breve\Phi_{\bb,\bb'}$ in ``circular order" as required in \cite[Proposition 6.1.8]{BT1}, that is, we can enumerate the elements of $\breve\Phi_{\bb,\bb'}$ as $\bb_1, \bb_2, \cdots, \bb_{2k}$ so that $\bb_1=\bb, \bb_k=\bb'$, and for $1<i<2k$,
\[\breve\Phi_{\bb,\bb'} \cap (\BQ_+\bb_{i-1} + \BQ_+ \bb_{i+1}) = \{\bb_{i-1}, \bb_i, \bb_{i+1}\}.\]

By \cite[\S 6.1.3]{BT1}, for any $u \in U_{-\bb}(\bF)\backslash\{1\}$ and $u' \in U_{-\bb'}(\bF)\backslash\{1\}$, there exists unique triples $(u_1, m(u), u_2) \in U_{\bb}(\bF) \times N_G(S)(\bF) \times U_{\bb}(\bF)$ and $(u'_1, m(u'), u'_2) \in U_{\bb'}(\bF) \times N_G(S)(\bF) \times U_{\bb'}(\bF)$ such that $u = u_1m(u)u_2$ and $u'=u'_1 m(u') u'_2$. By \cite[Proposition 6.1.8, Part (9)]{BT1},
\begin{align}\label{genericcoxeter}
   m(u) \cdot m(u') \i \cdots = m(u') \i \cdot m(u) \cdots,
\end{align}
where each side has $k$ factors.

It is clear from equation \eqref{Rep} that there exist $u \in U_{-\bb_*}$ and $u' \in U_{-\bb'_*}$ such that $n_{s_\ba} = m(u)$ and $n_{s_{\ba'}} = m(u')^{-1}$. Now the statement follows from \eqref{genericcoxeter}.
\end{proof}

Now we prove the following existence result of Tits group over $\breve F$. 

\begin{proposition}\label{prop:tits}
Suppose that $G$ is split over $\breve F$. Let $\{x_{\ba}\;|\; \ba \in \breve \D\}$ be an affine pinning of $G_{\breve F}$ and $\{n_{s_\ba}\;|\; \ba \in \breve\Delta\}$ be as in \eqref{Rep}. Let $\breve \CT$ be the group generated by $\breve S_2$, $\{n_{s_\ba}\;|\; \ba \in \breve\Delta\}$ and $\blambda(\varpi_F)$ for $\blambda \in X_*(T)$. Then $\breve \CT$ is a Tits group of $\bW$. 
\end{proposition}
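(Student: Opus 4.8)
The plan is to produce an explicit Tits cross-section and to show that $\breve\CT$ is exactly the union of its $\breve S_2$-cosets, in three steps, the real content being a single rank-one compatibility.

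\emph{Step 1: the affine Weyl group.} First I would define $m$ on $\bW_{\af}$: for $\bw\in\bW_{\af}$ with reduced expression $\bw=\bs_{\ba_{i_1}}\cdots\bs_{\ba_{i_r}}$, $\ba_{i_j}\in\breve\Delta$, put $m(\bw):=n_{s_{\ba_{i_1}}}\cdots n_{s_{\ba_{i_r}}}$; Lemma \ref{TitsWaf} together with the word property of Coxeter systems makes this independent of the reduced expression. A direct $\SL_2$-computation in the rank-one subgroup attached to the gradient $\bb$ of $\ba$, using \eqref{Rep} and $x_\ba=x_\bb\circ m_{\varpi_F}$ (resp.\ $x_\ba=x_\bb$ for $\ba\in\breve\Delta_0$), gives $n_{s_\ba}^2=\bb^\vee(-1)$ for every $\ba\in\breve\Delta$; the extra factor $\varpi_F$ does not affect the square. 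Since $n_{s_\ba}^2\in\breve S_2$ and $\breve S_2$ — being generated by the $W(G,T)$-orbit of the $\breve b^\vee(-1)$ and contained in the abelian group $T(\bF)$ — is normalised by $N_G(T)(\bF)$, the standard induction on length then yields $m(\bw)m(\bw')=m(\bw\bw')$ when $\breve\ell(\bw\bw')=\breve\ell(\bw)+\breve\ell(\bw')$, and $m(\bw)m(\bw')\in\breve S_2\cdot m(\bw\bw')$ in general. Setting $\breve\CT_{\af}=\langle\breve S_2,\ \{n_{s_\ba}\}_{\ba\in\breve\Delta}\rangle$ it follows that $\breve\CT_{\af}=\breve S_2\cdot m(\bW_{\af})$, and since $c\,m(\bw)$ ($c\in\breve S_2$) lies in $T(\bF)_1$ only when $\bw=e$ we get a short exact sequence $1\to\breve S_2\to\breve\CT_{\af}\xrightarrow{\breve\phi}\bW_{\af}\to1$.

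\emph{Step 2: passing to $\breve\CT$ and the short exact sequence.} Let $T'=\{\breve\lambda(\varpi_F):\breve\lambda\in X_*(T)\}$; then $\breve\phi|_{T'}$ is injective with image the translation subgroup $X_*(T)\subset\bW$, and $\breve\CT=\langle\breve\CT_{\af},T'\rangle$. The key input, which I call $(\star)$, is that for $\breve\mu$ in the coroot lattice $Q^\vee$ the affine lift $m(t_{\breve\mu})$ of the translation $t_{\breve\mu}\in\bW_{\af}$ lies in $\breve S_2$ times the unique element of $T'$ lifting $t_{\breve\mu}$. Granting $(\star)$: for $\breve\lambda\in X_*(T)$ and $\ba\in\breve\Delta$ with gradient $\bb$ one computes $\breve\lambda(\varpi_F)\,n_{s_\ba}\,\breve\lambda(\varpi_F)^{-1}=\bb^\vee(\varpi_F^{\langle\bb,\breve\lambda\rangle})\,n_{s_\ba}$, and $\bb^\vee(\varpi_F^{\langle\bb,\breve\lambda\rangle})=(\langle\bb,\breve\lambda\rangle\bb^\vee)(\varpi_F)$ lies in $\breve\CT_{\af}$ by $(\star)$ (as $\langle\bb,\breve\lambda\rangle\bb^\vee\in Q^\vee$); hence $T'$ normalises $\breve\CT_{\af}$ and $\breve\CT=\breve\CT_{\af}\cdot T'$. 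Surjectivity of $\breve\phi|_{\breve\CT}$ onto $\bW=\bW_{\af}\rtimes\Omega_\bal$ is then clear, its image containing $\bW_{\af}$ and all translations. If $a\in\breve\CT_{\af}$, $t\in T'$ and $\breve\phi(at)=e$, then $\breve\phi(t)=t_{\breve\mu}$ is a translation and $\breve\phi(a)=t_{\breve\mu}^{-1}\in\bW_{\af}$, forcing $\breve\mu\in Q^\vee$; then $a\in\breve S_2\cdot m(t_{-\breve\mu})$ and, using that $m(t_{-\breve\mu})\in T(\bF)$ together with $(\star)$, one gets $at\in\breve S_2$. This establishes part (1) of Definition \ref{def:tits}.

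\emph{Step 3: the cross-section, and conclusion modulo $(\star)$.} For each $\breve\tau\in\Omega_\bal$ I would pick a lift $m(\breve\tau)\in\breve\CT$ (possible by the surjectivity just shown), normalised by $m(e)=1$, and for $\bw=\bw_{\af}\breve\tau$ ($\bw_{\af}\in\bW_{\af}$, $\breve\tau\in\Omega_\bal$) set $m(\bw):=m(\bw_{\af})m(\breve\tau)$ with $m|_{\bW_{\af}}$ as in Step 1; since $\bw_{\af},\breve\tau$ are determined by $\bw$ and $m(\bw_{\af})$ is independent of the reduced expression, this is a well-defined section of $\breve\phi$. Then (2)(b) of Definition \ref{def:tits} holds by construction, and (2)(a) holds because $\bs_\ba\in\bW_{\af}$ gives $m(\bs_\ba)=n_{s_\ba}$ with $n_{s_\ba}^2=\bb^\vee(-1)$, which equals $\bb_*^\vee(-1)$ as $G$ is $\bF$-split. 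So, modulo $(\star)$, $\breve\CT$ is a Tits group of $\bW$.

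\emph{The main obstacle: $(\star)$.} Because $t_{\breve\mu}$ has trivial image in $W(G,T)=N_G(T)(\bF)/T(\bF)$, the element $m(t_{\breve\mu})$ lies in $T(\bF)$; thus $\delta(\breve\mu):=m(t_{\breve\mu})\cdot(\text{lift of }t_{\breve\mu}\text{ in }T')^{-1}$ lies in $T(\bF)_1$, and — using that $m(t_{\breve\mu})$ centralises $T(\bF)$ together with the cocycle relation of Step 1 — one checks that $\breve\mu\mapsto\delta(\breve\mu)$ descends to a homomorphism $Q^\vee\to T(\bF)_1/\breve S_2$. Hence it is enough to treat a simple coroot $\bb^\vee$, $\bb\in\breve\Delta_0$: here $t_{\bb^\vee}=\bs_\bb\cdot s_{\bb+1}$, where $s_{\bb+1}$ is the reflection in the hyperplane $\langle\bb,\cdot\rangle=-1$, and I expect the genuinely delicate point to be showing that $m(t_{\bb^\vee})$ — computed from a long reduced word in $\breve\Delta$ — agrees, up to $\breve S_2$, with the product of the natural representatives (in the sense of \eqref{Rep}) of $\bs_\bb$ and $s_{\bb+1}$, a product which a rank-one computation inside the subgroup attached to $\bb$ identifies with $\bb^\vee(\pm\varpi_F)$, hence with the relevant element of $T'$ modulo $\breve S_2$. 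This rewriting requires knowing that the family $\{n_{s_\ba}\}$ for \emph{all} affine roots $\ba$, defined via \eqref{Rep}, satisfies the full list of relations of \cite[\S 6.1]{BT1} (of which Lemma \ref{TitsWaf} is only the braid-relation part); carrying this out, and the ensuing rank-one identification, is where a careful, in places componentwise, analysis over the irreducible factors of the affine root system is needed.
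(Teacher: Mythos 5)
Your overall framing coincides with the paper's: you define the cross-section on $\bW_{\af}$ via reduced words (using Lemma \ref{TitsWaf}), choose lifts of $\Omega_\bal$, and check the short exact sequence via the deletion condition and normality of $\breve S_2$. The gap is your claim $(\star)$, which you yourself flag as ``the main obstacle''; the sketch you give does not close it. Beyond leaving the computation unperformed, the reflection $s_{\bb+1}$ for $\bb\in\breve\Delta_0$ a simple root is \emph{not} an affine simple reflection once the rank is $\geq 2$ (the only element of $\breve\Delta\setminus\breve\Delta_0$ in a given irreducible component has gradient the negative of a highest root), so \eqref{Rep} provides no ``natural representative'' of it, and the route you propose through the full relations of \cite[\S 6.1]{BT1} is substantial extra machinery the paper never needs.

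The paper sidesteps $(\star)$ with a much lighter rank-one identity. For $\ba\in\breve\Delta\setminus\breve\Delta_0$ with gradient $\bb\in\breve\Phi(G,T)$, the definition $x_\ba=x_\bb\circ m_{\varpi_F}$ gives by a direct $\SL_2$-calculation $n_{s_\ba} = \bb^\vee(\varpi_F)\,n_{s_\bb}$, where $n_{s_\bb}=x_\bb(1)x_{-\bb}(1)x_\bb(1)$ is the lift coming from the Chevalley system (it is the same calculation that yields $n_{s_\ba}^2=\bb^\vee(-1)$). Hence $\bb^\vee(\varpi_F)=n_{s_\ba}n_{s_\bb}^{-1}$ lies in the subgroup $\breve\CT'$ generated by the $n_{s_\ba}$, $\ba\in\breve\Delta$. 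Since the lifts $n_\bw\in\breve\CT'$, $\bw\in\bW_0$, conjugate $\breve\lambda(\varpi_F)$ to $(\bw(\breve\lambda))(\varpi_F)$, and the $\bW_0$-orbit of such $\bb^\vee$ spans $\BZ\breve\Phi^\vee(G,T)$, one gets $\breve\lambda(\varpi_F)\in\breve\CT'$ for all $\breve\lambda\in\BZ\breve\Phi^\vee(G,T)$. Combined with the observation that $\breve\phi(t_1 n\breve\lambda(\varpi_F))=1$ forces $\breve\lambda\in\BZ\breve\Phi^\vee(G,T)$, and with the deletion argument that you also invoke, this pins down $\ker\breve\phi\subset\breve S_2$. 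In particular $(\star)$ is strictly more than you need: to show $T'$ normalizes $\breve\CT_{\af}$, the membership $\bb^\vee(\varpi_F)\in\breve\CT'$ already suffices, without ever comparing $m(t_{\breve\mu})$ against $\breve\mu(\varpi_F)$ modulo $\breve S_2$.
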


\begin{proof}
By direct calculation, $n_{\bs_\ba}^2=\bb^\vee(-1)$, where $\bb$ is the gradient of $\ba$. 

Now we define the lifting $n_{\bw}$. 

If $\bw \in \bW_{\af}$, then we set $n_{\bw}=n_{\bs_{i_1}} \cdots n_{\bs_{i_k}}$, where $\bs_{i_1} \cdots \bs_{i_k}$ is a reduced expression of $\bw$ in $\bW$. By Lemma \ref{TitsWaf}, the element $n_{\bw}$ is independent of the choice of reduced expression. If $\breve \t \in \Omega_{\bal}$, we may write $\breve \t$ as $t_{\breve \l} \by$ with $\breve \l \in X_*(T)$ and $\by \in \bW_0$. We then set $n_{\breve \t}=\breve \l(\varpi_F) n_{\by}$. Note that any element $\bw \in \bW$ is of the form $\bw=\bw_1 \breve \t$ for some $\bw_1 \in \bW_{\af}$ and $\breve \t \in \Omega_{\bal}$. We set $n_{\bw}=n_{\bw_1} n_{\breve \t}$. The collection $\{n_{\bw}\;|\; \bw \in \bW\}$ satisfies condition (2) in Definition \ref{def:tits}. 

Now we check condition (1). Note that $\breve S_2$ is a normal subgroup of $\breve \CT$. For any $\bw \in \bW$ and $\breve \l \in X_*(T)$, we have $n_{\bw} \breve \l(\varpi_F) n_{\bw} \i=\breve \l'(\varpi_F)$, where $\breve \l'=\bw(\breve \l) \in X_*(T)$. Let $\breve \CT'$ be the subgroup of $\breve \CT$ generated by $n_{s_{\ba}}$ for $\ba \in \breve \D$. Then any element in $\breve \CT$ is of the form $t_1 n \breve \l(\varpi_F)$ for some $t_1 \in \breve S_2$, $n \in \breve \CT'$ and $\breve \l \in X_*(T)$. If $\breve \phi(t_1 n \breve \l(\varpi_F))=1$, then $\breve \l \in \BZ \breve \Phi^\vee(G, T)$. 

Note that $\BZ \breve \Phi^\vee(G, T)$ equals to the lattice spanned by $\bw(\bb)$, where $\bw \in \bW_0$ and $\bb$ is the gradient of some $\ba \in \breve \D\backslash \breve \D_0$. By direct calculation, $n_{s_\ba} = \bb^\vee(\varpi_F) n_{s_\bb}$. In particular, $\bb^\vee(\varpi_F) \in \breve \CT'$ and hence $\breve \l(\varpi_F) \in \breve \CT'$ for all $\breve \l \in \BZ \breve \Phi^\vee(G, T)$. Therefore $\ker \breve \phi$ is contained in the subgroup generated by $\breve S_2$ and $\breve \CT'$. 

Any element of $\breve \CT'$ is of the form $n_{\bs_{i_1}}^{\pm 1} \cdots n_{\bs_{i_k}}^{\pm 1}$. Since $n_{\bs}^2 \in \breve S_2$, we have $n_{\bs_{i_1}}^{\pm 1} \cdots n_{\bs_{i_k}}^{\pm 1} \in n_{\bs_{i_1}} \cdots n_{\bs_{i_k}} \breve S_2$. 

It remains to show that

(a) If $\bs_{i_1} \cdots \bs_{i_k}=1$, then $n_{\bs_{i_1}} \cdots n_{\bs_{i_k}} \in \breve S_2$. 

We argue by induction on $k$. By the deletion condition of Coxeter groups (see \cite[Chapter IV, \S 1.3 - \S 1.5]{Bou02}), there exists $\bs_{i'_1}, \ldots, \bs_{i'_k}$ such that $n_{\bs_{i_1}} \cdots n_{\bs_{i_k}}=n_{\bs_{i'_1}} \cdots n_{\bs_{i'_k}}$ and $\bs_{i'_l}=\bs_{i'_{l+1}}$ for some $l$. We have $n_{\bs_{i'_1}} \cdots n_{\bs_{i'_k}}\in n_{\bs_{i'_1}} \cdots n_{\bs_{i'_{l-1}}} \breve S_2 n_{\bs_{i'_{l+2}}} \cdots n_{\bs_{i'_k}} =n_{\bs_{i'_1}} \cdots \hat n_{\bs_{i'_l}} \hat n_{\bs_{i'_{l+1}}} \cdots n_{\bs_{i'_k}} \breve S_2$.

Note that $\bs_{i'_1} \cdots \hat \bs_{i'_l} \hat \bs_{i'_{l+1}} \cdots \bs_{i'_k}=1$. Since there are only $k-2$ simple reflections involved, by inductive hypothesis, $n_{\bs_{i'_1}} \cdots \hat n_{\bs_{i'_l}} \hat n_{\bs_{i'_{l+1}}} \cdots n_{\bs_{i'_k}} \in \breve S_2$. Hence $n_{\bs_{i_1}} \cdots n_{\bs_{i_k}} \in \breve S_2$. 

Condition (1) of Definition \ref{def:tits} is verified. 
\end{proof}
 
\section{Tits group over $F$}\label{sec:6}

\subsection{The strategy}
The main result of this section is the existence of Tits groups over $F$ for any connected reductive group defined over $F$ and splits over $\breve F$. The strategy is as follows. 
\begin{enumerate}
    \item Let $G$ connected, reductive group over $F$ and let $\sigma$ be the Frobenius morphism on $G$ with $G(\bF)^\sigma = G(F)$. We first construct an affine pinning such that the set $\{n_{s_\ba}\;|\; \ba \in \CX\}$ is $\s$-stable for any $\s$-orbit $\CX$ of $\breve\Delta$ with $\bW_{\CX}$ finite. This result has two consequences. First, when $G$ is semisimple and simply connected, it yields a definition of the Tits group of its Iwahori-Weyl group, which is the affine Weyl group, over $F$. Second, we may construct a Tits group $\breve \CT$ over $\breve F$ that is stable under the action of a given quasi-split Frobenius morphism $\s$; 
    
    \item We then choose a suitable Frobenius morphism $\s^*$ for each inner form and show that there exists a Tits cross-section in $\breve \CT$ that is ``compatible'' with the Frobenius morphism $\s^*$; 
    
    \item Finally, we use the descent argument to show that $\breve \CT^{\s^*} \subset G(\bF)^{\s^*}$ is a Tits group of the Iwahori-Weyl group over $F$ of the group $G(\bF)^{\s^*}$. 
\end{enumerate}

\subsection{Affine pinnings and Frobenius morphisms}\label{HDSS} We have proved in Lemma \ref{TitsWaf} that given any affine pinning $\{x_\ba\;|\; \ba \in \breve\Delta\}$, the set $\{n_\bs\;|\; \bs \in \bBS\}$ satisfies the Coxeter relations, where $n_{s_\ba}= x_\ba(1)x_{-\ba}(1)x_\ba(1)$. By \S \ref{sec:WF}, for any $s \in \BS$, there exists a $\s$-orbit $\CX$ of $\breve\Delta$ with $\bW_{\CX}$ finite such that $s=\bw_{\CX} \in \bW$. In this section, we show the following.

\begin{proposition}\label{sigmarepsWaf}
Let $G$ be a connected reductive group defined over $F$ that splits over $\bF$. Let $\s$ be a Frobenius morphism on $G_\bF$.  There exists an affine pinning $\{x_\ba\;|\; \ba \in \breve\Delta\}$ such that the set $\{n_{s_\ba}\;|\; \ba \in \CX\}$ is $\s$-stable for any $\s$-orbit $\CX$ of $\breve\Delta$ with $\bW_{\CX}$ finite. 
\end{proposition}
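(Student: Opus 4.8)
The plan is to produce a well-chosen affine pinning by starting from an arbitrary one and rescaling it along the ``nice'' orbits. Since $G$ splits over $\bF$ we have $S=T$, the root system $\breve\Phi=\breve\Phi(G,S)$ is reduced, and $\sigma$ (which stabilises $\bal$) permutes $\breve\Delta$. Choose a Chevalley system $\{x^0_\bb\}_{\bb\in\breve\Phi}$ defined over $\fkO_\bF$ — using the reductive $\fkO_\bF$-model of $G$ attached to the special vertex $\bv_0$ — together with the associated affine pinning and the elements $n^0_{s_\ba}=x^0_\ba(1)x^0_{-\ba}(1)x^0_\ba(1)$ as in \eqref{Rep}; by Lemma~\ref{TitsWaf} these already satisfy the Coxeter relations, and $(n^0_{s_\ba})^2=\bb^\vee(-1)$. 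Because $\sigma(U_\bb)=U_{\sigma\bb}$ and both ${}^\sigma(x^0_\bb)$ and $x^0_{\sigma\bb}$ are $\fkO_\bF$-integral isomorphisms $\BG_a\xrightarrow{\sim}U_{\sigma\bb}$, there is a unique unit $\lambda_\bb\in\fkO_\bF^\times$ with ${}^\sigma(x^0_\bb)=x^0_{\sigma\bb}\circ m_{\lambda_\bb}$; the Chevalley/Steinberg normalisation forces $\lambda_{-\bb}=\lambda_\bb^{-1}$. Using that $1$ and $\varpi_F$ are $\sigma$-fixed and the elementary identity $x_\beta(c)x_{-\beta}(c^{-1})x_\beta(c)=\beta^\vee(c)\cdot x_\beta(1)x_{-\beta}(1)x_\beta(1)$, one obtains, for every $\ba$ such that $\ba$ and $\sigma\ba$ both lie in $\breve\Delta_0$ or both lie in $\breve\Delta\setminus\breve\Delta_0$, the key formula
\[
\sigma\bigl(n^0_{s_\ba}\bigr)=(\sigma\bb)^\vee(\lambda_\bb)\,n^0_{s_{\sigma\ba}},\qquad \bb=\text{gradient of }\ba .
\]
Thus, on such orbits, $\sigma$-stability of $\{n^0_{s_\ba}\}$ is equivalent to $(\sigma\bb)^\vee(\lambda_\bb)=1$ for all members, and it suffices to force $\lambda_\bb=1$ there.

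Next I would carry out the rescaling. Replacing each simple-root pinning by $x_{\bb_i}=x^0_{\bb_i}\circ m_{t_i}$ with $t_i\in\fkO_\bF^\times$ induces $x_\bb=x^0_\bb\circ m_{t_\bb}$, where $t_\bb$ is the monomial in the $t_i$ obtained from the expansion of $\bb$ in the simple roots, and replaces the discrepancies by $\lambda'_\bb=t_{\sigma\bb}^{-1}\lambda_\bb\,{}^\sigma t_\bb$. So I must solve $t_{\sigma\bb}=\lambda_\bb\,{}^\sigma t_\bb$ for every gradient $\bb$ of a root lying in a nice $\sigma$-orbit (one with $\bW_\CX$ finite). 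The gradient map on $\breve\Delta$ is injective and $\sigma$ preserves each orbit, so the systems attached to distinct nice orbits involve disjoint sets of gradients; putting them together, the obstruction to a simultaneous solution is the class of a single $1$-cocycle of $\Gal(\bF/F)=\langle\sigma\rangle$ valued in a finite direct sum of copies of $\fkO_\bF^\times$ permuted by $\sigma$ — the cocycle measuring the failure of the starting pinning to be $\sigma$-equivariant along those roots. By Shapiro's lemma this class is computed by $H^1$ of the stabiliser of one orbit, i.e. by $H^1\!\bigl(\Gal(\bF/\bF'),\fkO_\bF^\times\bigr)$ for a finite unramified extension $\bF'/F$, and this vanishes by Hilbert~90 / Lang's theorem: the maps $y\mapsto y^{q^{d}-1}$ on $\bar\kk^\times$ and $y\mapsto y^{q^{d}}-y$ on $\bar\kk$ are surjective, so, by dévissage along the $\fkp_\bF$-adic filtration, $\sigma^{d}-1$ is surjective on $\fkO_\bF^\times$. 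Because the $t_i$ can be chosen in $\fkO_\bF^\times$, the rescaled system is again an affine pinning in the sense of \S\ref{Affpinning} (unit rescalings do not change the image of $n_{s_\ba}$ in $\bW_{\af}$), and we extend the new pinning of $\breve\Delta_0$ to a Chevalley system as usual.

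With this affine pinning the key formula now gives $\sigma(n_{s_\ba})=n_{s_{\sigma\ba}}$ for every $\ba$ in a nice $\sigma$-orbit, so $\{n_{s_\ba}\mid\ba\in\CX\}$ is $\sigma$-stable for every such $\CX$, which is the assertion. I expect the main obstacle to be the second step: isolating the precise cohomology group that governs the rescaling and checking that it is killed by Lang's theorem while keeping all scalars in $\fkO_\bF^\times$, so that one stays within the class of genuine affine pinnings rather than arbitrary systems of pinnings whose $n_{s_\ba}$ would have the wrong valuations. A secondary point, needed for the key formula to apply at all, is the combinatorial input of Lusztig's description of the nice orbits \cite[Theorem~A.8]{Lusz}: one uses that no nice orbit meets both $\breve\Delta_0$ and $\breve\Delta\setminus\breve\Delta_0$ (otherwise the formula picks up a factor $\varpi_F^{\pm1}$, which lies in no kernel of a coroot), and that the only nice orbits whose roots fail to be pairwise orthogonal consist of two roots spanning a system of type $A_2$ with $\sigma$ acting by the transposition — a case which, for the set-stability asserted here, still reduces to the same application of Lang's theorem (the braid relation of Lemma~\ref{TitsWaf} only becomes relevant later, when one passes from the $n_{s_\ba}$, $\ba\in\CX$, to a lift of $\bw_\CX$).
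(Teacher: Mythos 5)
Your strategy — start from a pinning, measure the $\sigma$-discrepancies as a $1$-cocycle, and kill it by Hilbert~90/Lang — is a reasonable idea, but as written it has a gap whose source is the choice of integral model.

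You take a Chevalley system $\{x^0_\bb\}$ that is $\fkO_\bF$-integral with respect to the reductive $\fkO_\bF$-model at $\bv_0$, and you then assert that ${}^\sigma(x^0_\bb)$ and $x^0_{\sigma\bb}$ are both $\fkO_\bF$-integral isomorphisms $\BG_a\to U_{\sigma\bb}$, hence differ by a unit $\lambda_\bb\in\fkO_\bF^\times$. That assertion requires $\sigma$ to preserve the hyperspecial parahoric at $\bv_0$, i.e.\ $\sigma(\bv_0)=\bv_0$. The proposition is stated for \emph{arbitrary} Frobenii $\sigma$ on $G_\bF$, in particular for inner forms that are not unramified, and for those no special vertex in $\overline{\bal}$ need be $\sigma$-fixed; $\sigma$ then carries the model at $\bv_0$ to the model at $\sigma(\bv_0)\neq\bv_0$, and $\lambda_\bb$ acquires a $\varpi_F$-factor. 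Your fallback — the ``combinatorial input'' that no nice $\sigma$-orbit $\CX$ (with $\bW_\CX$ finite) can meet both $\breve\Delta_0$ and $\breve\Delta\setminus\breve\Delta_0$ — is false, precisely in this situation. For $G(\bF)=\mathrm{PGL}_4$ with $\sigma=\mathrm{Ad}(g)\circ\sigma_0$, $g$ lifting the order-$2$ element $\bz_0^2\in\Omega_{\bal,\ad}\cong\BZ/4\BZ$ (so that $G=\mathrm{PGL}_2(D)$), $\sigma$ swaps $\ba_0\leftrightarrow\ba_2$ and $\ba_1\leftrightarrow\ba_3$; since $\langle\theta,\ba_2\rangle=0$ the orbit $\{\ba_0,\ba_2\}$ has $\bW_{\{\ba_0,\ba_2\}}\cong(\BZ/2\BZ)^2$ finite, yet $\ba_0\notin\breve\Delta_0$ while $\ba_2\in\breve\Delta_0$. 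In this case your key formula picks up $(\sigma\bb)^\vee(\varpi_F^{\pm1})$ and the unit-valued cocycle you feed to Lang's theorem does not exist. The paper avoids both problems by never leaving the $\sigma$-equivariant world: it uses the Moy-Prasad filtration $U_b(F)_{v,r}$ at a $\sigma$-fixed point $v\in\CA(A,F)$ (available because $\bW_\CX^\sigma\neq 1$ forces $\bb|_A=b\neq 0$), invokes the Bruhat--Tits non-degeneracy $U_b(F)_{v,r}\neq U_b(F)_{v,r+}$ to produce a $\sigma^k$-fixed $x_\ba(u)$ with $u\in\fkO_\bF^\times$, and then simply \emph{defines} the pinning on the orbit by $x'_{\sigma^i\ba}:=\sigma^i(x'_\ba)$, which makes $\sigma$-stability tautological; no cohomology and no orbit-combinatorics are needed.

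Your cocycle argument could be repaired by measuring discrepancies against the Iwahori filtration rather than the $\bv_0$-model: since $\sigma$ preserves $\bal$ it preserves $U_\ba(\bF)_{\bal,0}$, so $\sigma(x_\ba)$ and $x_{\sigma\ba}$ (the \emph{affine} pinnings, not the Chevalley pinnings) always differ by a unit, and the mixing of $\breve\Delta_0$ with its complement becomes irrelevant. With that change the Lang/Hilbert-90 dévissage on $\fkO_\bF^\times$ is applied to the right object. As written, however, the key formula and hence the cocycle you solve are wrong outside the unramified case, so the proof does not go through.
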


\begin{remark}
(1) If $\sigma$ is a quasi-split Frobenius, that is, if $G_\bF^\sigma$ is quasi-split, then $\bW_{\CX}$ is finite for any $\sigma$-orbit $\CX$ in $\breve\Delta$. \\
(2) Assuming that $G$ is absolutely simple, the finiteness assumption on  $\bW_\CX$ fails only for inner forms of type $A$, that is, if $G_\bF$ is split of type $A$, and if $\sigma$ is such that $G_\bF^\sigma$ is a group over $F$ whose adjoint group is $\PGL_1(D)$ for a suitable division algebra $D$. Such a group is anisotropic over $F$ with trivial affine Weyl group and its Iwahori-Weyl group has only length zero elements.\\ (3) Recall from \S \ref{sec:WF} that the elements of $\BS$ are in bijection with $\sigma$-orbits $\CX$ such that $\bW_{\CX}$ is finite. So, it suffices to consider such orbits to construct the Tits group over $F$. However, while the proof below uses the assumption that the $\s$-orbit $\CX$ is such that $\bW_{\CX}$ is finite, we will show in Proposition \ref{prop:power-s} through a different argument that the finiteness assumption on $\bW_{\CX}$ can be dropped.
\end{remark}

\begin{proof}
Let $\{x_\ba\;|\; \ba \in \breve\Delta\}$ be an affine pinning and $\CX$ be a $\s$-orbit in $\breve\Delta$ such that $\bW_\CX$ is finite. Let $k = \# \CX$.  

Fix $\ba \in \CX$ and let $\bb$ be the gradient of $\ba$. Since $W_{\CX}$ is finite, we have $\bw_{\CX} \in \bW_{\CX}^\s$. In particular, $W_\CX^\sigma \neq 1$. Thus $\bb|_A \neq 0$ and hence is a root $b$ in $\Phi(G,A)$. We show that 

(a) There exists $u \in \fkO_{\bF}^\times$ such that $\sigma^k(x_\ba(u)) = x_\ba(u)$. 

Let $v \in \CA(A,F) \subset \CA(T, \bF)$ and $r \in \BR$. For $\bb \in \breve\Phi(G,T)$, let $U_\bb(\bF)_{v,r} \subset U_\bb(\bF)$ denote the filtration of root subgroup $U_{\bb}(\bF)$ as in \cite[\S 4.3]{BT2}. We recall the definition of the filtration of the root subgroup $U_b(F)$ (cf. \cite[\S 5.1.16 - 5.1.18]{BT2}). Let $\breve\Phi^b: = \{ \bc \in \breve\Phi(G,T)\;|\; \bc|_A = b \text{ or } 2b\}$. This is a $\s$-stable positively closed subset of $\breve\Phi(G,T)$; that is if $\bc_1.\bc_2 \in \breve\Phi^b$ such that $\bc+\bc'$ is a root, then $\bc+\bc' \in \breve\Phi^b$. For any fixed ordering, the subset
\begin{align}\label{ubr}
    U_b(\bF)_{v,r} := \prod_{\bc \in \breve\Phi^b, \bc|_A = b} U_\bc(\bF)_{v,r}  \prod_{\bc \in \breve\Phi^b_{nd}, \bc|_A = 2b} U_\bc(\bF)_{v,2r}
\end{align}
is a subgroup of $U_b(\bF)$. Let $U_b(F)_{v,r} := U_b(\bF)_{v,r} \cap U_b(F)$. 

We have
\begin{align}\label{ubvv0}
    U_\bb(\bF)_{v,r} = U_{\bb}(\bF)_{\bv_0, r+ \bb(v - \bv_0)},
\end{align}
where $\bv_0$ is the special vertex in $\CA(S,\bF)$ fixed in \S \ref{notation}. 

The pinning $x_\ba:  \BG_a \rightarrow U_{\bb}(\bF)$ satisfies $x_{\ba}(\fkO_{\bF}) = U_\bb(\bF)_{\bv_0,r_0}$ and  $x_{\ba}(\fkp_{\bF}) = U_{\bb}(\bF)_{\bv_0,s_0}$ for a suitable $r_0<s_0$. 

Using \eqref{ubvv0} and then adjusting $r$ if necessary, we ensure 
\begin{align}\label{xabf}
x_{\ba}(\fkO_{\bF}) = U_{\bb}(\bF)_{v,r},\;\; x_{\ba}(\fkp_{\bF}) = U_{\bb}(\bF)_{ v, r+}
\end{align}
for a suitable $r \in \BR$. In particular, $U_{\bb}(\bF)_{v,r}\neq U_{\bb}(\bF)_{v,r+}$.
Now \cite[Proposition 5.1.19]{BT2} implies  that $U_b(F)_{v,r} \neq U_b(F)_{v,r+}$. In other words, there exists $u' \in U_b(F)_{v,r}$ such that
\[ u' \notin \prod_{\bc \in \breve\Phi^b, \bc|_A = b} U_\bc(\bF)_{v,r+}  \prod_{\bc \in \breve\Phi^b_{nd}, \bc|_A = 2b} U_\bc(\bF)_{v,2r}.\]
Note that $\sigma^k$ fixes every element of $\CX$ (and hence also every element of $\Phi^b$) and $\sigma^k(u') = u'$. By \eqref{ubr}, we have
\[u' \in \prod_{\bc \in \breve\Phi^b, \bc|_A = b} U_\bc(\bF)_{v,r}^{\sigma^k}  \prod_{\bc \in \breve\Phi^b_{nd}, \bc|_A = 2b} U_\bc(\bF)_{v,2r}^{\sigma^k},\]
and 
\[u' \notin \prod_{\bc \in \breve\Phi^b, \bc|_A = b} U_\bc(\bF)_{v,r+}^{\sigma^k} \prod_{\bc \in \breve\Phi^b_{nd}, \bc|_A = 2b} U_\bc(\bF)_{v,2r}^{\sigma^k}.\]

Thus there exists $\bc \in \Phi^b, \bc|_A = b$ such that
$ U_\bc(\bF)_{v,r+}^{\sigma^k}  \subsetneq U_\bc(\bF)_{v,r}^{\sigma^k}$. Since $\bc = \sigma^i(\bb)$ for a suitable $i<k$, we also have
\[U_\bb(\bF)_{v,r+}^{\sigma^k}  \subsetneq U_\bb(\bF)_{v,r}^{\sigma^k}.\]

Let $u'' \in U_\bb(\bF)_{v,r}^{\sigma^k}\backslash U_\bb(\bF)_{v,r+}^{\sigma^k}$ and $u= x_\ba^{-1}(u'')$. Then $\sigma^k(x_\ba(u)) = x_\ba(u)$. By \eqref{xabf}, $u\in \fkO_{\bF}^\times$.

(a) is proved. 

Since $x_\ba(u) x_{-\ba}(u^{-1}) x_\ba(u) \in N_G(T)(\bF)$, we have $$x_\ba(u) \sigma^k(x_{-\ba}(u^{-1}))x_\ba(u)=\s^k(x_\ba(u)) \sigma^k(x_{-\ba}(u^{-1})) \s^k(x_\ba(u))\in N_G(S)(\bF).$$ The uniqueness assertion in \cite[\S 6.1.2, (2)] {BT2} implies that $\sigma^k(x_{-\ba}(u^{-1})) = x_{-\ba}(u^{-1})$.

Let $x_\ba' = x_\ba \circ m_u$, where $m_u$ is the multiplication by $u$. We consider the pinning $\{x_\ba', \sigma \circ x_\ba', \cdots \sigma^{k-1} \circ x_{\ba}'\}$. Then $x_{\ba}'(1) =x_{\ba}(u)$ and $x_{-\ba}'(1) = x_{-\ba}(u^{-1})$. For $\bc \in \CX$, let $n_{s_\bc}' = x_{\bc}'(1)x_{-\bc}'(1)x_{\bc}'(1)$ be the representative in $N_G(S)(\bF)$ of $s_\bc$ obtained using this pinning. Then $\sigma ^{k}(n_{s_\bc}' )= n_{s_\bc}'$ and the set $\{n_{s_\bc}'\;|\; \bc \in \CX\} = \{n_{s_\ba}', \sigma(n_{s_\ba}'), \cdots, \sigma^{k-1}(n_{s_\ba}')\}$ is $\sigma$-stable.
\end{proof}

\subsection{The Frobenius morphism for each inner form}\label{FMIF} In this rest of this section, let $G$ denote a connected, reductive group over $F$ that is quasi-split over $F$ and split over $\bF$.  Let $\sigma$  denote the Frobenius morphism on $G_\bF$ so that the $F$-structure it yields is $G$. We will later construct for each $F$-isomorphism class of inner twists of $G$  a suitable Frobenius morphism $\sigma^*$ and let $G^* = G_\bF^{\sigma^*}$ be the $F$-group in the given isomorphism class of inner twists. 

By Proposition \ref{sigmarepsWaf}, there exists an affine pinning $\{x_\ba\;|\; \ba \in \breve\Delta\}$ such that the set $\{n_{\bs}\;|\; \bs \in \bBS\}$ is $\s$-stable. For $\blambda \in X_*(T)$, let $n_\blambda = \blambda(\varpi_F)$. Then $\s(n_{\blambda})=n_{\s(\blambda)}$. Let $\breve \CT$ be the Tits group of $\bW$ generated by $\breve S_2$, $\{n_{\bs}\;|\; \bs \in \bBS\}$ and $\{n_{\blambda}\;|\;\blambda \in X_*(T)\}$. Then $\breve \CT$ is stable under the action of $\s$. 

We will choose a suitable Frobenius morphism $\s^*$ for each $F$-isomorphism class of inner twists of $G$ such that $\breve \CT$ is stable under the action of $\s^*$. Finally, we will show that $\breve \CT^{\s^*}$ is a Tits group over $F$ for $G^*$. 

\subsubsection{The group $(\Omega_{\bal,\ad})_{\sigma}$}\label{omega-1} 

The $F$-isomorphism classes of inner twists of $G$ is parametrized by $H^1(\langle \sigma \rangle, G_{\ad}(\bF))$. 
By \cite[Lemma 2.1.2 and \S 2.3-2.4]{DR}, we have
\[H^1(\langle\sigma\rangle, G_{\ad}(\bF)) \cong H^1(\langle\sigma\rangle, \Omega_{\bal, \ad})=(\Omega_{\bal,\ad})_{\sigma}.\]

Now, we describe the group $\Omega_{\bal,\ad}$ in more detail. We may assume that $G_{\bF, \ad}$ is $\bF$-simple.  We will use the same labeling of the roots in $\Phi(G,T)$ as in \cite{Bou02} and we denote the indexing set of simple reflections by $I$. With this, the set $\{s_\ba\;|\; \ba \in \breve\Delta_0\}$ is identified with $\{\bs_i\;|\; i \in I\}$.  Note that we have used the letter $I$ for the indexing set for the simple reflections; this should not cause any confusion, since the Iwahori subgroup will not be mentioned in the rest of this paper. For $J \subset I$, let $\by_J$ be the maximal element in the subgroup generated by $\bs_i, i \in J$. 

Let $\breve\rho^\vee$ be the half-sum of the positive coroots in any positive system. Let $i \in I$. If $\omega_i$ is minuscule, we denote by $\bnu_{\ad, (i)}=t_{\omega^\vee_i} \by_{(i)} \in \Omega_{\bal, \ad}$ the corresponding element. Here $\by_{(i)}=\by_{I \backslash \{i\}} \by_I$. Note that if $\bnu_{\ad, (i)}=\bnu_{\ad, (j)}^k$ for some $k \in \BN$, then we also have that $\by_{(i)}=\by_{(j)}^k$. 

The description of $\Omega_{\bal,ad}$ is given in the following table. We list according to the type of the local Dynkin diagram of $G_{\bF,\ad}$. We only list the types for which $\Omega_{\bal,\ad}$ is non-trivial. In the last column, we make a choice of generator $\nuag$ in the case where $\Omega_{\bal, \ad}$ is cyclic. Such element $\nuag$ will be used later.
\begin{table}[H]
  \begin{center}
    \caption{The group $\Omega_{\bal,\ad}$}
    \label{tab:tableOmega}
    \begin{tabular}{|m{1.5cm}|m{1.6cm}|m{2cm}|m{4cm}|m{1.7cm}|} 
    \hline 
    \vspace{5pt}
     Type &$\Phi(G,T)$  & $ \Omega_{\bal,\ad}$ & Elements & Generator\\[5pt]
    \hline
    \vspace{5pt}
      $ A_n$&$ A_n$ &$ \BZ/(n+1)\BZ$ &$\{1,\bnu_{\ad,(i)}\; |\; 1 \le i \le n\}$& $\bnu_{\ad,(1)}$\\[5pt]
     \hline
    \vspace{5pt}
    $ B_n$&$ B_n$ &$ \BZ/2\BZ$ &$\{1,\bnu_{\ad,(1)}\}$& $\bnu_{\ad,(1)}$\\[5pt]
    \hline
    \vspace{5pt}
    $ C_n$&$ C_n$ &$ \BZ/2\BZ$ &$\{1,\bnu_{\ad,(n)}\}$& $\bnu_{\ad,(n)}$\\[5pt]
    \hline
    \vspace{5pt}
    $ D_n$ &$ D_n, 2 \mid n$ &$ \BZ/2\BZ \times \BZ/2\BZ $ &$\{1,\bnu_{\ad,(1), \bnu_{\ad,(n-1)}, \bnu_{\ad,(n)}}\}$& N/A \\[5pt]
    \hline
    \vspace{5pt}
    $ D_n$ &$ D_n, 2 \nmid n$ &$ \BZ/4\BZ$ &$\{1,\bnu_{\ad,(1), \bnu_{\ad,(n-1)}, \bnu_{\ad,(n)}}\}$& $\bnu_{\ad,(n)}$\\[5pt]
    \hline
    \vspace{5pt}
    $ E_6$ &$ E_6$ &$ \BZ/3\BZ$ &$\{1,\bnu_{\ad,(1)}, \bnu_{\ad,(6)}\}$& $\bnu_{\ad,(1)}$\\[5pt]
    \hline
    \vspace{5pt}
    $E_7$ &$ E_7$&$ \BZ/2\BZ$ &$\{1,\bnu_{\ad,(7)}\}$& $\bnu_{\ad,(7)}$\\[5pt]
    \hline
    
    \end{tabular}
  \end{center}
\end{table}

If $\s$ acts trivially on $\Omega_{\bal,\ad}$, then $(\Omega_{\bal,\ad})_{\sigma} \cong \Omega_{\bal,\ad}$. If the action of $\s$ on $\Omega_{\bal,\ad}$ is nontrivial, then $$(\Omega_{\bal,\ad})_{\sigma}=\begin{cases} \BZ/2 \BZ, & \text{ if } G_{\bF,\ad} \text{ is of type } A_{2n+1} \text{ or } D_n; \\ 1, & \text{ otherwise}. \end{cases}$$

\subsubsection{The construction of suitable Frobenius morphism $\s^*$}\label{Conssigma*}  

Let $j: G_\bF \rightarrow G_{\bF, \ad}$ denote the adjoint quotient. This induces maps $T \rightarrow T_{\ad}$, $\bW \rightarrow \bW_{\ad}$ and $\Omega_{\bal} \rightarrow \Omega_{\bal,\ad}$, and we will denote all these maps by $j$ as well.  
The exact sequence $1 \rightarrow Z \rightarrow T \rightarrow T_{\ad} \rightarrow 1$ induces exact sequences
\[1 \rightarrow X_*(Z^0) \rightarrow X_*(T) \xrightarrow{j} X_*(T_{\ad})\]
and 
\[1 \rightarrow X_*(Z^0) \rightarrow \Omega_\bal \xrightarrow{j} \Omega_{\bal,\ad},\]
where $Z$ is the center of $G$ and $Z^0$ is the maximal torus in the center of $G$.

We will construct a suitable Frobenius morphism $\s^*$ associated to each $F$-isomorphism class of inner twists of $G$. It suffices to consider the case where $G_{F, \ad}$ is $F$-simple. 

We first discuss the case where $G_{\bF, \ad}$ is $\bF$-simple. 

We choose as follows the element $\nua$ in $\Omega_{\bal, \ad}$ whose image in  $(\Omega_{\bal,\ad})_\s$ the parametrizes the inner twist $G^*$ of $G$. If $G_{\bF, \ad}$ is of type $A_{2n+1}$ or $D_{2n+1}$ for some $n \in \BN$ and the $\s$-action on $\Omega_{\bal, \ad}$ is nontrivial, then $\Omega_{\bal,\ad}$ is nontrivial, then $(\Omega_{\bal,\ad})_{\sigma}=\BZ/2 \BZ$. In this case, we take $\nua=1$ if $G^*$ is quasi-split over $F$ and $\nua=\nuag$ if $G^*$ is not quasi-split over $F$. Here $\nuag$ is the generator of $\Omega_{\bal, \ad}$ listed in Table \ref{tab:tableOmega}. In other cases, we may take $\nua$ to be any element in $\Omega_{\bal, \ad}$ that corresponds to the $F$-isomorphism class of $G^*$. The choice of $\nua$ is not essential, but will simplify some calculations in the rest of this section. 

Let $\nua =t_\bea \bz$. We construct suitable liftings of $t_{\bea}$ and $\bz$. 

The lifting of $t_{\bea}$ is constructed as follows. Note that the quotient $X_*(T_\ad)/j(X_*(T))$ is finite.  Consider the element $t_{\bea} \in X_*(T_{\ad})$. Let $k\geq 1$ be the smallest integer such that $k\bea \in j(X_*(T))$. Write
\[k\bea = j(\be) \text { for some } \be \in X_*(T).\]

Let $ \bnu = t_\be \bz.$ Note that $\bnu \in \bW$, but need not lie in $\Omega_{\bal}$. We know that $n_\bea =\bea(\varpi_F)$.  Set 
\begin{align}\label{gbedef}
    g_\be = \be(\varpi_F^{1/k}).
\end{align}
Note that $g_\be \in T(\bar F) \subset G(\bar F)$ and $j(g_\be) = n_\bea.$ Note that for each root $\ba \in \breve\Phi(G,T)$, we have $\frac{\langle \ba, \be \rangle}{k} \in \BZ$ because $j(\ba) = \ba$ and $\langle \ba, \be \rangle = \langle j(\ba), j(\be) \rangle = k \langle \ba, \bea \rangle \in k\BZ$. In particular, conjugation by $g_\be$ preserves $G(\breve F)$. 

Now we construct a lifting $g_{\bz}$ of $\bz$ in $\breve \CT$. If $G_{\bF,\ad}$ is of type $D_n$ with $n$ even, then we set $g_{\bz}=n_{\bz}$. Otherwise, $\Omega_{\bal, \ad}$ is a cyclic group. From our construction, $\nua =\nuag^i$ for $0 \le i<|(\Omega_{\bal, \ad})_\s|$. We then have $\bz=\bzg^i$. Set $g_{\bz}=n_{\bzg}^i$.  Let $g_\bnu= g_\be g_\bz \in G(\bar F)$. We set $$\s^*=\Ad(g_{\bnu}) \circ \s.$$ 

Next, suppose $G_{\bF,\ad}$ is not simple. 

By our assumption $G_{F, \ad}$ is simple. We may write $G_{\ad} =\Res_{L_k/F} G_{\ad}'$, where $L_k$ is a finite unramified extension of $F$ of degree $k$ contained in $\bF$ and $G'_{\bF, \ad}$ is $\bF$-simple. Then 
\begin{equation}\label{adjnonsimple}
    G_{\bF, \ad} =G_{\bF, \ad}^{(1)} \times \cdots \times G_{\bF, \ad}\ik,
\end{equation} where $G_{\bF, \ad}^{(1)} \cong \cdots \cong G_{\bF, \ad}\ik \cong G'_{\bF, \ad}$ and the action of $\s$ permutes transitively the simple factors $G_{\bF, \ad}^{(1)}, \ldots, G_{\bF, \ad}\ik$. We may also write $\Omega_{\bal, \ad} = \Omega_{\bal, \ad}^{(1)} \times \cdots \times \Omega_{\bal, \ad}\ik$, where $\Omega_{\bal, \ad}^{(1)} \cong \cdots \cong \Omega_{\bal,\ad}\ik $ are as in Table \ref{tab:tableOmega}. Then the projection map $\Omega_{\bal, \ad}^{(1)} \to (\Omega_{\bal, \ad})_{\sigma}$ is surjective. In fact, $(\Omega_{\bal, \ad}^{(1)})_{\sigma^k} \xrightarrow{\cong} (\Omega_{\bal, \ad})_{\sigma}$.  Let $\nua = t_\bea \bz \in \Omega_{\bal, \ad}^{(1)}$ such that its image in $(\Omega_{\bal, \ad})_{\sigma}$ parametrizes the isomorphism class of $G^*$. We construct $g_{\bnu} \in G(\bar F)$ as above. More precisely, write $\nua = t_\bea \bz$. Then $t_\be$ and $g_\be$ have been constructed in \eqref{gbedef}. If $G_{\bF,\ad}^{(1)}$ is of type $D_n$ with $n$ even, then we set $g_{\bz}=n_{\bz}$. Otherwise we have $\nua=\nuag^i$ for $0 \le i<|\left(\Omega_{\bal, \ad}^{(1)}\right)_{\s^k}|$.  We also have $\bz=\bzg^i$. Set $g_{\bz}=n_{\bzg}^i$. Let $g_\bnu = g_\be g_\bz$ and let $\sigma^* = Ad(g_\bnu) \circ \sigma$.

\subsubsection{The action of $\s^*$ on $\breve \CT$}\label{identitiestp} It is easy to see that $\breve S_2$ is stable under the action of $\s^*$. For each $\blambda \in X_*(T)$, we have 
\begin{equation}\label{lem1:trans} 
\sigma^*(n_\blambda) = \Ad(g_\bz)(n_{\sigma(\lambda)}) = n_{\bz(\sigma(\lambda))} = n_{\sigma^*(\blambda)}.
\end{equation}

Note that $\sigma^*$ acts as $\Ad(\bea) \circ \sigma$ on $\bW_{\ad}$. So for any $\by \in \bW_0$, $\sigma^*(\by) = t_{\bea - \by'(\bea)} \by'$, where $\by' = \Ad(\bz)(\sigma(\by))$. Note that $\bea - \by'(\bea)\in \BZ\Phi^\vee(G,T)$.

Since $\breve \CT$ is generated by $\breve S_2$, $m(\by)$ for $\by \in \bW_0$ and $n_{\blambda}$ for $\blambda \in X_*(T)$, by the following lemma, we have $$\s^*(\breve \CT)=\breve \CT.$$ 

\begin{lemma}\label{lem2:trans} Let $\by \in \bW_0$. Then for any lifting $m(\by) \in \breve\CT$, we have \[\sigma^*(m(\by)) = n_{\bea - \by'(\bea)} g_\bz \sigma(m(\by))g_\bz^{-1}.\] 
\end{lemma}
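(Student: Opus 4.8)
The plan is a direct computation unwinding the definitions $\s^*=\Ad(g_\bnu)\circ\s$ and $g_\bnu=g_\be g_\bz$; the real content is to see how conjugation by the auxiliary element $g_\be=\be(\varpi_F^{1/k})$ moves a lift of an element of $\bW_0$. First I would record the two properties of the factors that will be used: $g_\bz\in\breve\CT\subset N_G(S)(\bF)$ is a lift of $\bz\in\bW_0$, while $g_\be\in T(\bar F)$ centralises $T$ and normalises $G(\bF)$ (the latter because $\langle\ba,\be\rangle/k\in\BZ$ for every root $\ba$, as noted when $g_\be$ was introduced in \eqref{gbedef}). Given a lift $m(\by)$ of $\by\in\bW_0$, I would first peel off $g_\bz$: set $h:=\Ad(g_\bz)\bigl(\s(m(\by))\bigr)$. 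Since $\s(m(\by))\in N_G(S)(\bF)$ lifts $\s(\by)\in\bW_0$ and $g_\bz$ lifts $\bz$, the element $h$ lies in $N_G(S)(\bF)$ and is a lift of $\by'=\Ad(\bz)(\s(\by))$.

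It then remains to evaluate $\Ad(g_\be)(h)=g_\be h g_\be^{-1}$. Writing this as $\bigl(g_\be\cdot\Ad(h)(g_\be)^{-1}\bigr)h$ and using that $h$ normalises $T$ and acts on $X_*(T)$ as $\by'$, one gets $\Ad(h)(g_\be)=\by'(\be)(\varpi_F^{1/k})$, whence
\[
\Ad(g_\be)(h)=\bigl(\be-\by'(\be)\bigr)(\varpi_F^{1/k})\cdot h .
\]
The toral prefactor is then identified as follows: $\be-\by'(\be)$ lies in the coroot lattice $\BZ\Phi^\vee(G,T)$, on which $j$ is injective; since $j(\be)=k\bea$, we have $j(\be-\by'(\be))=k\bigl(\bea-\by'(\bea)\bigr)$, and therefore $\be-\by'(\be)=k\cdot(\bea-\by'(\bea))$ as elements of $\BZ\Phi^\vee(G,T)\subset X_*(T)$. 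Consequently $\bigl(\be-\by'(\be)\bigr)(\varpi_F^{1/k})=\bigl(\bea-\by'(\bea)\bigr)(\varpi_F)=n_{\bea-\by'(\bea)}$, and combining with the previous step gives $\s^*(m(\by))=\Ad(g_\be)(h)=n_{\bea-\by'(\bea)}\,g_\bz\,\s(m(\by))\,g_\bz^{-1}$, as claimed.

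Two remarks on the shape of the argument. First, nothing in the computation uses more about the lift $m(\by)$ than that it lies in $N_G(S)(\bF)$ and projects to $\by$, so no separate argument is needed for an arbitrary lift in $\breve\CT$ (equivalently, replacing $m(\by)$ by $c\,m(\by)$ with $c\in\breve S_2\subset T(\bF)$ changes both sides by the same amount, since $n_{\bea-\by'(\bea)}$ and $\breve S_2$ lie in the abelian group $T(\bF)$). Second, the only step requiring genuine care is the identification of the toral prefactor: the preimage $\be$ of $k\bea$ is not canonical, and the point is precisely that its contribution $\be-\by'(\be)$ already lies in the coroot lattice, where $j$ is injective, so the answer is forced to depend only on $\bea-\by'(\bea)\in X_*(T)$; along the way one must keep track of the powers of $\varpi_F^{1/k}$ so that they cancel to give $n_{\bea-\by'(\bea)}=(\bea-\by'(\bea))(\varpi_F)$. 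The standard inputs $\Ad(h)(\mu(x))=(\by'(\mu))(x)$ for $h\in N_G(S)(\bF)$ lifting $\by'$, and the fact that $\s$ stabilises $N_G(S)(\bF)$ and $\bW_0$, are used without further comment.
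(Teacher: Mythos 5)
Your proof is correct, and the overall shape (unwind $\s^*=\Ad(g_\be g_\bz)\circ\s$; commute $g_\be$ past a lift $h$ of $\by'$; identify the toral prefactor $(\be-\by'(\be))(\varpi_F^{1/k})$ with $n_{\bea-\by'(\bea)}$) is the same as the paper's. The one genuine difference is at the key step: you observe directly that $\be-\by'(\be)$ lies in the coroot lattice $\BZ\breve\Phi^\vee(G,T)$ (the standard fact that $\lambda - w\lambda \in \BZ\Phi^\vee$ for any cocharacter $\lambda$ and Weyl element $w$), where $j$ restricts to the identity, so $\be-\by'(\be)=j(\be-\by'(\be))=k(\bea-\by'(\bea))$ with no ambiguity. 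The paper instead only records $j(\be-\by'(\be))=k\cdot j(\bea-\by'(\bea))$, introduces a discrepancy $\bmu\in X_*(Z^0)=\ker j$, and then kills $\bmu$ by averaging the telescoping identity $(\by')^{i}(\be)-(\by')^{i+1}(\be)=k\bigl((\by')^{i}(\bea)-(\by')^{i+1}(\bea)\bigr)+\bmu$ over the cyclic group $\langle\by'\rangle$. Your route removes the need for that auxiliary telescoping argument and is therefore a small but genuine simplification; it buys a shorter proof at the cost of invoking (without explicit proof, though it is elementary and standard) the $W$-invariance statement $\lambda-w\lambda\in\BZ\Phi^\vee$. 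Your remark that the claim holds for any lift $m(\by)\in N_G(S)(\bF)$ of $\by$, since both sides change by the same central/toral factor, is a correct and useful observation that the paper does not spell out.
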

\begin{proof} We have $\Ad(\bnu)(\sigma(\by)) = t_{\be - \by'(\be)} \bz \sigma(\by) \bz^{-1}$. Now 
\begin{align*} j(\be -\by'(\be)) &=j(\be)-j(\by'(\be))=k(\bea)-\by'(j(\be))=k(\bea)-k \by'(\bea) \\
&=k(\bea - \by'(\bea)) = k(j(\bea - \by'(\bea))).
\end{align*}
Here the last equality follows from the fact that $\bea - \by'(\bea) \in \BZ\breve\Phi^\vee(G,T)$ and the restriction of the map $j$ to $\BZ\breve\Phi^\vee(G,T)$ (which is just $X_*(T_{sc})$) is the identity map. 
Hence there exists $\bmu \in X_*(Z^0)$ such that 
\[\be - \by'(\be) = k(\bea - \by'(\bea)) +\bmu.\] 

Since $\bmu \in X_*(Z^0)$, $\by'(\bmu)=\bmu$ and thus 
\[(\by')^{i}(\be) - (\by')^{i+1}(\be) = k((\by')^{i}(\bea) - (\by')^{i+1}(\bea))+\bmu\] for any $i$. Let $l$ be the order of $\by'$. Then 
\begin{align*}
l \bmu=\sum_{i=0}^{l-1} \bigl(k((\by')^{i}(\bea) - (\by')^{i+1}(\bea))+\bmu\bigr)=\sum_{i=0}^{l-1} \bigl((\by')^{i}(\be) - (\by')^{i+1}(\be)\bigr)=0.
\end{align*}

So $\bmu=0$ and $\be - \by'(\be) = k(\bea - \by'(\bea))$. Then
\begin{align*}
    \sigma^*(m(\by)) = g_\be \Ad(\by')(g_{\be}\i) g_\bz \sigma(m(\by))g_{\bz}^{-1}. 
\end{align*}
It remains to show that $g_\be \Ad(\by')(g_{\be}\i) =n_{\bea - \by'(\bea)}$. 

With the definition of $g_\be$ in \eqref{gbedef}, we have \begin{align*}
    g_\be \Ad(\by')(g_{\be}\i) &= (\be - \by'(\be))(\varpi_F^{1/k})= (\tilde\eta_{\ad} - \by'(\tilde\eta_\ad))(\varpi_F) = n_{\bea - \by'(\bea)}.
\end{align*}
This finishes the proof of the lemma. 
\end{proof}

Now we state the main result of this section. 

\begin{theorem}\label{thm:tits-F}
Let $G$ be a connected reductive group, quasi-split over $F$ and split over $\breve F$. Let $\s^*$ be the Frobenius morphism associated to a given $F$-isomorphism class of inner twists of $G$. Then $\breve \CT^{\s^*}$ is a Tits group of the Iwahori-Weyl group of $G(\breve F)^{\s^*}$. 
\end{theorem}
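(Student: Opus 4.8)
The strategy is to verify the two defining conditions of Definition \ref{def:tits2} for the subgroup $\breve\CT^{\s^*}\subset N_G(A')(F^*)$, where $F^*$ is the field cut out by $\s^*$ and $A'$ is a maximal $F^*$-split torus. By construction (\S\ref{FMIF}, Lemma \ref{lem2:trans}), $\breve\CT$ is $\s^*$-stable, so taking $\s^*$-fixed points makes sense. I would first record the identification $W^* := \bW^{\s^*}$ with the Iwahori-Weyl group of $G(\bF)^{\s^*}$, using \cite[Lemma 1.6]{Ri} applied to the Frobenius $\s^*$ (the whole point of choosing $\s^*$ inside the same inner class is that this descent statement is available verbatim). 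Then the exact sequence $1\to\breve S_2\to\breve\CT\xrightarrow{\breve\phi}\bW\to 1$ is $\s^*$-equivariant, and the heart of condition (1) is the surjectivity of $(\breve\phi)^{\s^*}:\breve\CT^{\s^*}\to\bW^{\s^*}=W^*$, i.e. vanishing (or controllability) of the connecting map $H^1(\langle\s^*\rangle,\breve S_2)$. Since $\breve S_2$ is a finite elementary abelian $2$-group this $H^1$ need not vanish, so the argument must produce, for each $w\in W^*$, an actual $\s^*$-fixed lift — which is exactly what the case-by-case construction of \S\ref{constaubs} (invoked implicitly here) is designed to do.

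**Step-by-step.**
First I would treat $W_\af^{\s^*}$. The set $\{n_{\bs}\mid \bs\in\bBS\}$ was arranged in Proposition \ref{sigmarepsWaf} to be $\s^*$-stable on each finite orbit $\CX$; for such an orbit the product $\bw_\CX = \prod_{\bs\in\CX}\cdots$ (longest element) has a canonical lift $m(\bw_\CX)=\prod n_{\bs}$ which is $\s^*$-fixed because $\s^*$ cyclically permutes the factors and, by the Coxeter/braid relations of Lemma \ref{TitsWaf} together with $n_\bs^2=\bb^\vee(-1)$, the product of a $\s^*$-orbit of simple-reflection lifts is independent of the cyclic starting point. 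This gives $\s^*$-fixed lifts $m(s)$ for every $s\in\BS$, and by Lemma \ref{TitsWaf} and Ri's length criterion (a) in \S\ref{sec:WF}, the assignment $s\mapsto m(s)$ extends multiplicatively along reduced words to a Tits cross-section $\{m(w)\}_{w\in W_\af}$ of $W_\af$ in $\breve\CT_\af^{\s^*}$, all of whose members are $\s^*$-fixed; this also yields condition (2)(a) over $F$ since $m(s_a)^2=m(\bw_\CX)^2$ and a direct computation in the rank-one/rank-two subsystem gives $b^\vee(-1)$. Next I would handle the translation part: for $\blambda\in X_*(T)^{\s^*}$ (more precisely $X_*(T)_{\Gamma_0}^{\s^*}$ lifted suitably) we have $\s^*(n_\blambda)=n_{\s^*(\blambda)}=n_\blambda$ by \eqref{lem1:trans}, giving $\s^*$-fixed lifts of the lattice part of $\Omega_{\bal}^{\s^*}$.

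**The main obstacle: the length-zero part $\Omega_{\bal}^{\s^*}$.**
The hard step — and the one the introduction explicitly flags — is producing, for each $\btau\in\Omega_{\bal}^{\s^*}$, a lift $m(\btau)\in\breve\CT$ with $\s^*(m(\btau))=m(\btau)$. The natural lift $n_\btau = \be(\varpi_F) n_{\by}$ (writing $\btau=t_\be\by$) satisfies only $\s^*(n_\btau)=c\cdot n_\btau$ for some a priori uncontrolled $c\in\breve S_2$; by Lemma \ref{lem2:trans} one has the explicit formula $\s^*(m(\by))=n_{\bea-\by'(\bea)}\,g_\bz\,\s(m(\by))\,g_\bz^{-1}$, and the whole construction of $\s^*=\Ad(g_\bnu)\circ\s$ via the element $\nua$ of Table \ref{tab:tableOmega} was rigged so that this cocycle $c$ can be killed after multiplying by a suitable element of $\breve S_2$ — this is where the case analysis by Dynkin type ($A_n$, $B_n$, $C_n$, $D_n$, $E_6$, $E_7$, restriction of scalars) enters. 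I would organize this as: reduce to $G_{\bF,\ad}$ $\bF$-simple (the $\Res_{L_k/F}$ case follows by replacing $\s$ with $\s^k$ as in \S\ref{Conssigma*}); for each type compute $\s^*$ on the generator(s) $\bnu_{\ad,(i)}$ and exhibit a $\breve S_2$-correction making the lift $\s^*$-fixed, using that $\s^*$ acts on $\bW_\ad$ as $\Ad(\bea)\circ\s$ and that $\bea-\by'(\bea)\in\BZ\breve\Phi^\vee(G,T)$. Once every $\btau\in\Omega_{\bal}^{\s^*}=\Omega_{\al^*}$ has a $\s^*$-fixed lift, surjectivity of $(\breve\phi)^{\s^*}$ onto $W^*=W_\af^{\s^*}\rtimes\Omega_{\bal}^{\s^*}$ follows, injectivity of $\breve S_2^{\s^*}=S_2^*$ into $\breve\CT^{\s^*}$ is immediate, and condition (2)(b) over $F$ follows from condition (2)(b) over $\bF$ restricted to $\s^*$-fixed reduced expressions together with Ri's equality of length functions (a). This completes the verification that $\breve\CT^{\s^*}$ is a Tits group of the Iwahori-Weyl group of $G(\bF)^{\s^*}$.
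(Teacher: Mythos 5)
Your proposal follows essentially the same route as the paper: you invoke Proposition \ref{sigmarepsWaf} for $\s^*$-stable lifts on orbits, build $m(s)$ for $s\in\BS$ from the longest element $\bw_\CX$ of a finite $\s^*$-orbit, extend along reduced words via the Coxeter relations and Richarz's length criterion, and correctly flag the lifting of $\Omega_{\bal}^{\s^*}$ as the hard step requiring Lemma \ref{lem2:trans} plus a type-by-type argument. One small imprecision: your justification that $m(\bw_\CX)=\prod n_\bs$ is ``independent of the cyclic starting point'' is not how the $\s^*$-fixedness of $m(\bw_\CX)$ actually arises — $\bw_\CX$ is the longest element, not the product of the orbit elements; the correct argument (as in the paper) is that $\s$ sends a reduced expression $\bs_{i_1}\cdots\bs_{i_n}$ of $\bw_\CX$ to another reduced expression $\s(\bs_{i_1})\cdots\s(\bs_{i_n})$ of the same element, and the Tits cross-section is independent of the choice of reduced expression.
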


In the rest of this section, we will prove this theorem. The proof involves, among other things, some identities on the finite Tits groups, which we now summarize.

\subsection{Some identities in finite Tits groups} In this subsection, let $\mathfrak{F}$ be any field and let $G$ be a split, connected, reductive group over $\mathfrak{F}$. Let $\CT_{\fin}$ be the Tits group of the absolute Weyl group $W_0$ of $G(\mathfrak{F})$ and $\{n_w\}_{w \in W_0}$ is a Tits cross-section of $W$ on $\CT_{\fin}$. In the application to the proof of Theorem \ref{thm:tits-F}, $\mathfrak{F}$ is the field $\breve F$ and $\CT_{\fin}$ is the subgroup of $\breve \CT$ generated by $n_{\bs}$ for $\bs \in \breve W_0$. However, the identities on the finite Tits group hold in the general setting. 

Let $\{s_i\;|\; i \in I\}$ be the set of simple reflections of the absolute Weyl group and $\{a^\vee_i\;|\;i \in I\}$ be the set of simple coroots. Then $n_{s_i}^2=a^\vee_i(-1)$. For any subset $J \subset I$, let $\rho^\vee_J$ be the half sum of positive coroots spanned by $\{a_j^\vee\;|\; j \in J\}$ and $y_J$ be the maximal element in the subgroup generated by $s_i$ for $i \in J$. We will simply write $\rho^\vee$ for $\rho^\vee_{I}$. For any $i \in I$, we set $$y_{(i)}=y_{I\backslash \{i\}} y_{I}.$$ We are interested in the power of $n_{y_{(i)}}$ when $\o^\vee_i$ is a minuscule coweight. 
This is calculated using the following result.  

\begin{proposition}[Proposition 3.2.1 of \cite{Ro}]\label{Ros}
Let $u, v \in W_0$. Then $$n_u n_v=n_{u v} \; \Pi_{a>0, v(a)<0, u v(a)>0} a^\vee(-1).$$
\end{proposition}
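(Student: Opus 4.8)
The plan is to prove the identity by induction on $\ell(v)$, at each stage peeling a simple reflection $s=s_i$ off the right end of $v$ and pushing the resulting factor $n_s$ past the error term. The two facts that make this work are: first, $n_s$ normalizes the torus and maps to $s\in W_0$, so that conjugation gives $n_s\,a^\vee(-1)\,n_s\i=(sa)^\vee(-1)$ for every root $a$; second, $S_2=\langle a^\vee(-1)\rangle$ is an elementary abelian $2$-group normalized by $\CT_{\fin}$, so the products $\prod a^\vee(-1)$ are well defined regardless of the order of the factors and of the sign of each root (as $(-a)^\vee(-1)=a^\vee(-1)$). I also use freely that $n_w=n_{s_{j_1}}\cdots n_{s_{j_r}}$ for any reduced word $w=s_{j_1}\cdots s_{j_r}$ and that $n_{s_j}^2=a_j^\vee(-1)$.

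For $\ell(v)=0$ both sides equal $n_u$ and the indexing set is empty. For the inductive step, assume $\ell(v)\ge 1$, choose $s=s_i$ with $v(\alpha_i)<0$, and put $v'=vs$, so that $v=v's$, $\ell(v')=\ell(v)-1$, $v'(\alpha_i)>0$ and $n_v=n_{v'}n_s$. Applying the inductive hypothesis to $n_u n_{v'}$ and conjugating the error term past $n_s$ gives
\[
n_u n_v \;=\; (n_{uv'} n_s)\prod_{\substack{\beta>0\\ v'(\beta)<0,\ uv'(\beta)>0}}(s\beta)^\vee(-1).
\]
Now there are two cases according to whether $\ell(uv's)=\ell(uv')+1$ or $\ell(uv's)=\ell(uv')-1$: in the first $n_{uv'}n_s=n_{uv}$, and in the second $n_{uv'}n_s=n_{uv}\,\alpha_i^\vee(-1)$ (extend, resp.\ contract, a reduced word and use $n_s^2=\alpha_i^\vee(-1)$). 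Substituting $\gamma=s\beta$ and using $v=v's$ rewrites the remaining product as $\prod\gamma^\vee(-1)$ over $\gamma\in s(\Phi^+)$ with $v(\gamma)<0$, $uv(\gamma)>0$. Since $s(\Phi^+)$ differs from $\Phi^+$ only by replacing $\alpha_i$ with $-\alpha_i$, and $a^\vee(-1)$ ignores the sign of $a$, this product agrees with the one over $\Phi^+$ up to the possible contribution of the line $\pm\alpha_i$; a direct check from $v'(\alpha_i)>0$ shows that $-\alpha_i$ never satisfies the condition, while $\alpha_i$ satisfies it precisely when $uv'(\alpha_i)<0$, i.e.\ precisely in the length-decreasing case. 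In the length-increasing case the discrepancy is therefore trivial, and in the length-decreasing case it is exactly the extra $\alpha_i^\vee(-1)$ coming from $n_{uv'}n_s$; either way one lands on $n_u n_v=n_{uv}\prod_{\gamma>0,\,v(\gamma)<0,\,uv(\gamma)>0}\gamma^\vee(-1)$, completing the induction.

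I expect the only genuinely delicate point to be this last bookkeeping: matching the difference between $s(\Phi^+)$ and $\Phi^+$ at the simple root $\alpha_i$ against the extra factor $n_s^2=\alpha_i^\vee(-1)$ produced (or not) when collapsing $n_{uv'}n_s$. Everything else is formal given the relation $n_s\,a^\vee(-1)\,n_s\i=(sa)^\vee(-1)$ and the independence of $n_w$ from the chosen reduced word.
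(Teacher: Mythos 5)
The paper provides no proof of this statement: it is quoted verbatim from Rostami \cite{Ro}, so there is nothing internal to compare against. That said, your argument is a correct and self-contained proof. The induction base is trivial; the inductive step hinges on three checks which you carry out correctly: (i) $n_v=n_{v'}n_s$ for $v'=vs$, $\ell(v')=\ell(v)-1$; (ii) conjugation by $n_s$ carries $\beta^\vee(-1)$ to $(s\beta)^\vee(-1)$ and preserves the abelian group $\langle a^\vee(-1)\rangle$, so the error factor can be slid past $n_s$; and (iii) the change of index $\gamma=s\beta$ turns the product into one over $\gamma\in s(\Phi^+)$ with $v(\gamma)<0$, $uv(\gamma)>0$, and since $s(\Phi^+)$ and $\Phi^+$ differ only at $\pm\alpha_i$, the discrepancy is governed by whether $\alpha_i$ lies in the $\Phi^+$-indexed set, i.e.\ whether $uv(\alpha_i)>0$, i.e.\ whether $uv'(\alpha_i)<0$, which is exactly the length-decreasing case where $n_{uv'}n_s=n_{uv}\alpha_i^\vee(-1)$ produces the compensating factor. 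I verified these equivalences and they all match. The only cosmetic suggestion: when you invoke ``$a^\vee(-1)$ ignores the sign of $a$,'' you don't in fact need it for the substitution $\gamma=s\beta$ (which is a pure reindexing), only to observe that $-\alpha_i\in s(\Phi^+)$ never contributes because $v(\alpha_i)<0$ by choice of $s$; stating the two index-set computations separately would make that cleaner. Rostami's own proof is likewise an induction that peels a simple reflection off one side and tracks the resulting torus element, so your route is essentially the standard one.
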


The following corollary is an easy consequence of the proposition above and some results in \cite[\S 3]{Ad}. 
\begin{corollary}\label{cor:FTG}
\begin{enumerate}[(1)]
\item Suppose that $G_{\mathfrak F, \ad}$ is $\mathfrak F$-simple. Let $\o^\vee_i$ be a minuscule coweight and $k$ be the order of $y_{(i)}$ in $W_0$. Then $n_{y_{(i)}}^k$ is the center of $G(\mathfrak F)$. 
\item Suppose that $G$ is of type $A_n$. For $0 \le i \le n$, 
    \[n_{y_{(1)}}^{i+1} = \begin{cases} n_{y_{(1)}^{i+1}}, & \text{ if } i \text{ is even},\\
    (a_1 ^\vee + a_3^{\vee} + \cdots + a_{i}^\vee)(-1) n_{y_{(1)}^{i+1}}, & \text{ if } i \text{ is odd}.
    \end{cases} \]
\item Suppose that $G$ is of type $D_n$ with $n$ odd. Then 

(a) \[n_{y_{(n)}}^2 = \begin{cases} 
    (a_2^\vee +a_4^\vee + \cdots +a_{n-1}^\vee)(-1) n_{y_{(1)}}, & \text{ if } n \equiv 1 \mod 4, \\ 
    (a_2^\vee +a_4^\vee + \cdots +a_{n-3}^\vee+a_n^\vee)(-1) n_{y_{(1)}}, & \text{ if } n \equiv 3 \mod 4.
\end{cases} \]

(b) \[n_{y_{(n)}}^3 = \begin{cases} 
    (a_{n-1}^\vee+a_n^\vee)(-1) n_{y_{(n-1)}}, & \text{ if } n \equiv 1 \mod 4, \\ 
    n_{y_{(n-1)}}, & \text{ if } n \equiv 3 \mod 4.
\end{cases} \]

(c) $n_{y_{(n)}}^4=(a^\vee_{n-1}+a^\vee_n)(-1)$.

\item Suppose that $G_{\mathfrak F, \ad}$ is of type $D_{2n}$ and $\{i, j, k\}=\{1, 2n-1, 2n\}$. Then there exists a central element $z$ of $G(\mathfrak F)$ such that \[n_{y_{(i)}}n_{y_{(j)}} = z\cdot n_{y_{(k)}}= n_{y_{(j)}}n_{y_{(i)}}.\]
\end{enumerate}
\end{corollary}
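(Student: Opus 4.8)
The plan is to reduce all four parts to one mechanism: iteration of Rostami's identity (Proposition~\ref{Ros}) along the cyclic group generated by $y_{(i)}=y_{I\setminus\{i\}}y_I$, followed by an evaluation of the resulting $T_2$-correction via the explicit description of the elements $y_{(i)}$ attached to minuscule coweights, together with the computations of the relevant lifts $n_{y_J}$ recorded in \cite[\S 3]{Ad}. Concretely, for $w\in W_0$ of finite order write $n_w^{m}=c_m\,n_{w^m}$ with $c_m\in T_2$ and $c_1=1$. Proposition~\ref{Ros} with $u=w$, $v=w^m$ gives $n_w n_{w^m}=n_{w^{m+1}}\prod_{a\in A_m}a^\vee(-1)$ with $A_m=\{a\in\Phi^+:\ w^m(a)\in\Phi^-,\ w^{m+1}(a)\in\Phi^+\}$, hence $c_{m+1}={}^{w}c_m\cdot\prod_{a\in A_m}(w^{m+1}a)^\vee(-1)$ and $c_m=\prod_{j=1}^{m-1}{}^{w^{m-1-j}}\!\bigl(\prod_{a\in A_j}(w^{j+1}a)^\vee(-1)\bigr)$; in particular $n_w^{\ord(w)}=\prod_{a}a^\vee(-1)$, the product taken over the multiset of positive roots $a$ at which the cyclic sign sequence $w^0(a),w^1(a),\dots$ passes from $\Phi^-$ to $\Phi^+$. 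Everything then reduces to identifying these sets and the coroot sums they produce in each relevant type.

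For parts (2)--(4) the needed structure of $y_{(i)}$ is classical. In type $A_n$, $y_{(1)}$ is the long cycle $e_k\mapsto e_{k+1}$ (indices mod $n+1$), so $y_{(1)}^{j}$ sends $e_p-e_q\mapsto e_{p+j}-e_{q+j}$ and a direct count pins down each $A_j$; substituting into the formula for $c_{i+1}$ and inducting on $i$ shows that $c_{i+1}$ is trivial for $i$ even and equals $(a_1^\vee+a_3^\vee+\cdots+a_i^\vee)(-1)$ for $i$ odd, which is (2). In type $D_n$ with $n$ odd, $y_{(n)}$ is an explicit signed permutation of order $4$ with $y_{(n)}^2=y_{(1)}$ and $y_{(n)}^3=y_{(n-1)}$, and the same bookkeeping applied to $y_{(n)},y_{(n)}^2,y_{(n)}^3,y_{(n)}^4$ gives (a), (b), (c). In type $D_{2n}$, $y_{(1)},y_{(2n-1)},y_{(2n)}$ are the three nontrivial order-$2$ diagram automorphisms of the affine diagram: they pairwise commute and $y_{(i)}y_{(j)}=y_{(k)}$ for $\{i,j,k\}=\{1,2n-1,2n\}$, so Proposition~\ref{Ros} gives $n_{y_{(i)}}n_{y_{(j)}}=z\cdot n_{y_{(k)}}$ with $z=\prod_a a^\vee(-1)$ over a $W_0$-stable set of positive roots, whence $z\in Z(G)(\mathfrak F)$; since $y_{(i)}$ and $y_{(j)}$ commute, the product in the opposite order differs from $z\cdot n_{y_{(k)}}$ by an element of $T_2$ which one checks to be trivial (or which follows from $n_{y_{(i)}}^2\in Z(G)$, given by part (1)), yielding (4).

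For part (1), $y_{(i)}^k=1$ forces $n_{y_{(i)}}^k=c_k\in T_2$, so only centrality remains; as centrality of an element of $T_2$ is a condition on coroot sums, it is independent of $\mathfrak F$. In the classical types it is immediate from parts (2)--(4): the coroot combinations occurring there evaluate at $-1$ to central elements, e.g. $(a_1^\vee+a_3^\vee+\cdots+a_n^\vee)(-1)=-\mathrm{id}_{n+1}$ in type $A_n$ with $n+1$ even and $(a_{n-1}^\vee+a_n^\vee)(-1)=(2e_{n-1})(-1)=\mathrm{id}$ in type $D_n$, while the types $B_n,C_n,E_6,E_7$, where $y_{(i)}$ has order $2$ or $3$, are handled by one application of Proposition~\ref{Ros}. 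Alternatively there is a uniform argument: $\bnu_{\ad,(i)}=t_{\o_i^\vee}y_{(i)}$ has finite order in $\Omega_{\bal,\ad}$, so $\sum_{0\le j<k}y_{(i)}^{j}(\o_i^\vee)=0$; choosing (as in Proposition~\ref{sigmarepsWaf}) an affine pinning for which $\Ad(n_{\bnu_{\ad,(i)}})$ permutes $\{n_{s_\ba}:\ba\in\breve\Delta\}$ in step with the diagram automorphism $\bnu_{\ad,(i)}$, one gets $n_{y_{(i)}}^k=n_{\bnu_{\ad,(i)}}^k$, which then commutes with every $n_{\bs}$, $\bs\in\bBS_0$, is $W_0$-fixed, and hence central.

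The genuine work is the case-by-case bookkeeping underlying parts (2)--(4): determining the sign-change sets $A_j$ for the powers of $y_{(i)}$ in types $A$ and $D$, and recognizing the coroot sums $\prod_{a\in A_j}(\cdots)^\vee(-1)$, after transport by the relevant powers of $y_{(i)}$, as the advertised alternating sums of simple coroots (together with the centrality checks in part (1)). This is precisely the content imported from the explicit computations of the lifts $n_{y_J}$ in \cite[\S 3]{Ad}, which is why the corollary is ``an easy consequence'' once those are available.
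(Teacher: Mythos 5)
Your overall route is the same one the paper takes: derive everything from Proposition~\ref{Ros} by iterating the cocycle recursion for $n_w^m=c_m\,n_{w^m}$, and import the explicit bookkeeping from \cite[\S 3]{Ad}. The paper's proof is just terser about where that bookkeeping lives. For parts~(2) and~(3) the paper literally says ``direct consequences of Proposition~\ref{Ros}'', which is what you make precise; for part~(4) the paper cites \cite[proof of Theorem~3.5]{Ad}, and for part~(1) it cites \cite[Proposition~3.3]{Ad} ($n_{y_{(i)}}^{k}=1$ when $G$ is almost simple) and deduces centrality for general $G$ via the map to $G_{\ad}$. Your recursion $c_{m+1}={}^{w}c_m\cdot\prod_{a\in A_m}(w^{m+1}a)^\vee(-1)$ is correct and would in principle let one rederive Adrian's computations rather than cite them, so this is the same approach at slightly greater depth.

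Two caveats worth flagging. First, your ``uniform'' alternative for part~(1) invokes the affine pinning and $\Omega_{\bal,\ad}$, but the corollary is stated for a split $G$ over an arbitrary field $\mathfrak F$, with no uniformizer and no affine apartment in sight; and even granting that machinery, the final step (a $W_0$-fixed element of $T_2$ is central) is not automatic — $T^{W_0}$ can strictly contain $Z(G)$ — so some coroot-sum computation would still be needed. Since you also supply the case-by-case route, this is not fatal, but the uniform argument as written does not close. Second, in part~(4) the claim that the sign-change set of positive roots is ``$W_0$-stable, whence $z\in Z(G)$'' is not a valid inference (a subset of $\Phi^+$ is never $W_0$-stable, and centrality of $\prod a^\vee(-1)$ is a condition on $\sum a^\vee$ modulo $2X_*(T)$, not on stability of the index set); the correct justification is precisely the explicit computation in \cite[proof of Theorem~3.5]{Ad}, which you ultimately defer to. With those two points tightened, your proof is sound and matches the paper's.
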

\begin{proof}
All the parts of the corollary are consequences of Proposition \ref{Ros} and some explicit calculations. In the case where $G$ is almost simple over $\mathfrak F$, Adrian showed in \cite[Proposition 3.3]{Ad} that $n_{y_{(i)}}^k=1$, where $k$ is the order of $y_{(i)}$ in $W_0$. This implies (1). Parts (2), (3) are direct consequences of Proposition \ref{Ros}. Part (4) is deduced from \cite[Proof of Theorem 3.5]{Ad}.
\end{proof}

\subsection{The $\s^*$-stable liftings of $\bBS$} \label{verHDS} 

In this subsection, we will prove the following result.

\begin{proposition}\label{prop:power-s}
Let $\bs\in \bBS_0$ and $\CX$ be the $\s^*$-orbit of $\bs$. Then 
$$(\s^*)^{|\CX|}(n_\bs)=n_\bs.$$
\end{proposition}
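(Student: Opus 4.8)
The plan is to rewrite $(\sigma^*)^{|\CX|}(n_\bs)$ as a conjugate of a canonical Tits representative, identify the resulting $\breve S_2$-valued discrepancy, and show it is trivial by a case analysis along Table \ref{tab:tableOmega} using the identities of Corollary \ref{cor:FTG}.

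Set $c = |\CX|$. Recall from \S\ref{FMIF}--\S\ref{Conssigma*} that $\sigma^* = \Ad(g_\bnu)\circ\sigma$ with $g_\bnu = g_\be g_\bz$, and that --- because $\sigma$ is a quasi-split Frobenius, so every $\sigma$-orbit of $\breve\Delta$ is finite --- the affine pinning has been chosen (Proposition \ref{sigmarepsWaf}) so that $\sigma(n_{\bs'}) = n_{\sigma(\bs')}$ for all $\bs' \in \bBS$. Iterating gives $(\sigma^*)^c = \Ad(h_c)\circ\sigma^c$ with $h_c = g_\bnu\,\sigma(g_\bnu)\cdots\sigma^{c-1}(g_\bnu)$, while on $\bW$ one has $(\sigma^*)^c = \Ad(\bnu_c)\circ\sigma^c$ with $\bnu_c = \bnu\,\sigma(\bnu)\cdots\sigma^{c-1}(\bnu)$. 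Since $(\sigma^*)^c$ fixes $\bs$, we get $\sigma^c(\bs) = \bnu_c^{-1}\bs\bnu_c$ and hence
\[ (\sigma^*)^c(n_\bs) \;=\; h_c\, n_{\sigma^c(\bs)}\, h_c^{-1} \;=\; h_c\, n_{\bnu_c^{-1}\bs\bnu_c}\, h_c^{-1}. \]
As $\breve\CT$ is $\sigma^*$-stable, the left-hand side equals $\varepsilon\, n_\bs$ for a unique $\varepsilon \in \breve S_2$, and the task is to show $\varepsilon = 1$. Squaring this identity, and using $n_\bs^2 = \bb^\vee(-1)$, $\sigma^*(\bb^\vee(-1)) = \tau(\bb)^\vee(-1)$ with $\tau = \Ad(\bz)\circ\sigma$ acting on roots, and $\tau^c(\bb) = \pm\bb$, one finds $(\sigma^*)^c(n_\bs^2) = n_\bs^2$, so $\varepsilon$ commutes with $n_\bs$; this constraint alone is too weak (it is vacuous for $\SL_2$), so one must compute $h_c$.

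To that end I would write $h_c = t\cdot n_{*}$, pushing all the $g_\be$-factors to the left past the $g_\bz$-factors, with $t \in T(\bar F)$ and $n_{*} = \prod_{i=0}^{c-1}\sigma^i(g_\bz)\in\breve\CT_{\fin}$, so that $h_c\, n_{\bnu_c^{-1}\bs\bnu_c}\, h_c^{-1} = t\,\big(n_{*}\, n_{\bnu_c^{-1}\bs\bnu_c}\, n_{*}^{-1}\big)\,t^{-1}$. The torus factor $t$ is computed by a manipulation like that in the proof of Lemma \ref{lem2:trans}; on the cases at hand it acts on any reflection representative through twice a coroot, hence trivially --- for $\SL_2$ this is just $h_2 = \bb^\vee(\pm1)$ and $\bb^\vee(\pm1)\,n_\bs\,\bb^\vee(\pm1)^{-1} = \bb^\vee(1)\,n_\bs = n_\bs$. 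It then remains to show that $n_{*}$ conjugates $n_{\bnu_c^{-1}\bs\bnu_c}$ to $n_\bs$; comparing $n_{*}$ --- built from the explicit liftings $g_\bz = n_{\bzg}^i$ (respectively $n_\bz$ when $G_{\bF,\ad}$ has type $D_n$ with $n$ even) and their $\sigma$-translates --- with the canonical lift of its image in $\bW_0$ reduces this to the explicit evaluations of the powers $n_{\by_{(i)}}^k$ provided by Corollary \ref{cor:FTG}. This reduction, verifying that the contribution of $t$ is trivial and that those values of $n_{\by_{(i)}}^k$ conspire so that the $\breve S_2$-error vanishes, is the main obstacle: it is not formal and relies on the specific choices of \S\ref{Conssigma*}. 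I would carry it out case by case along Table \ref{tab:tableOmega} --- types $A_n, B_n, C_n, E_6, E_7$ (with $\Omega_{\bal,\ad}$ cyclic and $g_\bz = n_{\bzg}^i$) via Corollary \ref{cor:FTG}(1)--(2), and the two $D_n$ cases (where $\sigma$ may act nontrivially on $\Omega_{\bal,\ad}$, so $\bnu_c$ is a genuine partial $\sigma$-norm) via Corollary \ref{cor:FTG}(3)--(4) --- with the $\bF$-non-simple case reduced to the $\bF$-simple one as in \S\ref{Conssigma*}, $\sigma^k$ replacing $\sigma$.
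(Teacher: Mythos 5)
Your outline follows the same broad route as the paper: reduce to $G_{\bF,\ad}$ being $\bF$-simple, write $(\sigma^*)^{|\CX|} = \Ad(h_{|\CX|}) \circ \sigma^{|\CX|}$, split $h_{|\CX|}$ into a torus factor and a $\prod_i\sigma^i(g_\bz)$-part, and settle the latter by the case analysis via Corollary \ref{cor:FTG}. This is precisely the paper's reduction to equality \eqref{SCX} followed by the same case-by-case verification.

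There is, however, a genuine gap in your treatment of the torus factor $t$. You assert that $t$ "acts on any reflection representative through twice a coroot, hence trivially", citing $\SL_2$ where $h_2 = \bb^\vee(\pm1)$. But in that example the torus factor from the $g_\be$-parts is already trivial ($\breve\xi = \omega^\vee + s_1(\omega^\vee) = 0$); what you are observing is only that the residual $\breve S_2$-contribution from $n_\bs^2$ squares away. In general $t = \breve\xi(\varpi_F^{1/k})$ involves a genuine $k$-th root of $\varpi_F$, so $\Ad(t)(n_\bs) = n_\bs \cdot \bb^\vee\bigl(\varpi_F^{-\langle\bb,\breve\xi\rangle/k}\bigr)$; this is not an $\breve S_2$-element, and no "twice a coroot" cancellation is available. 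The paper's argument is structural: since $(\sigma^*)^{|\CX|}(\bs)=\bs$, one also has $\bs(\breve\xi_{\ad})=\breve\xi_{\ad}$, i.e.\ $\langle\bb,\breve\xi_{\ad}\rangle=0$, which pulls back (via $j(\breve\xi)=k\breve\xi_{\ad}$) to $\langle\bb,\breve\xi\rangle=0$; hence $\bs(\breve\xi)=\breve\xi$ and $\Ad(\breve\xi(\varpi_F^{1/k}))$ fixes $n_\bs$ outright. Without this observation the torus step does not go through, and the explicit case analysis you defer would have to keep track of arbitrary roots of unity coming from $\sigma(\varpi_F^{1/k})$. One smaller omission: the case $\sigma^*(\bs)=\bs$ (orbit of size one) is handled by the Coxeter relations and Condition (2)(b)$^\dagger$ alone and is separate from the Corollary~\ref{cor:FTG} cases; your sketch folds it into the Table \ref{tab:tableOmega} enumeration.
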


As a consequence, we obtain the following stronger version of Proposition \ref{TitsWaf}. 

\begin{corollary}\label{cor:m(s)}
There exists a set of representatives $\{m(\bs)\;|\; {\bs \in \bBS}\}$ in $\breve \CT$ that is $\sigma^*$-stable. 
\end{corollary}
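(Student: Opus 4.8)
Here is the plan I would follow.

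\medskip

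The plan is to build the required family one $\sigma^*$-orbit of $\bBS$ at a time. Fix a $\sigma^*$-orbit $\CX\subseteq\bBS$ and set $d=|\CX|$. First I would reduce to the following: it is enough to exhibit a single $\bs\in\CX$ together with a lift $m(\bs)\in\breve\CT$ of $\bs$ satisfying $(\sigma^*)^{d}(m(\bs))=m(\bs)$. Indeed, one then puts $m\bigl((\sigma^*)^{j}\bs\bigr):=(\sigma^*)^{j}(m(\bs))$ for $0\le j<d$; this is unambiguous since $\bs,\sigma^*(\bs),\dots,(\sigma^*)^{d-1}(\bs)$ are pairwise distinct, the values lie in $\breve\CT$ because $\breve\CT$ is $\sigma^*$-stable (\S\ref{identitiestp}), they are lifts of the corresponding reflections because $\breve\phi$ is $\sigma^*$-equivariant, and the resulting set of lifts over $\CX$ is $\sigma^*$-stable precisely because $(\sigma^*)^{d}(m(\bs))=m(\bs)$. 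Running this over all orbits and using $\bBS=\bigsqcup_{\CX}\CX$ assembles the desired $\sigma^*$-stable system $\{m(\bs)\mid\bs\in\bBS\}$.

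\medskip

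Next I would dispatch the orbits $\CX$ with $\CX\cap\bBS_0\ne\emptyset$: pick $\bs\in\CX\cap\bBS_0$ and take $m(\bs)=n_\bs$; Proposition~\ref{prop:power-s} gives $(\sigma^*)^{|\CX|}(n_\bs)=n_\bs$ on the nose, so the reduction above applies. I would also dispose at once of the case in which $G^*=G(\breve F)^{\sigma^*}$ is quasi-split over $F$: then $\nua=1$ in the construction of \S\ref{Conssigma*}, so $\sigma^*=\sigma$, and the affine pinning underlying $\breve\CT$ was chosen in \S\ref{FMIF} (via Proposition~\ref{sigmarepsWaf}) precisely so that the whole set $\{n_\bs\mid\bs\in\bBS\}$ is $\sigma$-stable; taking $m(\bs)=n_\bs$ for all $\bs\in\bBS$ then settles the corollary outright. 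So the only thing left is to show that, when $G^*$ is not quasi-split, no $\sigma^*$-orbit of $\bBS$ lies entirely in $\bBS\setminus\bBS_0$.

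\medskip

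For this I would argue as follows. We may assume $G_{F,\ad}$ is $F$-simple, so $G_{\bF,\ad}\cong(G'_{\bF,\ad})^{k}$ with $\sigma$ cycling the factors, and $\bBS\setminus\bBS_0=\{\bs_0^{(1)},\dots,\bs_0^{(k)}\}$ is the set of affine nodes of the $k$ factors. Starting from $\bs_0^{(1)}$ and applying $\sigma^*=\Ad(g_\bnu)\circ\sigma$, one gets $\sigma^*(\bs_0^{(j)})=\bs_0^{(j+1)}$ for $j<k$ (here $\sigma$ carries factor $j$ to factor $j+1$ and $\Ad(g_\bnu)$ is trivial off the first factor), while $\sigma^*(\bs_0^{(k)})=\Ad(g_\bnu)(\bs_0^{(1)})=\nua(\bs_0^{(1)})$. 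Since $G^*$ is not quasi-split, $\nua\ne1$, hence (as $\Omega_{\bal}\cap\bW_0=1$) its translation parameter $\bea$ is nonzero, so $\nua$ moves the hyperspecial vertex $\bv_0$ of the first factor; as $\ba_0^{(1)}$ is the unique affine simple root of that factor whose wall misses $\bv_0$, it follows that $\nua$ does not fix $\ba_0^{(1)}$, i.e.\ $\nua(\bs_0^{(1)})$ is a finite simple reflection and so lies in $\bBS_0$. Thus the $\sigma^*$-orbit of $\bs_0^{(1)}$ meets $\bBS_0$ after all, a contradiction, which closes the argument. I expect this last step to be the main obstacle: pinning down, uniformly over all non-quasi-split inner forms, that the Frobenius $\sigma^*$ chosen in \S\ref{Conssigma*} genuinely moves every affine node. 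This is a combinatorial point about the action of the chosen $\nua\in\Omega_{\bal,\ad}$ on $\bBS$, resting on the explicit description of $\Omega_{\bal,\ad}$ and its generators in Table~\ref{tab:tableOmega} (equivalently, on the nonvanishing of the translation parameter $\bea$, which is what makes $\nua$ move $\bv_0$).
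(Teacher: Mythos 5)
Your proof follows the same strategy as the paper's: reduce to $G_{F,\ad}$ $F$-simple, handle $\sigma^*=\sigma$ (the quasi-split case) via Proposition \ref{sigmarepsWaf}, and for $\sigma^*\neq\sigma$ pick a representative $\bs_\CX\in\CX\cap\bBS_0$ of each orbit $\CX$ and invoke Proposition \ref{prop:power-s}. The one place you go beyond the paper is in actually proving the paper's unsupported assertion that when $\sigma^*\neq\sigma$ every $\sigma^*$-orbit meets $\bBS_0$, and your argument for it is correct: since $\Ad(g_\bnu)$ is supported on the first factor, the $\sigma^*$-orbit of $\bs_0^{(1)}$ runs through $\bs_0^{(2)},\dots,\bs_0^{(k)}$ and then lands on $\nua(\bs_0^{(1)})$; as $\nua\neq1$ and $\Omega_{\bal,\ad}\cap\bW_{0,\ad}=1$, the translation part $\bea$ is nonzero, so $\nua$ moves $\bv_0$, does not stabilize $\breve\Delta_0^{(1)}$, and hence sends the unique affine node $\ba_0^{(1)}$ into $\breve\Delta_0^{(1)}$.
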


\begin{proof}
It suffices to consider the case where $G_{F, \ad}$ is $F$-simple. In this case, $\s^*$ acts transitively on the set of connected components of the affine Dynkin diagram of $G_F$. The case $\s^*=\s$ is already proved. Now we assume that $\s^* \neq \s$. Then each $\s^*$-orbit on $\bBS$ contains a simple reflection in $\bBS_0$. For each $\s^*$-orbit $\CX$, we fix a representative $\bs_\CX$ such that $\bs_\CX \in \CX \cap \bBS_0$. Then any element $\bs \in \bBS$ is of the form $\bs=(\s^*)^l(\bs_\CX)$ for a unique $\s^*$-orbit $\CX$ and a unique $l$ with $0 \le l<|\CX|$. We then set $m(\bs)=(\s^*)^l n_{\bs_{\CX}} \in \breve \CT$. By Proposition \ref{prop:power-s}, $\{m(\bs)\;|\; {\bs \in \bBS}\}$ is $\s^*$-stable. 
\end{proof}

\subsubsection{Reduction step}
We first explain how to reduce ourselves to the case when $G_{\bF, \ad}$ is $\bF$-simple. We keep notations as in \S\ref{Conssigma*}. Note that
\[\bW_{\af} =\bW_{\af}^{(1)} \times \bW_{\af}^{(2)}  \cdots \times \bW_{\af}^{(k)},\]
with $\bW_{\af}^{(1)}  \cong \bW_{\af}^{(2)}  \cong \cdots \cong \bW_{\af}^{(k)} $ and $\sigma$ permutes these factors transitively. Write $\bBS = \bBS^{(1)} \times \cdots \times \bBS^{(k)}$. The element $\bs = (\bs^{(1)}, \bs^{(2)}\cdots, \bs^{(k)})$ and $n_{\bs} = n_{\bs^{(1)}}\cdot n_{\bs^{(2)}} \cdots \cdot n_{\bs^{(k)}}$. Note that $\sigma^k$ stabilizes the set $\bBS^{(1)}$ and that $(\sigma^*)^k$ acts as $\Ad(\nua) \circ \sigma^k$ on $\bBS^{(1)}$. Let $\CX^{(1)}$ be the $(\Ad(\nua) \circ \sigma^k)$-orbit of $\bs^{(1)}$ in $\bBS^{(1)}$. Note that $|\CX| = k|\CX^{(1)}|$.  Thus $(\sigma^*)^{|\CX|}(n_{\bs}) = n_{\bs}$ if and only if $(\Ad(g_\bnu) \circ \sigma^k)^{|\CX^{(1)}|}(n_{\bs^{(1)}}) = n_{\bs^{(1)}}$. In particular, we may reduce ourselves to the case when $k=1$, i.e., the case when $G_{\bF, \ad}$ is $\bF$-simple. In this case, $\Omega_{\bal, \ad}$ is as in Table \ref{tab:tableOmega} and we drop all the superscripts in the rest of the argument. 

Next we show that it suffices to prove the equality \eqref{SCX} below. This equality only involves the elements from the finite Tits group. 

If $\nua=1$, then $\s^*=\s$ and (a) follows from the fact that the set $\{n_{\bs}\;|\; \bs \in \bBS\}$ is $\s$-stable. 

Now we assume that $\nua \neq 1$. Recall that $\nua =t_\bea \bz\in \Omega_{\bal,\ad}$. We have $(\nua \circ \s)^{|\CX|}=t_{\breve \xi_{\ad}} (\bz \circ \s)^{|\CX|} \in \bW_{\ad} \rtimes \<\s\>$ for some $\breve \xi_{\ad}\in X_*(T_{\ad})$. Since $(\s^*)^{|\CX|}(\bs)=\bs$, we have $(\Ad(\bz) \circ \s)^{|\CX|}(\bs)=\bs$ and $\bs(\breve \xi_{\ad})=\breve \xi_{\ad}$.
Recall in \S\ref{Conssigma*}, we have $k \breve \eta_{\ad}=j(\breve \eta)$ for some $\breve \eta \in X_*(T)$. Since $\breve \xi_{\ad}$ is an integral linear combination of the $\bW_0$-orbit of $\breve \eta_{\ad}$, we have $k \breve \xi_{\ad}=j(\breve \xi)$ for some $\breve \xi \in X_*(T)$. By \eqref{lem1:trans} and Lemma \ref{lem2:trans}, we have $$(\s^*)^{|\CX|}(n_\bs)=\Ad(\breve \xi(\varpi_F^{1/k})) (\Ad(g_{\bz}) \circ \s)^{|\CX|}(n_\bs).$$
By the proof of Lemma \ref{lem2:trans}, we have $\Ad(\breve \xi(\varpi_F^{1/k})(n_\bs)=n_\bs$. Thus it remains to prove that
\begin{align}\label{SCX}
    (\Ad(g_{\bz}) \circ \s)^{|\CX|}(n_\bs)=n_\bs.
\end{align}

\subsubsection{The case where $\s^*(\bs)=\bs$}\label{orbit1}
If $G_{\bF, \ad}$ is of type $D_{2n}$, $g_\bz = n_\bz$. Otherwise $g_\bz = n_{\bz_0}^i$ for a suitable $0 \le i <|(\Omega_{\bal, \ad})_{\sigma}|$. For type $D_{2n}$ and for the other types with $i=1$, the statement follows from Coxeter relations. In more detail, since $\sigma^*(\bs) = \bs$  we know that $\sigma^*(\bb) = \pm \bb$.  Since $\sigma^*$ preserves $\breve\Delta$, we see that $\sigma^*(\bb) = \bb$. But $\sigma^*(\bb) = \bz(\sigma(\bb))$. Now, since $\sigma(\bb) \in \breve\Delta_0$ and $\bz(\sigma(\bb)) \in \breve\Delta_0$, we see that  $l(\bz s_{\sigma(\bb)}) = l(\bz)+1 = l(s_\bb \bz)$. By Condition (2)(b)$^\dagger$ of \S \ref{sec:tits}, $g_{\bz}n_{s_{\sigma(\bb)}} = n_{\bz s_{\sigma(\bb)}} =n_{s_\bb \bz}= n_{s_\bb}g_{\bz}$. 

If $G_{\bF, \ad}$ is not of type $D_{2n}$ and $i>1$, then the action of $\s$ on $\Omega_{\bal, \ad}$ is trivial and $G_{\bF, \ad}$ is of type $A_n$, $D_{2n+1}$ or $E_6$. We have $\Ad(\bz_0^i)(\bs) = \bs$ for $1<i <|\Omega_{\bal, \ad}|$ and we need to prove that $n_{\bz_0}^i n_\bs n_{\bz_0}^{-i} = n_\bs$. 

If $G_{\bF, \ad}$ is of type $A_n$, then since $\bz_0^i$ does not have any fixed points on $\bBS$, there is no such $\bs$ and the statement is trivial. 

If $G_{\bF, \ad}$ is of type $E_6$, then $|\Omega_{\bal, \ad}|=3$. If $i>1$, then $i=2$ and $\bz_0=(\bz_0^2)^2$. If $\bz_0^2$ fixes $\bs$, then necessarily $\bz_0$ fixes $\bs$. By Condition (2)(b)$^\dagger$ of \S \ref{sec:tits}, $n_{\bz_0} n_{\bs} = n_{\bs} n_{\bz_0}$. Hence $g_\bz n_\bs g_{\bz}\i = n_{\bz_0}^2 n_{\bs} n_{\bz_0}^{-2} = n_{\bs}$. 

If $G_{\bF, \ad}$ is of type $D_{2n+1}$, then $|\Omega_{\bal, \ad}|=4$. The element $\bz_0^3$ has no fixed elements in $\bBS$. For $i=2$, since $\bz_0^2$ fixes $\bs$, we have $\bs=\bs_k$ with $1<k<2n$. In this case, $\bz_0(\bs)=\bs_{2n+1-k}$. By Condition (2)(b)$^\dagger$ of \S \ref{sec:tits}, we have $n_{\bz_0} n_{\bs_k} n_{\bz_0}^{-1} = n_{\bs_{2n+1-k}}$ and $n_{\bz_0} n_{\bs_{2n+1-k}} n_{\bz_0}^{-1} = n_{\bs_k}$. So $n_{\bz_0}^2 n_{\bs} n_{\bz_0}^{-2} = n_{\bs}$.

\subsubsection{The remaining cases}
In this subsection, we assume that $\s^* \neq \s$. So in particular, we have $(\Omega_{\bal, \ad})_\s \neq 1$. 

We first discuss the case where the $\s$-action on $\Omega_{\bal, \ad}$ is trivial. 

If $G_{\bF, \ad}$ is of type $A_n$, then $g_\bz = n_{\bz_0}^i$ for some $i$ with $1 \le i<|(\Omega_{\bal, \ad})|$. We have $(\Ad({\bz_0}^{i|\CX|})(\bs)=\bs$. Since $\bz_0$ acts transitively on the gradients of elements of $\breve\Delta$, we see that $|(\Omega_{\bal, \ad})|$ divides $i |\CX|$. By Corollary \ref{cor:FTG}(2), we have $(\Ad(g_{\bz}) \circ \s)^{|\CX|}(n_\bs)=(\Ad(g_{\bz}))^{|\CX|}(n_\bs)=(\Ad(n_{\bz_0}^{i|\CX|})(n_\bs)=n_\bs$. 

If $G_{\bF, \ad}$ is not of type $A$ and the $\s$-action on $\Omega_{\bal, \ad}$ is trivial, then each $\s^*$-orbit on $\bBS$ is of size $1$ or the order $l$ of $\bz$ in $\bW_0$. The case where $\s^*(\bs)=\bs$ is handled in \S\ref{orbit1}. If $\s^*(\bs)\neq \bs$, then by Corollary \ref{cor:FTG}(1), $$(\s^*)^l(n_\bs)=\Ad(g_{\bz})^l \s^l(n_\bs)=\Ad(g_{\bz})^l(n_\bs)=n_\bs.$$

Next we discuss the case where the $\s$-action on $\Omega_{\bal, \ad}$ is non-trivial. Since $(\Omega_{\bal, \ad})_\s \neq 1$, $G_{\bF, \ad}$ is of type $A_{2n+1}$ or type $D_n$.

If $G_{\bF, \ad}$ is of type $A_{2n+1}$, then $g_{\bz}=n_{\by_{(1)}}$ and $\s(g_{\bz})=n_{\by_{(2n+1)}}$. We have $g_{\bz} \s(g_{\bz})=(\ba^\vee_1+\ba^\vee_3+\cdots+\ba^\vee_{2n+1})(-1) \in Z(\breve F)$. Note that any $\s^*$-orbit on $\bBS$ is of size $1$ or $2$. The case where $\s^*(\bs)=\bs$ is handled in \S\ref{orbit1}. If $\s^*(\bs)\neq \bs$, then $$(\s^*)^2(n_\bs)=\Ad(g_{\bz} \s(g_{\bz})) \s^2(n_\bs)=\Ad((\ba^\vee_1+\ba^\vee_3+\cdots+\ba^\vee_{2n+1})(-1)) n_\bs=n_\bs.$$

If $G_{\bF, \ad}$ is of type $D_n$ with $n$ odd, then $g_{\bz}=n_{\by_{(n)}}$ and $\s(g_{\bz})=n_{\by_{(n-1)}}$. By Corollary \ref{cor:FTG}(3), $g_{\bz} \s(g_{\bz})=1$ or $(\ba^\vee_{n-1}+\ba^\vee_n)(-1)$. In either case, $g_{\bz} \s(g_{\bz}) \in Z(\breve F)$. We then follow the same argument as the type $A_{2n+1}$ case above. 

If $G_{\bF, \ad}$ is of type $D_n$ with $n$ even and $g_{\bz}=n_{\by_{(1)}}$, then  $\s(g_{\bz})=n_{\by_{(1)}}$. By Corollary \ref{cor:FTG}(4), $g_{\bz} \s(g_{\bz})=(\ba^\vee_{n-1}+\ba^\vee_n)(-1) \in Z(\breve F)$. We then follow the same argument as the type $A_{2n+1}$ case above. 

If $G_{\bF, \ad}$ is of type $D_n$ with $n$ even and $g_{\bz}=n_{\by_{(n-1)}}$ or $n_{\by_{(n)}}$, then by Corollary \ref{cor:FTG}(4), $g_{\bz} \s(g_{\bz})=z n_{\by_{(1)}}$ for some $z \in Z(\breve F)$. Note that each $\s^*$-orbit on $\bBS$ is of size $1$ or $4$. We have \begin{align*}(\Ad(g_\bz) \circ \s)^4 &=\Ad(g_{\bz} \s(g_{\bz})) \Ad(\s^2(g_{\bz} \s(g_{\bz})) \circ \s^4 \\ &=\Ad(z n_{\by_{(1)}} \s^2(z n_{\by_{(1)}})) \circ \s^4 \\ &=\Ad(z \s^2(z) n_{\by_{(1)}}^2) \circ \s^4=\s^4.\end{align*}  Thus $$(\s^*)^4(n_\bs)=(\Ad(g_\bz) \circ \s)^4(n_\bs)=\s^4(n_\bs)=n_\bs.$$

This finishes the verification of \eqref{SCX} in all the remaining cases and thus finishes the proof of Proposition \ref{prop:power-s}. 

\subsection{The $\s^*$-fixed liftings of $\Omega_{\bal}^{\s^*}$}\label{constaubs}
 
Let $\breve \t=t_\blambda \by \in \Omega_{\bal}^{\s^*}$. We will set $m(\btau) = n_\blambda m(\by)$ for a suitable $m(\by) \in \breve\CT$.  
 
Let us first choose $m(\by)$ when $G_{\bF,\ad}$  is $\bF$-simple. If $G_{\bF, \ad}$ is of type $D_n$ with $n$ even, then we set $m(\by)=n_{\by}$. Otherwise, $\Omega_{\bal, \ad}$ is a cyclic group and $\by=\bzg^j$ for $0 \le j<|\Omega_{\bal, \ad}|$. Set $m(\by)=n_{\bzg}^j$ and $m(\breve \t)=n_\blambda  m(\by)$. 
 
If $G_{\bF, \ad}$ is not $\bF$-simple, then 
 \begin{align}\label{WGS}
     W(G, T)=W(G, T)^{(1)} \times \cdots \times W(G, T)\ik,
 \end{align} where $W(G, T)^{(1)} \cong \cdots \cong W(G, T)\ik$ are irreducible finite Weyl groups and the action of $\s$ permutes transitively the irreducible factors $W(G, T)^{(1)}, \ldots, W(G, T)\ik$. There exist $\by^{(1)} \in W(G, T)^{(1)}$ such that $\by=\by^{(1)} \s(\by^{(1)}) \cdots \s^{k-1}(\by^{(1)})$. Define $m(\by^{(1)})$ as in the preceding paragraph. More precisely, if $G_{\bF, \ad}^{(1)}$ is of type $D_n$ with $n$ even, then we set $m(\by^{(1)})=n_{\by^{(1)}}$. Otherwise, $\Omega_{\bal, \ad}^{(1)}$ is a cyclic group and $\by^{(1)}=\bzg^j$ for $0 \le j<|\Omega_{\bal, \ad}^{(1)}|$. Set $m(\by^{(1)})=n_{\bzg}^j$. Let \[m(\by) = m(\by^{(1)}) \sigma(m(\by^{(1)})) \cdots \sigma^{k-1}(m(\by^{(1)})).\]
 
The main result of this subsection is the following.

\begin{proposition}\label{prop:m-t}
Let $\breve \t \in \Omega_{\bal}^{\s^*}$. Then $\s^*(m(\breve \t))=m(\breve \t)$. 
\end{proposition}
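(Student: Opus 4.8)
\emph{Strategy.} Following the reduction at the start of the proof of Proposition~\ref{prop:power-s}, I would first pass to the case where $G_{\bF,\ad}$ is $\bF$-simple: writing $G_{\ad}=\Res_{L_k/F}G'_{\ad}$ with $G'_{\bF,\ad}$ being $\bF$-simple, the element $\breve\t$, its rotation part $\by\in\bW_0$, and the lifting $m(\breve\t)=n_\blambda m(\by)$ all factor as $k$-fold products over the $\s$-translates of a single factor, and $\s^*$-fixedness of $m(\breve\t)$ becomes $(\Ad(g_\bnu)\circ\s^k)$-fixedness of the first-factor piece. So I assume $G_{\bF,\ad}$ is $\bF$-simple and keep the notation of \S\ref{Conssigma*} and \S\ref{constaubs}.

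The second step is an algebraic reduction that eliminates the translation part. Write $\breve\t=t_\blambda\by$. Comparing $\s^*(t_\blambda\by)=t_{\s^*(\blambda)+\bea-\by'(\bea)}\,\by'$ (using the description of $\s^*$ on $\bW$ from \S\ref{identitiestp}, with $\by'=\Ad(\bz)(\s(\by))$, and $\s^*(n_\blambda)=n_{\s^*(\blambda)}$ from \eqref{lem1:trans}) with $t_\blambda\by$ forces $\by'=\by$ and $\blambda=\s^*(\blambda)+\bea-\by(\bea)$ in $X_*(T)$. Then using Lemma~\ref{lem2:trans} with $\by'=\by$, together with the identity $\mu(\varpi_F)\nu(\varpi_F)=(\mu+\nu)(\varpi_F)$ for cocharacters of the torus $T$, I get
\[
\s^*(m(\breve\t))=n_{\s^*(\blambda)}\,n_{\bea-\by(\bea)}\,\Ad(g_\bz)(\s(m(\by)))=n_\blambda\,\Ad(g_\bz)(\s(m(\by))).
\]
Since $m(\breve\t)=n_\blambda m(\by)$, the proposition is therefore equivalent to the identity $\Ad(g_\bz)(\s(m(\by)))=m(\by)$ in $\breve\CT$, which no longer involves $\blambda$.

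To prove this identity, recall that the affine pinning was chosen $\s$-stable (Proposition~\ref{sigmarepsWaf}), so the finite Tits cross-section is $\s$-equivariant: $\s(n_w)=n_{\s(w)}$ for $w\in\bW_0$, and hence $\s(m(\by))=n_{\s(\bzg)}^{j}$ when $m(\by)=n_{\bzg}^{j}$, resp. $\s(m(\by))=n_{\s(\by)}$ in the type $D_{2n}$ case. If $\by=1$ there is nothing to prove, so I assume $\by\neq 1$, i.e. $\breve\t$ has nontrivial image in $\Omega_{\bal,\ad}$. If $\s$ acts trivially on $\Omega_{\bal,\ad}$, then $\bzg$ (resp. $\by$) is $\s$-fixed and the identity is immediate: in the cyclic case $g_\bz$ and $m(\by)$ are both powers of $n_{\bzg}$ and commute, while for type $D_{2n}$ it follows from the commutativity $n_{\by_{(i)}}n_{\by_{(j)}}=n_{\by_{(j)}}n_{\by_{(i)}}$ of Corollary~\ref{cor:FTG}(4). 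If $\s$ acts nontrivially but $(\Omega_{\bal,\ad})_\s$ is trivial, then $\nua=1$, $\s^*=\s$, $\by$ is $\s$-fixed and the identity is again immediate; and if $(\Omega_{\bal,\ad})_\s\cong\BZ/2\BZ$ with $G_{\bF,\ad}$ of type $D_n$, $n$ even, one reduces via $\Omega_{\bal,\ad}^{\s}=\{1,\by_{(1)}\}$ to Corollary~\ref{cor:FTG}(4) once more. The only remaining cases are $G_{\bF,\ad}$ of type $A_{2n+1}$ or $D_n$ with $n$ odd, $\s$ acting nontrivially on $\Omega_{\bal,\ad}$ and $\nua=\nuag$: here $g_\bz=n_{\bzg}$, $\by$ is the unique element of order $2$ in $\Omega_{\bal,\ad}^{\s^*}$, and one computes $\Ad(g_\bz)(\s(m(\by)))$ explicitly using Corollary~\ref{cor:FTG}(2), resp. (3), together with the relation $g_\bz\s(g_\bz)\in Z(\breve F)$ already established in the proof of Proposition~\ref{prop:power-s}. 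The outcome is $c\cdot m(\by)$ for an explicit element $c\in\breve S_2$ of the form $(\text{a half-sum of coroots})(-1)$.

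The main obstacle is precisely this last point: one must show $c=1$ in $G$. The element $c$ is in general nontrivial in the simply connected cover, so this is genuinely sensitive to the isogeny type of $G$ and is where the case-by-case analysis is unavoidable. The resolution is that the mere existence of $\breve\t\in\Omega_{\bal}^{\s^*}$ with nontrivial image in $\Omega_{\bal,\ad}$ forces the associated minuscule fundamental coweight $\o^\vee$ into $X_*(T)$ modulo $X_*(Z^0)+\BZ\breve\Phi^\vee$; a direct check (for type $A_{2n+1}$, comparing $\o^\vee_{n+1}$ with $\tfrac12(\ba^\vee_1+\ba^\vee_3+\cdots+\ba^\vee_{2n+1})$, and an analogous comparison for $D_n$) then shows that $\o^\vee\in X_*(T)$ implies $c=1$ in $G$. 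Combined with the first two steps, this completes the proof.
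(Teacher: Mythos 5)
Your strategy and the paper's proof are essentially the same: reduce to $\bF$-simple, eliminate the translation part via the formula $\s^*(m(\breve\t))=n_\blambda\,\Ad(g_\bz)(\s(m(\by)))$, and then do a type-by-type analysis of the identity on the finite Tits group, closing the last step by showing the obstruction $c\in\breve S_2$ vanishes in $G$ because the half-sum of coroots that produces $c$ lies in $X_*(T)$ whenever $\btau\in\Omega_\bal^{\s^*}$ has nontrivial adjoint image. The paper packages the middle step slightly differently, splitting $\Ad(g_\bz)(\s(m(\by)))=m(\by)$ into the two independent conditions $\s(m(\by))=m(\by)$ and $\Ad(g_\bz)(m(\by))=m(\by)$ (the latter being immediate), and it does not invoke $g_\bz\s(g_\bz)\in Z(\breve F)$ in this proof; your combined computation using $n_{\bz_0}^{2n+2}\in Z$ and $g_\bz\s(g_\bz)\in Z$ arrives at the same sign, so that is a cosmetic difference.

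One slip in your enumeration: in the final hard case you restrict to $\nua=\nuag$, but for type $A_{2n+1}$ and type $D_n$ with $n$ odd and nontrivial $\s$-action on $\Omega_{\bal,\ad}$, the element $\by$ can be the nontrivial $\s$-fixed rotation even when $\nua=1$ (i.e.\ $G^*$ quasi-split), and this case does not fall under any of your earlier ``immediate'' bullets since $m(\by)=n_{\bz_0}^j$ is \emph{not} $n_\by$ and $\s(n_{\bz_0})=n_{\bz_0^{-1}}\neq n_{\bz_0}$. The fix is trivial — with $\nua=1$ one has $g_\bz=1$, so the identity reduces to $\s(m(\by))=m(\by)$, which is exactly the sign computation you already carry out (and is $\nua$-independent in the paper's split formulation). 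It is worth stating this explicitly so that the ``remaining cases'' cover both $\nua\in\{1,\nuag\}$.
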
 

\subsubsection{Reduction step} We begin with a simple lemma.
\begin{lemma}\label{lem:sigmatau}
For each $\btau \in \Omega_{\bal}$, we have $\sigma^*(\btau) = \sigma(\btau)$.
\end{lemma}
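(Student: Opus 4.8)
The plan is to reduce the identity to the single fact that $\Ad(g_\bnu)$ acts trivially on $\Omega_{\bal}$, where by construction (\S\ref{Conssigma*}) $\sigma^* = \Ad(g_\bnu)\circ\sigma$ with $g_\bnu = g_\be g_\bz$, $\nua = t_\bea\bz\in\Omega_{\bal,\ad}$, and $k\bea = j(\be)$ for some $\be\in X_*(T)$. Since $\bal$ is $\sigma$-stable we have $\sigma(\Omega_{\bal}) = \Omega_{\bal}$, and $\sigma^*(\btau) = \Ad(g_\bnu)\big(\sigma(\btau)\big)$ for $\btau\in\Omega_{\bal}$; so once $\Ad(g_\bnu)$ is shown to fix $\Omega_{\bal}$ pointwise, the lemma follows. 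I would begin by recording that $\Ad(g_\bnu)$ really does induce an automorphism of $\bW$: conjugation by $g_\be$ preserves $G(\bF)$ (noted in \S\ref{Conssigma*}, as $\langle\ba,\be\rangle/k\in\BZ$ for every root $\ba$) and $g_\bz\in G(\bF)$; moreover $g_\be$ centralizes $T = Z_G(S)$ and $g_\bz\in N_G(S)(\bF)$, so $\Ad(g_\bnu)$ preserves $N_G(S)(\bF)$ and the unique parahoric $T(\bF)_1$, hence descends to $\bW$.

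Next I would project through the adjoint quotient $j\colon\bW\to\bW_{\ad}$. A direct computation gives $j(g_\be) = (j\be)(\varpi_F^{1/k}) = (k\bea)(\varpi_F^{1/k}) = \bea(\varpi_F) = n_\bea\in T_{\ad}(\bF)$, which represents the translation $t_\bea$ in $\bW_{\ad}$, while $g_\bz$ represents $\bz$; hence $j(g_\bnu)$ represents $\nua = t_\bea\bz$ in $\bW_{\ad}$, and $\Ad(g_\bnu)$ acts on $\bW_{\ad}$ as conjugation by $\nua$. As $\Omega_{\bal,\ad}$ is abelian and $\nua\in\Omega_{\bal,\ad}$, this conjugation is trivial on $\Omega_{\bal,\ad}$; since $j(\Omega_{\bal})\subseteq\Omega_{\bal,\ad}$, for each $\btau\in\Omega_{\bal}$ the element $\mu_\btau := \btau^{-1}\Ad(g_\bnu)(\btau)$ lies in $\ker(j|_{\Omega_{\bal}}) = X_*(Z^0)$.

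It then remains to see that $\mu_\btau = 1$. From \eqref{lem1:trans} (together with $\sigma^* = \Ad(g_\bnu)\circ\sigma$) one reads off $\Ad(g_\bnu)(n_\blambda) = n_{\bz(\blambda)}$ for $\blambda\in X_*(T)$; since $\bz$ fixes the central cocharacters, $\Ad(g_\bnu)$ fixes $X_*(Z^0)\subset\bW$ pointwise, and $X_*(Z^0)$ is central in $\bW$ (being the image of the connected center $Z^0\subseteq G$) and torsion-free. Consequently $\btau\mapsto\mu_\btau$ is a homomorphism $\Omega_{\bal}\to X_*(Z^0)$ (here one uses that $\mu_\btau$ is central), which kills $X_*(Z^0) = \ker j$ and therefore factors through the finite group $\Omega_{\bal}/X_*(Z^0)\hookrightarrow\Omega_{\bal,\ad}$; a homomorphism from a finite group to a torsion-free group is trivial, so $\mu_\btau = 1$. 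Hence $\Ad(g_\bnu)$ fixes $\Omega_{\bal}$ pointwise and $\sigma^*(\btau) = \sigma(\btau)$. The main thing to be careful about is the bookkeeping — that $\Ad(g_\bnu)$ is well defined on $\bW$ even though $g_\bnu\notin G(\bF)$, and the concluding torsion argument — while the non-$\bF$-simple case needs no change, since there too $\nua$ lies in the abelian group $\Omega_{\bal,\ad}$ and \eqref{lem1:trans} holds verbatim.
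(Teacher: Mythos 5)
Your proof is correct and proceeds along a genuinely different route from the paper's. The paper works directly with the decomposition $\btau = t_\blambda \by$: it uses $\sigma^*(\blambda)=\bz(\sigma(\blambda))$ and $\sigma^*(\by)=t_{\bea-\by'(\bea)}\by'$ to express the defect between $\sigma^*(\btau)$ and $\sigma(\btau)$ as a translation $t_{\bmu}$ with $\bmu\in X_*(Z^0)$ (abelian-ness of $\Omega_{\bal,\ad}$ is used to get $\by'=\sigma(\by)$ and to place $\bmu$ in $X_*(Z^0)$), and then proves $\bmu=0$ by a telescoping sum over the powers $\bz^i$, $0\leq i<l$, where $l$ is the order of $\bz$, which yields $l\bmu=0$ and hence $\bmu=0$ by torsion-freeness. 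Your argument avoids this coordinate computation: you observe that $\Ad(g_\bnu)$ descends to an automorphism of $\bW$ and acts on $\bW_{\ad}$ as conjugation by $\nua$, hence is trivial on the abelian group $\Omega_{\bal,\ad}$; the resulting defect $\mu_\btau=\btau^{-1}\Ad(g_\bnu)(\btau)$ lands in the central, torsion-free subgroup $X_*(Z^0)=\ker(j)$, is a genuine homomorphism on $\Omega_\bal$ (by centrality of its values) that kills $X_*(Z^0)$, hence factors through the finite quotient $\Omega_\bal/X_*(Z^0)$ and therefore vanishes. Both arguments rest on the same two ingredients — abelian-ness of $\Omega_{\bal,\ad}$ and torsion-freeness of $X_*(Z^0)$ — but yours packages the torsion step structurally (a finite group has no nontrivial homomorphism to a torsion-free group) rather than via an explicit summation, and bypasses the $t_\blambda\by$ bookkeeping. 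The price you pay, which you correctly flag, is having to check that $\Ad(g_\bnu)$ really descends to $\bW$ and preserves $\Omega_\bal$, and that $j(g_\bnu)$ represents $\nua$; these are verified by the same observations the paper uses in \S\ref{Conssigma*} and \S\ref{identitiestp}.
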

\begin{proof}
Let $\btau=t_{\blambda} \by$. Then $\sigma^*(\btau) = t_{\sigma^*(\blambda)}t_{\bea - y'(\bea)}\by'$, where $\by'=Ad(\bz)(\sigma(\by))$. Since $\Omega_{\bal, \ad}$ is abelian, $\by' = \sigma(\by)$ and $\sigma^*(\blambda) -\sigma(\blambda) = \bz(\sigma(\blambda))-\sigma(\blambda)=\bea - y'(\bea)+\bmu$ for a suitable $\bmu \in X_*(Z^0)$. In particular, we have $\bz$ commutes with $\by'=\s(\by)$. 

Let $l$ be the order of $\bz$. Then $\bz^{i+1}(\sigma(\blambda)) -\bz^{i}(\sigma(\blambda)) = \bz^{i}(\bea) - y' \bz^{i}(\bea)+ \bmu$ for any $i$. Since $\nua^l=1$, we have $\bea+ \bz(\bea) + \cdots+\bz^{l-1}(\bea) =0$. Thus $$l \bmu = \sum_{i=0}^{l-1} \bigl(\bz^{i}(\bea) - y' \bz^{i}(\bea)+ \bmu \bigr)=\sum_{i=0}^{l-1} \bigl(\bz^{i+1}(\sigma(\blambda)) -\bz^{i}(\sigma(\blambda))\bigr)=0$$ and hence $\bmu=0$. So $\sigma^*(\btau) = t_{\sigma(\blambda)} \sigma(\by) = \sigma(\btau)$.  
\end{proof}

Let $\breve \t=t_\blambda\by \in \Omega_{\bal}^{\s^*}=\Omega_{\bal}^{\s}$. Then we have $\s(\by)=\by$ and  $\blambda=\s^*(\blambda)+\bea-\Ad(\bz)(\sigma(\by))(\bea)$. 

By \eqref{lem1:trans} and Lemma \ref{lem2:trans}, \begin{align*} \s^*(m(\breve \t)) &=\s^*(n_\blambda m(\by))=\s^*(n_{\blambda}) \s^*(m(\by)) \\ &=n_{\s^*(\blambda)+\bea-\Ad(\bz)(\sigma(\by))\bea} \Ad(g_{\bz}) \s(m(\by)) \\ &=n_{\blambda} \Ad(g_{\bz}) \s(m(\by)). \end{align*}

To verify $\s^*(m(\breve \t))=m(\breve \t)$, it remains to show \begin{enumerate}
    \item $\s(m(\by))=m(\by)$; 
    \item $\Ad(g_{\bz})(m(\by)) = m(\by)$. 
\end{enumerate}

Now we show that it suffices to check the case where $G_{\bF, \ad}$ is $\bF$-simple. 

\begin{lemma} We have \begin{enumerate}
    \item $\s(m(\by))=m(\by)$  if and only if $\s^k(m(\by^{(1)})) = m(\by^{(1)})$. 
    \item $\Ad(g_{\bz})(m(\by)) = m(\by)$ if and only if $\Ad(g_{\bz})(m(\by^{(1)})) = m(\by^{(1)})$.
\end{enumerate}

\end{lemma}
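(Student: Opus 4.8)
The plan is to exploit that the decomposition \eqref{WGS} is one into \emph{commuting} simple factors, so that the elements $\s^i(m(\by^{(1)}))$ attached to distinct factors commute with one another, and that both $\s^k$ and $\Ad(g_{\bz})$ respect this block structure; the two equivalences then reduce to a factor-by-factor comparison of group elements.

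First I would record two structural observations. Since $W(G,T)$ decomposes as the direct product \eqref{WGS} with $\s$ permuting the $k$ factors cyclically (a transitive cyclic group of permutations of $k$ points is generated by a $k$-cycle), the elements $m(\by^{(1)}), \s(m(\by^{(1)})), \dots, \s^{k-1}(m(\by^{(1)}))$ are supported on pairwise distinct simple factors: $m(\by^{(1)})$ is a product of root-group elements $x_{\pm\ba}(\cdot)$ for $\ba$ ranging over the irreducible component of $\breve\Phi(G,T)$ attached to $G_{\bF,\ad}^{(1)}$, and $\s^i$ moves this component to the $i$-th one; since root subgroups attached to orthogonal roots in different components commute, the $\s^i(m(\by^{(1)}))$ pairwise commute. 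Secondly, $\s^k$ stabilizes the first factor, and by its construction in \S\ref{constaubs} the element $g_{\bz}$ --- equal to $n_{\bz}$ or to a power of $n_{\bzg}$ with $\bz \in \Omega_{\bal,\ad}^{(1)}$ --- is supported on the first factor only, so $\Ad(g_{\bz})$ fixes $\s^i(m(\by^{(1)}))$ for $1\le i\le k-1$ and acts only on the component $m(\by^{(1)})$.

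With these in hand the computation will be short. Writing $m(\by) = \prod_{i=0}^{k-1}\s^i(m(\by^{(1)}))$ and using that the factors commute,
\[\s(m(\by)) = \prod_{i=1}^{k}\s^i(m(\by^{(1)})) = \s^k(m(\by^{(1)}))\cdot\prod_{i=1}^{k-1}\s^i(m(\by^{(1)})),\]
where $\s^k(m(\by^{(1)}))$ has again been pulled into the first factor past the commuting middle terms. Comparing this with $m(\by) = m(\by^{(1)})\cdot\prod_{i=1}^{k-1}\s^i(m(\by^{(1)}))$ component by component, the parts in the factors $2,\dots,k$ agree automatically, so $\s(m(\by)) = m(\by)$ if and only if $\s^k(m(\by^{(1)})) = m(\by^{(1)})$, which is (1). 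Likewise $\Ad(g_{\bz})(m(\by)) = \Ad(g_{\bz})(m(\by^{(1)}))\cdot\prod_{i=1}^{k-1}\s^i(m(\by^{(1)}))$, and this equals $m(\by)$ if and only if $\Ad(g_{\bz})(m(\by^{(1)})) = m(\by^{(1)})$, which is (2).

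The one point I would be careful about --- the main (minor) obstacle --- is justifying the commutation of the $\s^i(m(\by^{(1)}))$: since $G$ itself need not be a direct product even when $G_{\bF,\ad}$ is not $\bF$-simple, one cannot read this off directly from \eqref{WGS} and must instead argue at the level of root subgroups as above, using $G_{\ad} = \Res_{L_k/F}G'_{\ad}$ to see that $\breve\Phi(G,T)$ is a union of $k$ mutually orthogonal irreducible subsystems permuted transitively by $\s$, and that $m(\by^{(1)})$ involves only roots from the first of these. Once that is set up, everything else is the bookkeeping indicated above.
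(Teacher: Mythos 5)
Your argument is correct and follows the same strategy as the paper: write $m(\by)$ as the product of its blocks on the $k$ simple factors, observe that $\s$ cyclically shifts these blocks while $g_{\bz}$ lives only in the first one, and compare block by block. The only nuance is in how the commutation of the blocks is justified --- the paper cites the decomposition $\breve\CT_{\fin} \cong \breve\CT_{\fin}^{(1)} \times \cdots \times \breve\CT_{\fin}^{(k)}$ coming from \eqref{WGS}, while you argue directly at the level of root subgroups of the $k$ mutually orthogonal irreducible subsystems of $\breve\Phi(G,T)$; your caution is reasonable (since $G$ itself need not split as a product, the ``direct product'' is really only asserting that elements supported on distinct factors commute), but it leads to the same conclusion the paper uses.
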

\begin{proof}
Note that $\sigma(\by) =\by$ if and only if $\sigma^k(\by^{(1)}) = \by^{(1)}$. By the definition of $m(\by)$, we have $\sigma(m(\by)) = \sigma(m(\by^{(1)})) \sigma^2(m(\by^{(1)})) \cdots \sigma^{k}(m(\by^{(1)}))$. Further, \eqref{WGS} implies that
\begin{align}\label{Tfin}
    \breve\CT_{\fin} \cong \breve\CT_{\fin}^{(1)} \times \breve\CT_{\fin}^{(2)} \times \cdots \times \breve\CT_{\fin}^{(k)},
\end{align}
where $ \breve\CT_{\fin}^{(i)}$ is the finite Tits group attached to $W(G,T)^{(i)}$. Hence 
\[\sigma(m(\by)) = \sigma^{k}(m(\by^{(1)})) \sigma(m(\by^{(1)})) \sigma^2(m(\by^{(1)})) \cdots \sigma^{k-1}(m(\by^{(1)})).\] 
Now it follows that $\sigma(m(\by) = m(\by)$ if and only if $ \sigma^{k}(m(\by^{(1)}))= m(\by^{(1)})$.

For (2), since $g_\bz \in \breve\CT_{\fin}^{(1)}$, and $\sigma^i(m(\by^{(1)}) \in \breve\CT_{\fin}^{(i)}$ for each $0 \le i \le k-1$, we see using \eqref{Tfin} that $g_\bz$ commutes with $\sigma^i(m(\by^{(i)})$ for all $i \geq 1$. Hence $\Ad(g_{\bz})(m(\by)) = m(\by)$ if and only if $\Ad(g_{\bz})(m(\by^{(1)})) = m(\by^{(1)})$.
\end{proof}

\subsubsection{Proof of Proposition \ref{prop:m-t} for $\breve F$-simple groups}

We assume that $G_{\bF, \ad}$ is $\bF$-simple and we drop the subscripts in the discussion below.  In particular, we may assume $\Omega_{\bal, \ad}$, $\nuag$ are as in Table \ref{tab:tableOmega}. So $g_\bz = n_\bz$ if $\Omega_{\bal, \ad}$ is of type $D_n$ with $n$  even. Otherwise, $g_\bz = n_{\bz_0}^i$ for a suitable $0 \le i <|(\Omega_{\bal, \ad})_{\sigma}|$. Also $m(\by) = n_\by$ if $\Omega_{\bal, ad}$ is of type $D_n$ with $n$ even. Otherwise, $m(\by) = n_{\bz_0}^j$ for a suitable $0 \le j <|(\Omega_{\bal, \ad}|$.

We show that $\Ad(g_\bz)(m(\by)) = m(\by)$. 

When $G_{\bF, \ad}$ is of type $D_n, n$ even, this is a consequence of Corollary \ref{cor:FTG}(4). Otherwise, $g_{\bz}$ and $m(\by)$ are both powers of $n_{\bz_0}$ and the claim is obvious.

We show that $\sigma(m(\by)) = m(\by)$. 

The proof involves a detailed case-by-case analysis. Recall that the representatives $\{n_{s_\ba}\;|\; \ba \in \breve\Delta\}$ satisfies $H(\breve\Delta, \sigma)$. When $G_{\bF, \ad}$ is of type $D_n, n$ even, we have $\s(m(\by))=\s(n_\by)=n_\by=m(\by)$. Next we consider the case where $\Omega_{\bal, \ad}$ is cyclic. If the action of $\sigma$ on $\Omega_{\bal,ad}$ is trivial, then $\sigma(\bz_0) = \bz_0$ and $\sigma(n_{\bz_0}) = n_{\bz_0}$. In this case, $\sigma(m(\by)) = \sigma(n_{\bz_0}^j) = \sigma(n_{\bz_0})^j = n_{\bz_0}^j = m(\by)$. It remains to prove the claim when $\Omega_{\bal, \ad}$ is cyclic and the action of $\sigma$ on $\Omega_{\bal, \ad}$ is non-trivial. 

Recall that $j: G_\bF \rightarrow G_{\bF, \ad}$ is the adjoint quotient. Let $\btau \in \Omega_{\bal}^{\s^*}$. If $j(\btau)=1$, then $\by=1$ and $m(\by)=1$. In this case, $\s(m(\by))=1=m(\by)$. 

Now we assume that $j(\btau) \neq 1$. This happens when $G_{\bF, \ad}$ is of type $A_{2n+1}$ and $j=n+1$ or $G_{\bF, \ad}$ is of type $D_{2n+1}$ and $j=2$. In both these cases $\sigma(\nuag) = \nuag\i$ and $\sigma(\bz_0) = \bz_0^{-1}$. 

If $G_{\bF, \ad}$ is of type $A_{2n+1}$, then $m(\by)=n_{\bz_0}^{n+1}$ and $\bz_0^{n+1}=\by_{(n+1)}$. By Corollary \ref{cor:FTG}(2), we have     
\[n_{\bz_0}^{n+1} = \begin{cases} n_{\bz_0^{n+1}}, & \text{ if } n \text{ is even},\\
    (a_1 ^\vee + a_3^{\vee} + \cdots + a_{n}^\vee)(-1) n_{\bz_0^{n+1}}, & \text{ if } n \text{ is odd}.
    \end{cases} 
\]

We have $\s(\by_{(n+1)})=\by_{(n+1)}$. Thus 
\[\s(n_{\bz_0}^{n+1}) = \begin{cases} n_{\bz_0}^{n+1}, & \text{ if } n \text{ is even},\\
    (a_1 ^\vee + a_3^{\vee} + \cdots + a_{2n+1}^\vee)(-1) n_{\bz_0}^{n+1}, & \text{ if } n \text{ is odd}.
    \end{cases} 
\]

We identify $\Omega_{\bal, \ad}$ with $X_*(T_{\ad})/X_*(T_{sc})$. Under this identification, 
\begin{align*}
    j(\btau) &= (n+1)\nuag=(n+1) \frac{\ba_1^\vee +2\ba_2^\vee +3 \ba_3^\vee +\cdots+(2n+1)\ba_{2n+1}^\vee}{2n+2} \\ &=\frac{\ba_1^\vee  +3 \ba_3^\vee + 5\ba_5^\vee +\cdots+ (2n+1)\ba_{2n+1}^\vee}{2} +  \ba_2^\vee +2 \ba_4^\vee +\cdots+n \ba_{2n}^\vee \\ & \equiv \frac{\ba_1^\vee  +\ba_3^\vee + \ba_5^\vee +\cdots+\ba_{2n+1}^\vee}{2}\mod X_*(T_{sc}).
\end{align*}

Since $\btau \in X_*(T)/X_*(T_{sc})$ and $j$ acts as identity on $X_*(T_{sc})$, $\frac{\ba_1^\vee  +\ba_3^\vee + \ba_5^\vee +\cdots+\ba_{2n+1}^\vee}{2} \in X_*(T)$. Hence 
\begin{align*}
    (\ba_1^\vee +\ba_3^\vee +\cdots+\ba_{2n+1}^\vee)(-1)= \left(\left(\frac{\ba_1^\vee  +\ba_3^\vee + \ba_5^\vee +\cdots+\ba_{2n+1}^\vee}{2}\right)(-1)\right)^2=1 \in G(\breve F).
\end{align*}

Therefore we have $\s(m(\by))=\s(n_{\bz_0}^{n+1})=n_{\bz_0}^{n+1}=m(\by)$. 

If $G_{\bF, \ad}$ is of type $D_{2n+1}$, we have $m(\by) = n_{\bz_0}^2$, and by Corollary \ref{cor:FTG}(3), $\s(m(\by)) = tm(\by)$ where $t = (\ba_{2n}^\vee + \ba_{2n+1}^\vee)(-1)$. We claim that $t=1$ in $G(\bF)$. The argument is similar to type $A_{2n+1}$. Consider the element $j(\btau)\in \Omega_{\bal, \ad} =X_*(T_{\ad})/X_*(T_{sc})$. Then \[\nuag = \frac{\ba_1^\vee +2\ba_2^\vee +\cdots+ (2n-1) \ba_{2n-1}^\vee + \frac{1}{2} (2n-1) \ba_{2n}^\vee +\frac{1}{2}(2n+1)\ba_{2n+1}^\vee}{2} \mod X_*(T_{sc}).\] 
Then $j(\btau) \equiv 2\nuag \equiv \frac{1}{2} \ba_{2n}^\vee +\frac{1}{2}\ba_{2n+1}^\vee \mod X_*(T_{sc})$.
Since $2\btau \in X_*(T_{sc})$ and $j$ acts as identity on $X_*(T_{sc})$, we see that
\begin{align*}
    \btau &\equiv\frac{1}{2} \ba_{2n}^\vee +\frac{1}{2}\ba_{2n+1}^\vee \mod X_*(T_{sc}). 
\end{align*}
Since $\btau \in X_*(T)/X_*(T_{sc})$, we see that $ \frac{1}{2} \ba_{2n}^\vee +\frac{1}{2}\ba_{2n+1}^\vee \in X_*(T)$. Hence 
\begin{align*}
    t &= (\ba_{2n}^\vee + \ba_{2n+1}^\vee)(-1)= \left(\left(\frac{1}{2} \ba_{2n}^\vee +\frac{1}{2}\ba_{2n+1}^\vee\right)(-1)\right)^2=1.
\end{align*}
Hence $\s(m(\by))=m(\by)$. 

This finishes the proof of Proposition \ref{prop:m-t}. 

\subsection{Proof of Theorem \ref{thm:tits-F}} 
For $\bs \in \bBS$, let $m(\bs) \in \breve \CT$ be the lifting of $\bs$ in Corollary \ref{cor:m(s)}. For any $\breve \t \in \Omega_{\bal}^{\s^*}$, let $m(\breve \t) \in \breve \CT$ be the lifting of $\breve \t$ constructed in \S \ref{constaubs}. Then $\s^*(m(\breve \t))=m(\breve \t)$ for all $\breve \t \in \Omega_{\bal}^{\s^*}$ and $\s^*(m(\bs))=m(\s^*(\bs))$ for all $\bs \in \bBS$. 

We set $\CT=\breve \CT^{\s^*}$.

For any $s \in \BS$, we have $s=\bw_{\CX}$ for some $\s$-orbit $\CX$ in $\breve \Delta$ with $\bW_{\CX}$ finite (see \S \ref{sec:WF}). Let $\bw_{\CX}=\bs_{i_1} \cdots \bs_{i_n}$ be a reduced expression of $\bw_{\CX}$ in $\bW_{\af}$. Then $\bw_{\CX}=\s(\bs_{i_1}) \cdots \s(\bs_{i_n})$ is again a reduced expression of $\bw_{\CX}$ in $\bW_{\af}$. We have \begin{align*} m(\bw_{\CX}) &=m(\bs_{i_1}) \cdots m(\bs_{i_n})=m(\s(\bs_{i_1})) \cdots m(\s(\bs_{i_n}))=\s(m(\bs_{i_1})) \cdots \s(m(\bs_{i_n})) \\ &=\s(m(\bw_{\CX})).
\end{align*}

In particular, $m(s)=m(\bw_{\CX}) \in \CT=\breve \CT^{\s^*}$. 

Let $w \in W_{\af}$ and $s_{i_1} \cdots s_{i_n}$ be a reduced expression of $w$ in $W$. We set $m(w)=m(s_{i_1}) \cdots m(s_{i_n})$. Then $m(w) \in \CT$. Suppose that $s'_{i_1} \cdots s'_{i_n}$ is another reduced expression of $w$ in $W$. By \S \ref{sec:WF} (a), $\breve \ell(w)=\breve \ell(s_{i_1})+\cdots+\breve \ell(s_{i_n})=\breve \ell(s'_{i_1})+\cdots+\breve \ell(s'_{i_n})$. Since $\{m(\bs)\;|\; \bs \in \bBS\}$ satisfies the Coxeter relations, by condition (2)(b) $^\dagger$ in \S \ref{sec:tits}, $m(s_{i_1}) \cdots m(s_{i_n})=m(s'_{i_1}) \cdots m(s'_{i_n})$. In other words, $m(w)$ is independent of the choice of reduced expression in $W$. Finally for $w \in W$, we have $w=w_1 \t$ for a unique $w_1 \in W_{\af}$ and $\t \in \Omega_{\al}=\Omega_{\bal}^{\s^*}$. We set $m(w)=m(w_1) m(\t)$. Then $m(w) \in \CT$. In other words, the map $\phi: \CT \to W$ is surjective. We have $$\ker(\phi)=\ker(\breve \phi) \cap \CT_{\af}=\breve S_2 \cap \CT_{\af}=S_2.$$ 

It remains to show that for each $a \in \Delta$, we have $m(s_a)^2 = b^\vee(-1) $ where $b$ is the gradient of $a$. By \cite[\S 5.1]{BT2}, we know that the elements of $\Delta$ and in bijection with $\sigma^*$-orbits $\CX$ of $\breve\Delta$ with  $\ba|_{\CA(A,F)}$ non-constant for $\ba \in \CX$. Let $\CF$ denote the $\BR$-vector space of affine linear functions on $\CA(A,F)$. Then $\CF$ may be identified  with the $\sigma^*$-invariants of $\breve\CF$ where $\breve\CF$ is the $\BR$-vector space of affine functions on $\CA(T,\bF)$ (which we have identified with $V = X_*(T) \otimes \BR$ after choosing a special point). Under this identification, we have for $a \in \Delta$,  \[ a \mapsto \frac{1}{|\CX|}\sum_{\ba \in \CX}  \ba,\] where $\CX$ is the $\sigma^*$-orbit on $\breve\Delta$ that corresponds to $a$.  

Fix a $\sigma^*$-invariant scalar product $\langle \cdot, \cdot \rangle$ on $V = X_*(T) \otimes \BR$ and we identify $V$ with $V^*$ via this inner product. We may extend this to a scalar product on $\breve\CF$ by setting $\langle \breve f, \breve g \rangle = \langle D\breve f, D\breve g \rangle$, where $D\breve f \in V$ is the gradient of $\breve f$. For any $\breve f\in \breve \CF$ with $D\breve f \neq 0$, let $\breve f^\vee = \frac{2\breve f}{\langle \breve f, \breve f \rangle}$. 

Then for $a \in \Delta$, we have $a^\vee = \frac{2a}{\langle a, a\rangle}$. Fix $\ba \in \breve \Delta$ whose restriction to $\CA(A, F)$ is the affine root $a$. Then 
\[\langle a, a \rangle = \frac{1}{|\CX|} \sum_{\ba' \in \CX} \langle \ba, \ba'\rangle.\]
This implies that
\begin{equation}\label{cadefn}
    a^\vee = c_a \sum_{\ba' \in \CX} \ba'^\vee, \;\; b^\vee = c_a \sum_{\ba' \in \CX} \bb'^\vee
\end{equation} where \[c_a = \frac{\langle \ba, \ba\rangle }{\sum_{\ba' \in \CX} \langle \ba, \ba'\rangle} = \frac{\langle \bb, \bb\rangle }{\sum_{\ba' \in \CX} \langle \bb, \bb'\rangle}.\] 

Now, let us prove that $m(s_a)^2 = b^\vee(-1)$.  We may easily reduce ourselves to the case where $G_\bF$ is $\bF$-simple and simply connected. Via a simple case-by-case analysis, each $\s^*$-orbit $\CX$ consists of simple roots in $\breve \D$ whose corresponding Dynkin diagram is either a product of $A_1$'s or is a single copy of $A_2$. 

In the former case, $c_a=1$ and by \eqref{cadefn},\[ m(s_a)^2 = m(s_{\ba_1})^2 m(s_{\ba_2}^2) \cdots m(s_{\ba_k})^2 = \bb_1^\vee(-1)\bb_2^\vee(-1)\cdots \bb_k^\vee(-1) = b^\vee(-1),\]
where $\CX = \{\ba_1, \ba_2, \cdots,  \ba_k\}$. 

In the latter case, $\CX = \{\ba_1, \ba_2\}$ and $\ba_1+\ba_2$ is an affine root. In this case, $c_a=2$. By \eqref{cadefn}, \[m(s_a)^2 = (m(s_{\ba_1})m(s_{\ba_2})m(s_\ba))^2 = 1 = b^\vee(-1).\]

Thus $\CT$ is a Tits group of $W$ and $\{m(w)\;|\; w \in W\}$ is a Tits cross-section of $W$ in $\CT$.

\end{document}